\newtheorem{tw}{Theorem}[section]
\newtheorem{lm}[tw]{Lemma}
\newtheorem{wn}[tw]{Corollary}
\newtheorem{pr}[tw]{Proposition}
\theoremstyle{definition}
\newtheorem{df}{Definition}[section]
\newtheorem{uw}{Remark}[section]
\newcommand{\oh}{\overline{h}}
\newcommand{\1}{\mathbbm{1}}
\newcommand{\xbm}{(X,{\cal B},\mu)}
\newcommand{\Z}{\mathbb{Z}}
\newcommand{\Q}{\mathbb{Q}}
\newcommand{\R}{\mathbb{R}}
\newcommand{\T}{\mathbb{T}}
\newcommand{\C}{\mathbb{C}}
\newcommand{\cT}{\mathcal{T}}
\newcommand{\cS}{\mathcal{S}}
\newcommand{\cP}{\mathcal{P}}
\newcommand{\N}{\mathbb{N}}
\newcommand{\ycn}{(Y,{\cal C},\nu)}
\newcommand{\vep}{\varepsilon}
\title{On the strong convolution singularity property}
\author{Joanna Ku\l{}aga-Przymus\footnote{Research partially supported by MNiSzW grant N N201 384834 and Marie Curie "Transfer of Knowledge" program, project MTKD-CT-2005-030042 (TODEQ).}\\{\small Faculty of Mathematics and Computer Science,}\\ {\small Nicolaus Copernicus University},\\{\small ul. Chopina 12/18, 87-100 Toru\'n, Poland}\\ {\small e-mail: joanna.kulaga@gmail.com}}
\begin{document}
\bibliographystyle{abbrv}
\maketitle
\setcounter{tocdepth}{4}

\begin{abstract}
We develop a new method for proving that a flow has the so-called strong convolution singularity property, i.e. the Gaussian system induced by its (reduced) maximal spectral type has simple spectrum. We use these methods to give examples of smooth flows on closed orientable surfaces of genus at least $2$ with a weaker property: each of their maximal spectral types $\sigma$ is such that the Gaussian system induced by $\sigma$ has simple spectrum on the so-called \nth{3} chaos (i.e. $V_\sigma^{\odot 3}$ has simple spectrum).
\end{abstract}

\footnotesize
\tableofcontents
\normalsize 

\section{Introduction}
Let $\mathcal{U}=(U_t)_{t\in\R}$ be a continuous unitary representation on a separable Hilbert space $H$. Each such representation is determined by two invariants: \emph{the (reduced) maximal spectral type} and the \emph{spectral multiplicity function}. The first of these invariants is the equivalence class $\sigma_\mathcal{U}$ of all spectral measures $\sigma_x$ with $x\in H$ which dominate all other spectral measures: $\sigma_y\ll \sigma_x$ for $y\in H$ (such dominating measures always exist and they are all mutually equivalent).\footnote{For the definition of a spectral measure see Section~\ref{se:spectralna}.} The spectral multiplicity function $M_\mathcal{U}\colon \R=\hat{\R}\to \N\cup\{\infty\}$ is measurable and defined $\sigma_\mathcal{U}$-almost everywhere.\footnote{We do not give a precise definition of the spectral multiplicity function as we will not use it later.} We say that $\mathcal{U}$ has simple spectrum when it is unitarily isomorphic to $\mathcal{V}_{\sigma_\mathcal{U}}=((V_{\sigma_\mathcal{U}})_t)_{t\in\R}\colon L^2(\R,\sigma_\mathcal{U})\to L^2(\R,\sigma_\mathcal{U})$ given by $(V_{\sigma_\mathcal{U}})_tf(x)=e^{2\pi i t x}f(x)$ (this is equivalent to $M_\mathcal{U}\equiv1$ $\sigma_\mathcal{U}$-almost everywhere).

We deal with unitary representations associated with measure-preserving flows. Given $\mathcal{T}=(T_t)_{t\in\mathbb{R}}$ acting on a standard probability Borel space $(X,\mathcal{B},\mu)$ one defines the so-called \emph{Koopman representation} $\mathcal{U}_{\cT}={(U_{T_t})}_{t\in\mathbb{R}}\colon L^2(X,\mathcal{B},\mu)\to L^2(X,\mathcal{B},\mu)$ by the formula $U_{T_t}(f)=f\circ T_t$ for all $t\in\R$. We often write $\sigma_{\cT}$ instead of $\sigma_{\mathcal{U}_{\cT}}$ and call it the (reduced) maximal spectral type of $\cT$. The properties of $\sigma_\cT$ may reflect important properties of the flow $\cT$. In case when $\sigma_\cT$ is a singular measure it is interesting to know what is the ``degree'' of its singularity. 

Let us recall two notions which will play here an important role.\footnote{Analogous properties can be easily formulated for $\Z$-actions or action of other groups.} We say that the flow $\cT$ has the property of the \emph{mutual singularity of convolution powers} if its maximal spectral type has this property, i.e. when $\sigma_\cT^{\ast n}\perp \sigma_{\cT}^{\ast m}$ for $n\neq m$. However, for the purposes of this paper a stronger property, so-called  \emph{strong convolution singularity property} (SCS\footnote{In~\cite{lemanczyk+parreau} this property was denoted by SC, however, following~\cite{lem-ency}, we prefer to write SCS instead. In this way we avoid confusion with the so-called CS properties defined in terms of singularity of the considered measure with respect to convolutions of continuous measures. For information on the relations between these notions see e.g.~\cite{1112.5545} and~\cite{MR2729082}.}) will be crucial. The flow $\cT$ is said to have the SCS property whenever $\sigma=\sigma_\cT$ is such that the unitary $\R$-representation $\bigoplus_{n\geq 1}V_\sigma^{\odot n}$ has simple spectrum. In other words the spectrum of $V_{\sigma}^{\odot n}$ for each $n\geq 1$ is simple and the maximal spectral types of $V_\sigma^{\odot n}$, i.e. $\sigma^{\ast n}$, for $n\geq 1$ are mutually singular: $\sigma^{\ast n}\perp \sigma^{\ast m}$ for $n\neq m$. It is a folklore result that the simplicity of spectra of all $V_\sigma^{\odot n}$ yields automatically the SCS property for $\sigma$ (for a proof see~\cite{1112.5545}). One way to check that $V_\sigma^{\odot n}$ has simple spectrum is to show that
\begin{multline}\label{eq:warunek1}
\mbox{$\sigma^{\ast n}$-almost all conditional measures in the disintegration of $\sigma^{\otimes n}$ over $\sigma^{\ast n}$}\\
\mbox{via the map $\R^n\ni (x_1,\dots,x_n)\mapsto x_1+\dots +x_n\in \R$}\\
\mbox{are purely atomic with $n!$ atoms.}
\end{multline}
The SCS property was shown to be generic for flows~\cite{lemanczyk+parreau}.\footnote{In case of automorphisms, see Ageev~\cite{MR1680995}.} While there is a variety of concrete examples of automorphisms enjoying the SCS property, including the Chacon automorphism \cite{MR1835446} and some mixing automorphisms \cite{MR2465595}, \cite{MR2354530}, much less is known in the case of flows. 

In a recent paper~\cite{lemanczyk+parreau} it was shown that some classes of smooth or regular flows on $\mathbb{T}^2$ enjoy the SCS property. The examples are given in terms of special flows $T^f=(T_t^f)_{t\in\mathbb{R}}$, where $T\colon \mathbb{T}\to\mathbb{T}$ is an irrational rotation $Tx=x+\alpha$ and $f\colon \mathbb{T}\to \mathbb{R}_+$ an integrable function (for a formal definition of special flow see Section~\ref{se:potokispecjalne}):
\begin{itemize}
\item[(A)]
for a fixed $f\in C^{\infty}$ different from any trigonometric polynomial for a ``generic'' $\alpha\in [0,1)$ the resulting flow $T^f$ has the SCS property,
\item[(B)]
for $f$ piecewise absolutely continuous, with the sum of jumps different from zero, and $\alpha$ with unbounded partial quotients the flow $T^f$ also has the SCS property.
\end{itemize}
Recall that the flows described in the case (A) are smooth reparametrizations of linear flows on the torus $\T^2$. The class (B) was introduced by von Neumann in~\cite{MR1503078} where it was shown there that these flows are weakly mixing. The methods developed in~\cite{lemanczyk+parreau} exploit the fact that for each flow of type (A) or (B) one can find $t_n\to \infty$ such that 
\begin{equation}\label{eq:warunek2}
U_{T_{t_n}}\to \int_\R U_{T_t}\ dP(t)
\end{equation}
in the weak topology, where $P$ is a Borel probability measure on $\R$. An additional feature of (A) and (B) is that the measure $P$ has always bounded support. Therefore, in order to show that~\eqref{eq:warunek1} holds, one investigates the properties of the Fourier transform of $P$ which is an analytic function on $\R$. In case (A) one deals with the first Bessel function which has an infinite number of zeros. In case (B) the Fourier transform is a finite combination of functions which are of the form $t\mapsto \frac{1}{t}e^{2\pi i t \Delta}$ for some $\Delta\in\R$. Then, using the location of zeros (case (A)) and the properties of ``polynomials'' (case (B)) one concludes that~\eqref{eq:warunek1} indeed holds. What is important is that in the weak closure there are several integral Markov operators: with measure $P$ and some of its ``rescalings'' $P_t$.\footnote{Symbol $P_t$ stands for the image $P$ via the map $x\mapsto tx$.} Indeed, in (A) these are rescalings $P_t$ for $t>0$ and in (B) for $t\in\Z$. 

The examples considered in the present paper are of different nature than the ones described in~\cite{lemanczyk+parreau}. We deal with some smooth flows on closed orientable surfaces of genus at least $2$. In their special flow representation the roof function is unbounded and smooth except for one singularity. Similar as in case (B), we obtain integral Markov operators given by some measure $P$ and its integer ``rescalings'' $P_m$ ($m\in\Z\setminus\{0\}$). The main difference between this case and the above-mentioned cases (A) and (B) is such that the measure $P$ and its ``rescalings'' $P_m$ ($m\in\Z\setminus\{0\}$) appearing here in the weak limits~\eqref{eq:warunek2} have unbounded supports. Moreover, we use the densities of the measures $P_m$ instead of looking at their Fourier transforms as it was done in case (B).\footnote{The measures $P_m$ in case (B) are also absolutely continuous.} Each of them vanishes on some half-line and their densities are smooth, except for one point where the right-hand side and left-hand side limits differ (one of them is equal to zero, the other one is infinite). We develop general tools which yield the SCS property under some additional assumptions on the form of the singularities and on the speed of the decay of the density functions. An important part of our argument is related to algebraic geometry. For each $n\geq 1$ we define an infinite system of equations consisting of some symmetric polynomials with coefficients determined by the density  of the measure $P$ (more precisely, by the speed of growth near the singularity point of the derivatives of the convolutions $P_{t_1}\ast\dots \ast P_{t_k}$, $t_1,\dots,t_k\in\mathbb{R}\setminus\{0\}$). The uniqueness of the solution of this system for some $n\geq 1$ implies the simplicity of spectrum for $V_\sigma^{\odot n}$, where $\sigma$ is the maximal spectral type of the flow under consideration. We apply these methods to show that in some class of smooth flows on closed orientable surfaces of genus at least $2$ the maximal spectral type $\sigma$ is such that $V_{\sigma}^{\odot 3}$ has simple spectrum.

\section{Definitions}

\subsection{Spectral theory}\label{se:spectralna}
Let $\mathcal{U}=(U_t)_{t\in\R}$ 
be a unitary $\R$-representation on a separable Hilbert space $H$.  Every such representation is determined by two invariants: the \emph{maximal spectral type} which is the equivalence class of a spectral measure $\sigma_x$ which dominates all spectral measures (for all $y\in H$ we have $\sigma_y\ll \sigma_x$) and the \emph{spectral multiplicity function} $M_\mathcal{U}\colon \hat{\mathbb{R}}=\R\to \N\cup \{\infty\}$. Recall that for $x\in H$ the measure $\sigma_x$ is the finite positive Borel measure on $\R$ whose Fourier transform $(\hat{\sigma}_x(t))_{t\in\R}$ is given by $\hat{\sigma}_x(t)=\langle U_{t}x,x\rangle$. Whenever $\mathcal{U}$ is isomorphic to $\mathcal{V}_{\sigma}=((V_{\sigma})_t)_{t\in\R}\colon L^2(\R,\sigma)\to L^2(\R,\sigma)$ given by $(V_{\sigma})_tf(x)=e^{2\pi i t x}f(x)$ for some measure $\sigma$, we say that $\mathcal{U}$ has \emph{simple spectrum}. In this case, the maximal spectral type of $\mathcal{U}$ is equal to $\sigma$.

Given a measure preserving flow $\cT=(T_t)_{t\in\R}$ on a standard probability Borel space $\xbm$ one defines a canonical unitary representation called the \emph{Koopman representation}: $\mathcal{U}=(U^\cT_t)_{t\in\R}\colon L^2\xbm \to L^2\xbm$
by 
$$
U^\cT_t(f)=f\circ T_t\text{ for }f\in L^2\xbm \text{ and }t\in\R.
$$
The spectral properties of $\mathcal{T}$ are the properties of the associated Koopman representation, e.g. the maximal spectral type of $\mathcal{T}$ is the maximal spectral type of $U_\mathcal{T}$. Since every Koopman representation has an atom at $0$, i.e. $\delta_0\ll \sigma_{U_\cT}$, we will use the notion of the \emph{reduced maximal spectral type}, i.e. $\sigma_{U_\cT|_{L^2_0(X)}}$, where $L^2_0(X)=\left\{f\in L^2(X)\colon \int_X f\ d\mu=0\right\}$.

We recall now some additional spectral properties.
\begin{df}\footnote{For $n=2$ this notion was introduced in~\cite{MR2729082}.}
Let $n\geq 1$. The measure $\sigma$ on $\R$ has the \emph{convolution singularity of order $n$} property (CS($n$)) whenever it is singular with respect to the convolution of any $n$ continuous measures on $\R$.
\end{df}
\begin{df}
The measure $\sigma$ on $\R$ has the \emph{strong convolution singularity property} whenever for each $n\geq 1$ the conditional measures in the decomposition
$$
\sigma^{\otimes n}=\int_{\R}\nu_{t}\ d\sigma^{\ast n}(t)
$$
are purely atomic and have $n!$ atoms.
\end{df}
Recall that the SCS property is equivalent to spectral simplicity of the Gaussian action associated to the reduced maximal spectral type of $U_\cT$. For more information concerning Gaussian systems we refer the reader e.g. to~\cite{MR0272042}.

\subsection{Joinings}
Let $\mathcal{T}$ and $\mathcal{S}$ be measure preserving flows on $\xbm$ and $\ycn$ respectively. By $J(\mathcal{T},\mathcal{S})$ we denote the set of all~\emph{joinings} between $\mathcal{T}$ and $\mathcal{S}$, i.e. the set of all $\mathcal{T}\times \mathcal{S}$-invariant probability measures on $(X\times Y,\mathcal{B}\otimes \mathcal{C})$, whose projections on $X$ and $Y$ are equal to $\mu$ and $\nu$ respectively. \emph{Ergodic joinings} are these joinings which are ergodic with respect to $\mathcal{T}\times \mathcal{S}$. Joinings are in one-to-one correspondence with Markov operators $\Phi\colon L^2(X,\mathcal{B},\mu)\to L^2(Y,\mathcal{C},\nu)$ satisfying the condition $\Phi\circ \mathcal{T}=\mathcal{S} \circ \Phi$. This correspondence is given by $\Phi \mapsto \lambda \in J(\cT,\cS),\ \lambda(A\times B)= \int_B \Phi(\1_A)\ d\nu$. The set of such Markov operators is endowed with the weak operator topology which allows us to view the set $J(\mathcal{T})$ of all self-joinings of $\cT$ (i.e. joinings of $\cT$ with itself) as a metrisable compact semitopological semigroup. We refer the reader to~\cite{MR1958753} for more information on the theory of joinings and e.g. to~\cite{MR1784644} for a short survey on the basic notions.

\subsection{Special flows}\label{se:potokispecjalne}

Let $T\colon (X,\mathcal{B},\mu)\to(X,\mathcal{B},\mu)$ be an ergodic automorphism of a standard probability space and let $f\in L^1 (X,\mathcal{B},\mu)$ be a strictly positive function. Let $X^f=\{(x,t)\in X\times\mathbb{R} \colon 0\leq t<f(x)\}$. Under the action of the \emph{special flow} $T^f$ each point of $X^f$ moves vertically upwards  at the unit speed and we identify the points $(x,0)$ and $(Tx,f(x))$. We put
\begin{displaymath}
f^{(m)}(x) = \left\{ \begin{array}{ll}
f(x)+f(Tx)+\ldots+f(T^{m-1}x) & \textrm{if $m>0$}\\
0 & \textrm{if $m=0$}\\
-(f(T^mx)+\ldots+f(T^{-1}x)) & \textrm{if $m<0$}.
\end{array} \right.
\end{displaymath}
For a formal definition of the special flow, consider the skew product $S_{-f}\colon (X\times \mathbb{R},\mu\otimes m)\to(X\times \mathbb{R},\mu\otimes m)$ given by
\begin{equation*}
S_{-f}(x,r)=(Tx,r-f(x))
\end{equation*}
and let $\Gamma^f$ stand for the quotient space $X\times\mathbb{R}/\sim$, where the relation $\sim$ identifies the points in each orbit of the action on $X\times\mathbb{R}$ by $S_{-f}$. Let $\sigma=(\sigma_t)_{t\in\mathbb{R}}$ denote the flow on $(X\times \mathbb{R},\mu\otimes m)$ given by
\begin{equation*}
\sigma_t(x,r)=(x,r+t).
\end{equation*}
Since $\sigma_t \circ S_{-f}=S_{-f}\circ \sigma_t$, we can consider the quotient flow of the action $\sigma$ by the relation $\sim$. This is the special flow over $T$ under $f$ denoted by $T^f$.

\subsection{Continued fractions}\label{se:cf}
For $\alpha\in (0,1)$ irrational we denote by $Tx=x+\alpha$ the corresponding irrational rotation on $\mathbb{T}$, where $\T$ is equipped with the $\sigma$-algebra of the Borel subsets and the Lebesgue measure inherited from $[0,1)$. Consider the continued fraction expansion of $\alpha$:
$$
\alpha = \cfrac{1}{a_1
          + \cfrac{1}{a_2
          + \cfrac{1}{\dots}}}.
$$
By $(q_n)_{n\geq 1}$ we denote the sequence of the denominators and by $(p_n)_{n\geq 1}$ the sequence of the numerators in the continued fraction expansion of $\alpha$:
\begin{align*}
p_0=0,\ p_1=1,\ p_{n+1}&=a_{n+1}p_n+p_{n-1},
\\
q_0=1,\ q_1=a_1,\ q_{n+1}&=a_{n+1}q_n+q_{n-1}.
\end{align*}

\section{Tools}\label{se:tools}
Lemańczyk and Parreau in~\cite{lemanczyk+parreau} proved a proposition which can be used for showing that some flows enjoy the SCS property. Before we state it, let us introduce the necessary notation. Denote by $\cP(\R)$ the space of probability Borel measures on $\R$ (endowed with the weak-$\ast$-topology). Let $CB(\R^n)$ stand for the space of continuous bounded functions on $\R^n$, let $C_n\colon \R^n\to \R$ be given by $C_n(x_1,\dots, x_n)=x_1+\dots+x_n$ and let $\mathcal{B}_{C_n}(\R^n)=C_n^{-1}(\mathcal{B}(\R))$, where $\mathcal{B}(\R)$ is the Borel $\sigma$-algebra on $\R$.

\begin{pr}\label{pr:3.1}
Let $\sigma\in\cP(\R)$ be continuous. Fix $n\geq 1$. Assume that $\mathcal{F}\subset CB(\R^n)$ (in particular $\mathcal{F}\subset L^2(\R^n,\sigma^{\otimes n})$) is a countable family $(\sigma^{\otimes n})$-a.e. measurable with respect to $\mathcal{B}_{C_n}(\R^n)$. Assume moreover that there exist $\widetilde{A}\subset \R^n$, $\sigma^{\otimes n}(\widetilde{A})=1$ and $B\subset \R$, $\sigma^{\ast n}(B)=1$ such that for each $c\in B$ if $(x_1,\dots, x_n), (x_1',\dots,x_n')\in C_n^{-1}(c)\cap \widetilde{A}$ and $J(x_1,\dots,x_n)=J(x_1',\dots,x_n')$ for each $J\in \mathcal{F}$ then $(x_1,\dots, x_n)=(x'_{\pi(1)},\dots,x'_{\pi(n)})$ for some permutation $\pi$ of $\{1,\dots,n\}$. Then for $\sigma^{\ast n}$-a.e. $c\in\R$ the conditional measure $\sigma_c^{(n)}$ is purely atomic concentrated on $n!$ atoms.
\end{pr}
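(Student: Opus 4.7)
My plan is to establish the regular disintegration $\sigma^{\otimes n}=\int_\R \sigma_c^{(n)}\,d\sigma^{\ast n}(c)$ along the map $C_n$ and then reduce everything to a careful fibrewise argument. The standard property of disintegrations immediately yields $\sigma_c^{(n)}(\widetilde{A})=1$ for $\sigma^{\ast n}$-a.e.\ $c$ (since $\sigma^{\otimes n}(\widetilde{A})=1$), and $S_n$-invariance of $\sigma^{\otimes n}$ together with uniqueness of the disintegration makes each $\sigma_c^{(n)}$ invariant under the coordinate action of $S_n$.

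The crux is to exploit the assumption that every $J\in\mathcal{F}$ is $\sigma^{\otimes n}$-a.e.\ measurable with respect to $\mathcal{B}_{C_n}(\R^n)=C_n^{-1}(\mathcal{B}(\R))$: for each $J$ there exists a Borel function $h_J\colon\R\to\R$ such that $J=h_J\circ C_n$ off a $\sigma^{\otimes n}$-null set $N_J$. Because $\mathcal{F}$ is countable, $N=\bigcup_{J\in\mathcal{F}}N_J$ is still null, and I would replace $\widetilde{A}$ by $\widetilde{A}\setminus N$. On this smaller set (still of full measure, and on which the separating hypothesis trivially continues to hold with the same $B$) every $J$ is literally equal to the fibre-constant $h_J\circ C_n$. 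Hence any two points of $(\widetilde{A}\setminus N)\cap C_n^{-1}(c)$ share all values of functions from $\mathcal{F}$, so the assumption forces them to be coordinate permutations of one another.

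It follows that for every $c\in B$ the whole set $(\widetilde{A}\setminus N)\cap C_n^{-1}(c)$ lies in a single $S_n$-orbit, which has at most $n!$ elements. Combined with $\sigma_c^{(n)}(\widetilde{A}\setminus N)=1$ for $\sigma^{\ast n}$-a.e.\ $c$, this already shows that $\sigma_c^{(n)}$ is purely atomic with at most $n!$ atoms. To upgrade this to exactly $n!$, I would further remove the big diagonal $D=\bigcup_{i<j}\{x_i=x_j\}$, which is $\sigma^{\otimes n}$-null by continuity of $\sigma$; for $\sigma^{\ast n}$-a.e.\ $c$ the supporting orbit then lies in $\R^n\setminus D$, so its points have pairwise distinct coordinates and the orbit has cardinality exactly $n!$.

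The only really delicate step I foresee is the null-set bookkeeping: one must check that the successive shrinkings of $\widetilde{A}$ preserve both $\sigma^{\otimes n}(\widetilde{A})=1$ and the separating hypothesis, and that the set of $c$'s for which $\sigma_c^{(n)}$ lives on the shrunken $\widetilde{A}$ and additionally lies in $B$ still has full $\sigma^{\ast n}$-measure. Beyond that, the proof is a direct unwinding of the definitions of disintegration and the $\mathcal{B}_{C_n}$-measurability hypothesis, with no deeper ingredient needed.
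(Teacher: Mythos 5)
The paper never proves this proposition --- it is quoted from Lema\'nczyk--Parreau --- so there is no internal argument to compare against; judged on its own, your proof is the natural one and its main steps are sound: disintegrate $\sigma^{\otimes n}$ over $\sigma^{\ast n}$ via $C_n$, use countability of $\mathcal{F}$ to replace every $J$ by an exactly fibre-constant function $h_J\circ C_n$ off one $\sigma^{\otimes n}$-null set, observe that then any two points of the (shrunken) full-measure set lying in the same fibre automatically agree on all of $\mathcal{F}$, so the separation hypothesis puts the fibre set inside a single $S_n$-orbit, and remove the big diagonal (null because $\sigma$ is continuous) so that the orbit has $n!$ distinct points. Your null-set bookkeeping (shrinking $\widetilde{A}$ only strengthens the antecedent of the hypothesis, and $\sigma_c^{(n)}$ still gives the shrunken set full mass for a.e.\ $c$) is fine.

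The one place where the write-up under-argues is the final upgrade from ``at most $n!$'' to ``exactly $n!$''. Knowing that the carrying orbit has $n!$ distinct points does not by itself exclude that $\sigma_c^{(n)}$ sits on a proper subset of that orbit. This is exactly where the $S_n$-invariance of $\sigma_c^{(n)}$ --- which you correctly establish at the outset via uniqueness of the disintegration, but never invoke again --- must be used: invariance forces all $n!$ points of the orbit to carry equal mass, and since $\sigma_c^{(n)}$ is a probability measure concentrated on (a subset of) this finite orbit, some point has positive mass, hence every point has mass exactly $1/n!$ and there are exactly $n!$ atoms. With that sentence inserted, your proof is complete.
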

In other words, given $\sigma\in\mathcal{P}(\mathbb{R})$ and $n\geq 1$, in order to show that $V_\sigma^{\odot n}$ has simple spectrum, one needs to develop methods for finding the countable family of functions which satisfies two conditions:
\begin{itemize}
\item[(i)]
the functions in this family are measurable with respect to the $\sigma$-algebra generated by the partition of $\mathbb{R}^n$ into the lines of the form $x_1+\dots+x_n=c$, $c\in\mathbb{R}$,
\item[(ii)]
the family distinguishes points from almost every line $x_1+\dots+x_n=c$.
\end{itemize}
\begin{uw}\label{uw:3-1}\cite{lemanczyk+parreau}
A countable family of tensors of the form $J=\hat{P}(\cdot)\otimes \dots\otimes \hat{P}(\cdot)$, where $P\in\cP(\R)$ is such that for some $t_n\to\infty$
\begin{equation}\label{eq:zbieznosc}
U_{T_{t_n}} \to \int U_{T_t}\ dP(t)\text{ in the weak operator topology}\footnote{The intergral in the right-hand side of the formula is defined weakly:
$$
\left\langle \int U_{T_t}\ dP(t) f ,g\right\rangle=\int \left\langle U_{T_t}f,g\right\rangle\ dP(t).
$$}
\end{equation}
satisfies the measurability condition denoted above as~(i), i.e. there exists a measurable function $F\colon \R\to \mathbb{C}$ such that
$$
\hat{P}(x_1)\cdot\ldots\cdot \hat{P}(x_n)=F(x_1+\dots+x_n)\text{ for }\sigma^{\otimes n}\text{-almost every }(x_1,\dots,x_n).
$$
\end{uw}
To find such measures $P$ satisfying condition~\eqref{eq:zbieznosc}, we will use the following result.
\begin{pr}\cite{FL04}\label{pr:l+p}
Let $T$ be an ergodic automorphism of a standard probability Borel space $(X,\mathcal{B},\mu)$ and let $(q_n)_{n\in\N}$ be a rigidity sequence of $T$. Suppose that $f\in L^2(X,\mathcal{B},\mu)$ is a positive function with $\int_X f\ d\mu=1$. Let $f_0=f-\int f\ d\mu$. Moreover, suppose that the sequence $\left(f_0^{(q_n)}\right)_{n\in\N}$ is bounded in $L^2\left(X,\mathcal{B},\mu\right)$, $\left(f_0^{(q_n)}\right)_\ast(\mu)$ converges weakly to $P$ and there exists $c>0$ such that $f^{(k)}(x)\geq ck$ for a.a. $x\in X$ and for all $k\in\N$ large enough. Then
\begin{equation}\label{eq:warunek}
U_{T^f_{q_n}}\to \int U_{T^f_{-t}}\ dP(t).
\end{equation}
\end{pr}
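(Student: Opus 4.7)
The plan is to verify the weak operator convergence
\[
\langle U_{T^f_{q_n}} g, h\rangle \longrightarrow \int_{\R} \langle U_{T^f_{-t}} g, h\rangle\, dP(t)
\]
for $g, h$ in a dense family of $L^2(X^f)$; since both sides are controlled by $\|g\|_2 \|h\|_2$, density is enough. I would take $g, h$ continuous and supported in $X \times [\alpha, \beta]$ with $[\alpha, \beta] \subset (0,\infty)$ chosen so that for small $|t|$ the operator $U_{T^f_{-t}}$ acts as pure vertical translation $(x, s) \mapsto (x, s-t)$ without wrap-around through the roof. Setting $\Phi(x, t) := \int g(x, s-t)\overline{h(x, s)}\, ds$, the right-hand side simplifies to $\int_X \int_{\R} \Phi(x, t)\, dP(t)\, d\mu(x)$.

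Next I would analyze $T^f_{q_n}(x, s)$. Using the lower bound $f^{(k)} \geq ck$ (which controls the number of base crossings of the flow in time $q_n$) together with the tightness of $(f_0^{(q_n)})$ coming from its $L^2$-boundedness, I would show that on a set $A_n \subset X^f$ of measure tending to $1$ one has the clean identity
\[
T^f_{q_n}(x, s) = (T^{q_n}x,\ s - f_0^{(q_n)}(x)).
\]
The rigidity hypothesis $T^{q_n} \to \mathrm{Id}$ in measure, combined with the continuity of $g$, then yields $g(T^f_{q_n}(x, s)) \approx g(x, s - f_0^{(q_n)}(x))$ in $L^2$. A Fubini step thereby reduces the left-hand side to $\int_X \Phi(x, f_0^{(q_n)}(x))\, d\mu(x) + o(1)$.

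It therefore remains to show
\[
\int_X \Phi(x, f_0^{(q_n)}(x))\, d\mu(x) \longrightarrow \int_X \int_{\R} \Phi(x, t)\, dP(t)\, d\mu(x).
\]
I would reduce by density to tensor-product test functions $g = \phi \otimes \chi$, $h = \psi \otimes \eta$, so that $\Phi$ factorises as $\phi(x)\overline{\psi(x)}\Psi(t)$ with $\Psi$ bounded continuous; the target then becomes the asymptotic independence
\[
\int \phi\overline{\psi}\cdot\Psi(f_0^{(q_n)})\, d\mu \longrightarrow \int \phi\overline{\psi}\, d\mu \cdot \int \Psi\, dP.
\]

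This asymptotic independence is the main obstacle: it is a \emph{joint} distributional statement about $(x, f_0^{(q_n)}(x))$ that goes beyond the marginal convergence $(f_0^{(q_n)})_*\mu \to P$ supplied by the hypothesis. It must be extracted by exploiting the rigidity of $T$ along $(q_n)$, using $T$-invariance of $\mu$ (to substitute $x \mapsto T^{q_n}x$ inside the integral), the cocycle identity $f_0^{(q_n)}\circ T^{q_n} = f_0^{(2q_n)} - f_0^{(q_n)}$, and the tightness of $(f_0^{(q_n)})$, in order to decouple the $x$-dependent factor $\phi\overline{\psi}$ from the distributional limit of $f_0^{(q_n)}$. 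Once this key step is established, polarisation and standard density arguments extend the weak operator convergence to all of $L^2(X^f)$.
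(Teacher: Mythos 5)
The paper does not prove this proposition at all (it is quoted from [FL04]), so your argument has to stand on its own, and as it stands its central step is missing rather than proved. You correctly isolate the key point, namely that one needs the \emph{joint} convergence $(\mathrm{id}\times f_0^{(q_n)})_\ast\mu \to \mu\otimes P$ on $X\times\R$ and not just the marginal convergence $(f_0^{(q_n)})_\ast\mu\to P$; but you then declare this ``the main obstacle'' and only list tools you would try. Moreover the tools you list are not the ones that make it work: the substitution $x\mapsto T^{q_n}x$ together with the cocycle identity $f_0^{(q_n)}\circ T^{q_n}=f_0^{(2q_n)}-f_0^{(q_n)}$ gives you nothing, since no hypothesis controls the distribution of $f_0^{(2q_n)}$, and you never invoke ergodicity of $T$, which is precisely the hypothesis that forces the decoupling. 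The mechanism that does work is: rigidity gives $f_0^{(q_n)}\circ T-f_0^{(q_n)}=f_0\circ T^{q_n}-f_0\to 0$ in $L^2$, so the measures $\rho_n=(\mathrm{id}\times f_0^{(q_n)})_\ast\mu$ are asymptotically $(T\times\mathrm{id})$-invariant; by $L^2$-boundedness they are tight, any weak limit $\rho$ is $(T\times\mathrm{id})$-invariant with marginals $\mu$ and $P$, and disintegrating $\rho$ over the second coordinate one writes $\mu=\int_\R\rho_t\,dP(t)$ with each $\rho_t$ $T$-invariant, whence $\rho_t=\mu$ for $P$-a.e.\ $t$ by ergodicity (extremality of $\mu$), i.e.\ $\rho=\mu\otimes P$. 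Without an argument of this kind your proof does not close.

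There is a second, more concrete error in your reduction: the ``clean identity'' $T^f_{q_n}(x,s)=(T^{q_n}x,\ s-f_0^{(q_n)}(x))$ does \emph{not} hold on sets of measure tending to $1$ in the generality of the statement. It requires $0\le s-f_0^{(q_n)}(x)<f(T^{q_n}x)$, and since $(f_0^{(q_n)})_\ast\mu\to P$ with $P$ in general not concentrated near $0$ (in this paper's application $P=\nu$ even has unbounded support), the set where wrap-around through the roof occurs has measure bounded away from zero. The correct identity is $T^f_{q_n}(x,s)=T^f_{-f_0^{(q_n)}(x)}(T^{q_n}x,s)$, valid off the set $\{s\ge f(T^{q_n}x)\}$, which is small only because $f\circ T^{q_n}\to f$ in measure by rigidity. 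For the same reason your simplification of the right-hand side to pure vertical translations for test functions supported in $X\times[\alpha,\beta]$ is not legitimate: $P$ charges large $t$, so one must keep $\Phi(x,t)=\int_0^{f(x)}g\bigl(T^f_{-t}(x,s)\bigr)\overline{h(x,s)}\,ds$, with the genuine flow (including wrap-around) in the integrand, on both sides of the asserted convergence. These two defects — the unproved decoupling and the false no-wrap-around reduction — mean the proposal is an outline of the right general shape but not a proof.
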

Therefore in order to apply Proposition~\ref{pr:3.1}, we will need to:
\begin{itemize}
\item
find the limit distribution of $f^{(q_n)}_0$,
\item
check that the family of tensors of the form $\hat{P}(\cdot)\otimes\dots\otimes \hat{P}(\cdot)$ satisfies condition (ii) introduced above, before Remark~\ref{uw:3-1}.
\end{itemize}
It is clear that the more measures $P$ satisfying condition~\eqref{eq:warunek} we obtain, the easier it should be to make sure that condition (ii) holds. In our case condition~\eqref{eq:warunek} will be satisfied not only by some measure $P$, but also by its ``integer rescalings'', i.e. by the measures $P_m=(M_m)_\ast(P)$ for $m\in\Z$, where
$$
M_t\colon\R \to \R  \text{ is given by }M_tx=tx \text{ for }t\in\R.
$$
Therefore the functions
$$
(t_1,\dots,t_n)\mapsto \hat{P}_m(t_1)\cdot\ldots\cdot\hat{P}_m(t_n)
$$
will be measurable with respect to $\sigma$-algebra $\mathcal{B}_{C_n}(\R^n)$. Since for $m\in\Z$ we have
$$
\hat{P}_t(m)=\hat{P}_m(t)
$$
and the product of Fourier transforms of measures is the Fourier transform of their convolution, the functions
\begin{equation}\label{eq:rodzina}
(t_1,\dots,t_n)\mapsto (P_{t_1}\ast\dots\ast P_{t_n})\hat{}(m)
\end{equation}
will be measurable $\sigma^{\otimes n}$-almost everywhere with respect to $\sigma$-algebra $\mathcal{B}_{C_n}(\R^n)$. Since the integer Fourier coefficients of a measure on $\R$ determine its image via the function $x\mapsto x\!\!\mod 1$, therefore the function
\begin{equation}\label{eq:mojafunction}
(t_1,\dots,t_n)\mapsto (\cdot \!\!\!\!\mod 1)_\ast(P_{t_1}\ast\dots\ast P_{t_n})
\end{equation}
will be also measurable $\sigma^{\otimes n}$-almost everywhere with respect to $\sigma$-algebra $\mathcal{B}_{C_n}(\R_n)$. We will use the density of measure $(\cdot \!\!\!\!\mod 1)_\ast(P_{t_1}\ast\dots\ast P_{t_n})$ to prove that the family of functions given by~\eqref{eq:rodzina} satisfies condition (ii).
\section{Smooth flows on surfaces}\label{se:potoki}
The flows which we consider in this paper are some smooth flows on closed orientable surfaces of genus at least $2$, having saddle-connections, i.e. some orbits beginning and ending in a saddle point. We will use their representation as special flows over an irrational rotation under the so-called symmetric logarithm roof function. More precisely, we consider a special flow over an irrational rotations on the circle $Tx=x+\alpha\ (\!\!\!\!\mod 1)$, $\alpha\in (0,1)\cap \R\setminus \Q$ under a roof function of the form $f+f_1+c\colon [0,1)\to \R$, where
$$
f(x)=-\ln(x)-\ln(1-x)-2,
$$
$f_1\colon\T\to \R$ is an absolutely continuous function with zero average and $c\in\R$ is such that $f+f_1+c>0$. We denote this flow by $\mathcal{T}=(T_t)_{t\in\R}$. Such flows were considered by Blokhin~\cite{MR0370656} who provided a construction on each closed orientable surface of genus at least $2$ yielding such special representation. In order to prove the results concerning the SCS property, we restrict this class by requiring that the rotation number $\alpha$ of the base transformation is a sufficiently well approximable irrational, precisely speaking
$$
\lim_{k\to\infty}q_{n_k}^3\|q_{n_k}\alpha\|=0
$$
for some subsequence $(q_{n_k})$ of the sequence of denominators $(q_n)$ of $\alpha$ in its continued fraction expansion.

\section{Results}\label{se:wyniki}

\subsection{New tools - the main proposition}\label{se:wyniki1}
We will describe now a general method of showing that for some measures $\sigma$ the unitary flow $V_\sigma^{\odot n}$ has simple spectrum. The tools which we will use were introduced in Section~\ref{se:tools}. Before we state the main proposition of this section, we need one more definition.

\begin{df}\label{df:szybkispadek}
We say that function $F\colon (-\infty,0)\cup(0,\infty)\to \R$ enjoys property $\mathcal{W}$, whenever there exist $A>1$, $t\geq 0$ and $r\in(-1,0)$ such that the following conditions hold:
\begin{itemize}
\item
$|F(x)|<Ae^{-\frac{|x|}{A}}$ for $|x|>t$\ (condition $\mathcal{W}_1$),
\item
$|F(x)|<A|x|^r$ for $0<|x|<t$\ (condition $\mathcal{W}_2$).
\end{itemize}
\end{df}

\begin{pr}\label{tw:nowametoda} 
Let  $\cT=(T_t)_{t\in\R}$ be a weakly mixing flow with the maximal spectral type $\sigma$. Let the measure $P\in \cP(\R)$ be absolutely continuous, such that for any $m\in\Z\setminus\{0\}$ there exists a sequence $t_n\to\infty$ such that
$$
U_{T_{t_n}}\to \int_{\R} U_{T_t}\ d(M_m)_{\ast}(P)(t)
$$
and there exists $a\in\R$ such that the density $h$ of the measure $P$ satisfies the following conditions:
\begin{itemize}
\item[(i)]
$h|_{(-\infty,a)}\equiv 0$ (or $h_{(a,\infty)}\equiv 0$),
\item[(ii)]
function $h|_{(a,\infty)}$ (or $h|_{(-\infty,a)}$) is smooth, i.e. belongs to the $C^{\infty}$ class,
\item[(iii)]
in the right-hand side (or left-hand side) neighbourhood of point $a$ the function $h$ is of the form $h(x)=(x-a)^{-1/2}\cdot\widetilde{h}(x)$, where $\widetilde{h}$ is analytic at point $a$, we have $h(x)=\sum_{n=0}^{\infty}a_n (x-a)^n$ for $x$ sufficiently close to point $a$, additionally requiring that $a_0\neq 0$,\footnote{It follows from this assumption that for every $k\geq 1$ and every $t_1,\dots,t_{2k}>0$ the function $h_{t_1}\ast \dots \ast h_{t_{2k}}$ is analytic at $(t_1+\dots+ t_{2k})\cdot a$, see Proposition~\ref{pr:dwie}, page~\pageref{pr:dwie}.}
\item[(iv)]
for any $n\in\N$ the function $x\mapsto (x-a)^n \cdot\frac{d^n}{dx^n}h(x-a)$ has property $\mathcal{W}$.
\end{itemize}
Let $d\geq 1$. Assume that on a set of full measure $\sigma^{\otimes d}$ the values of the function
\begin{equation}\label{eq:ff}
\sum_{k_1+\dots+k_d=n}\prod_{i=1}^{d}\frac{a_{k_i}\Gamma(k_i+1/2)}{t_i^{k_i}}
\end{equation}
of variables $t_1,\dots,t_d$ determine the values of these variables (up to the permutation of the variables) under the assumption that $t_1+\dots +t_d=c$ for $\sigma^{\ast d}$-a.a. $c\in\R$. Then the unitary flow $V_{\sigma}^{\odot d}$ has simple spectrum.
\end{pr}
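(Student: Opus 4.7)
The plan is to apply Proposition~\ref{pr:3.1} with $n=d$, using the countable family
$$
\mathcal{F} = \{\,J_m : m\in\Z\setminus\{0\}\,\},\qquad J_m(t_1,\dots,t_d) := \hat{P}_m(t_1)\cdot\ldots\cdot \hat{P}_m(t_d).
$$
By the standing assumption that each $(M_m)_\ast P = P_m$ arises as a weak-operator-topology limit of some $U_{T_{t_n}}$, Remark~\ref{uw:3-1} guarantees that every $J_m$ is $\sigma^{\otimes d}$-a.e.\ measurable with respect to $\mathcal{B}_{C_d}(\R^d)$. Using $\hat{P}_m(t_i) = \hat{P}_{t_i}(m)$, we have $J_m(t_1,\dots,t_d) = \widehat{P_{t_1}\ast\cdots\ast P_{t_d}}(m)$, so the family $\mathcal{F}$ records, for $\sigma^{\otimes d}$-a.e.\ $(t_1,\dots,t_d)$, all integer Fourier coefficients of the convolved measure $P_{t_1}\ast\cdots\ast P_{t_d}$; equivalently, it determines the folded measure $(\cdot\!\!\mod 1)_\ast(P_{t_1}\ast\cdots\ast P_{t_d})$, as in~\eqref{eq:mojafunction}. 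The task reduces, by Proposition~\ref{pr:3.1}, to showing that this information distinguishes $(t_1,\dots,t_d)$ from all other $d$-tuples with the same sum, up to a permutation.

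Next I would extract from the folded density the Puiseux expansion of the unfolded density $h_{t_1}\ast\cdots\ast h_{t_d}$ at its distinguished point $(t_1+\dots+t_d)a$. The scaling $h_{t_i}(x) = |t_i|^{-1}h(x/t_i)$ together with~(iii) yields, near the appropriate one-sided edge of the support of $h_{t_i}$, a Puiseux series
$$
h_{t_i}(x) = \sum_{k\geq 0} \frac{a_k}{|t_i|^{k+1/2}} \bigl(\vep_i(x - t_i a)\bigr)^{k-1/2},
$$
with $\vep_i$ a sign compensating for the orientation of the support. Property $\mathcal{W}$ from~(iv), combined with~(i)--(ii), guarantees that the non-singular tails of $h_{t_i}$ decay fast enough that differentiating termwise under a convolution integral is legitimate and that the contribution of far-away portions of $h_{t_i}$ to the behaviour near the edge is smooth. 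Iterated application of the Beta identity $\int_0^1 u^{\alpha-1}(1-u)^{\beta-1}\,du = \Gamma(\alpha)\Gamma(\beta)/\Gamma(\alpha+\beta)$ then shows that the coefficient of $(x - (t_1+\dots+t_d)a)^{n + d/2 - 1}$ in $h_{t_1}\ast\cdots\ast h_{t_d}$ equals, up to a global gamma factor depending only on $d$ and $n$, the expression~\eqref{eq:ff}. Since near $(t_1+\dots+t_d)a \!\!\mod 1$ the folded density differs from the unfolded convolution near $(t_1+\dots+t_d)a$ by an analytic function (all other summands in the fold are smooth at that point), every coefficient~\eqref{eq:ff} is determined by $\mathcal{F}$.

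Combining these two steps with the hypothesis that on a set of full $\sigma^{\otimes d}$-measure the sequence of values~\eqref{eq:ff} (indexed by $n\geq 0$) determines $(t_1,\dots,t_d)$ up to a permutation on $\sigma^{\ast d}$-a.e.\ fibre $t_1+\dots+t_d = c$, we obtain exactly the separation hypothesis of Proposition~\ref{pr:3.1}. The conclusion of that proposition gives that $\sigma^{\ast d}$-a.e.\ conditional measure in the disintegration of $\sigma^{\otimes d}$ over $\sigma^{\ast d}$ via $C_d$ is purely atomic and concentrated on $d!$ atoms, which is equivalent to simplicity of the spectrum of $V_{\sigma}^{\odot d}$.

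The main obstacle is the middle step: the explicit asymptotic computation identifying the Puiseux coefficients of the $d$-fold convolution at the edge with~\eqref{eq:ff}. It requires an inductive convolution argument in which one carefully tracks the gamma factors produced by the Beta integrals and controls remainders after truncating the Puiseux series, using both $\mathcal{W}_1$ (tail decay of $h^{(n)}$) and $\mathcal{W}_2$ (mild growth near the singularity). A secondary technical point is a clean treatment of tuples $(t_1,\dots,t_d)$ of mixed signs, where $h_{t_1}\ast\cdots\ast h_{t_d}$ may possess several candidate ``edges''; one must argue that the coefficient extraction at $(t_1+\dots+t_d)a$ still produces~\eqref{eq:ff}.
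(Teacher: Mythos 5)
Your overall strategy coincides with the paper's: take the countable family $\hat{P}_m(\cdot)\otimes\dots\otimes\hat{P}_m(\cdot)$, $m\in\Z\setminus\{0\}$, get $\mathcal{B}_{C_d}$-measurability from Remark~\ref{uw:3-1}, observe that these values are exactly the integer Fourier coefficients of $P_{t_1}\ast\dots\ast P_{t_d}$ and hence determine $(\cdot \bmod 1)_{\ast}(P_{t_1}\ast\dots\ast P_{t_d})$, recover from this folded measure the quantities~\eqref{eq:ff}, and conclude via Proposition~\ref{pr:3.1} together with the separation hypothesis. However, the middle step, which you defer, is where essentially all of the content of Section~\ref{se:details} lies, and as described it would fail for tuples of mixed signs. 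If some $t_i>0$ and some $t_j<0$, the convolution $h_{t_1}\ast\dots\ast h_{t_d}$ is supported on a two-sided neighbourhood of $(t_1+\dots+t_d)\cdot a$: there is no ``edge'' there at all, hence no Puiseux coefficient of $(x-(t_1+\dots+t_d)a)^{n+d/2-1}$ in your sense, and speaking of ``several candidate edges'' does not repair this. The paper's mechanism is different: after translating to $a=0$, one groups the $t_i$ by sign; by the parity assumption one group has even cardinality, so its convolution is analytic at zero (Corollary~\ref{wn:9}) while the other carries the $x^{-1/2}$-type singularity; the numbers are then extracted not from a one-sided series but from the antisymmetric part $\lim_{x\to 0^+}\bigl(\frac{d^k}{dx^k}W(x)-\frac{d^k}{dx^k}W(-x)\bigr)$ of the folded function, using the boundary terms in the derivative formula for convolutions of functions supported on complementary half-lines (Proposition~\ref{stw:deriv}, Proposition~\ref{pr:nawijanie2}). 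Moreover this procedure only determines the pair of sequences $\{(A_n),(-A_n)\}$ (Corollary~\ref{wn:222}), and the hypothesis $a_0\neq 0$ is needed precisely to resolve that sign and pass to~\eqref{eq:ff} (Corollary~\ref{wn:wielomianytu}); your sketch never confronts this ambiguity.

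A second concrete gap is your claim that the folded density differs from the unfolded convolution near the distinguished point by a smooth function ``by property $\mathcal{W}$''. Assumption (iv) gives property $\mathcal{W}$ only for $h$ itself, whereas the smoothness of the tail sum $\sum_{k\neq 0}(h_{t_1}\ast\dots\ast h_{t_d})(x+k)$ (Lemma~\ref{lm:pomo}) requires sufficiently fast decay of \emph{all derivatives of the full convolution}, and likewise of the same-sign sub-convolutions that enter Proposition~\ref{pr:nawijanie2}. Establishing this is a genuine piece of work: the paper proves that property $\mathcal{W}$ propagates under convolution, including the mixed-support case (Lemma~\ref{lm:lemat5}, Proposition~\ref{lm:wlasnoscPdlasplotu}, Remark~\ref{ponim}, Proposition~\ref{lm:3.33}), and only then converts it into ``sufficiently fast'' decay via Lemma~\ref{stw:jjj}. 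Without these two ingredients --- the jump-extraction argument in the mixed-sign case with its sign resolution, and the $\mathcal{W}$-propagation for the convolutions --- your outline does not yet constitute a proof, although the surrounding framework (measurability, reduction to distinguishing points on fibres, and the final appeal to Proposition~\ref{pr:3.1}) is exactly right.
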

To keep the structure of the remainder of this section clear, we include now an outline of the proof of the above proposition.

\begin{proof}[Outline of the proof]
To prove the above theorem, it suffices to show that for $(t_1,\dots,t_d)$ belonging to a set of full measure $\sigma^{\otimes d}$, for $\sigma^{\ast d}$-a.e. $c\in\R$ the measure $(\cdot \text{ mod }1)_{\ast}(P_{t_1}\ast \dots \ast P_{t_d})$ determines the set $\{t_1,\dots,t_d\}$ when $t_1+\dots + t_d=c$. To this end we show that for $\sigma^{\ast d}$-a.e. $c\in\R$ the measure  $(\cdot \text{ mod }1)_{\ast}(P_{t_1}\ast \dots \ast P_{t_d})$ determines the values of expressions of the form~\eqref{eq:ff}, whenever $t_1+\dots +t_d=c$. 

The main tool in the proof are analytic functions. We investigate the series expansion of the density $h_{t_1}\ast \dots \ast h_{t_d}$ of the measure $P_{t_1}\ast \dots \ast P_{t_d}$. The analysis is relatively easy when $t_1,\dots,t_d>0$. The main difficulty here lies in the fact that what we can use is not function $h_{t_1}\ast \dots \ast h_{t_d}$ itself, but its image via the map $x \mapsto x \text{ mod }1$. Using Proposition~\ref{lm:wlasnoscPdlasplotu} we conclude that the function $h_{t_1}\ast \dots \ast h_{t_d}$ enjoys a similar property like the one in assumption \emph{(iv)} of Proposition~\ref{tw:nowametoda}. It follows that the function $(\cdot \text{ mod } 1)_{\ast}(h_{t_1}\ast \dots \ast h_{t_d})$ carries the full information about the series expansion around $(t_1+\dots +t_d)\cdot a$ of the function $h_{t_1}\ast \dots \ast h_{t_d}$ (see Section~\ref{p343}). The coefficients in this expansion let us find the values of the expressions given by~\eqref{eq:ff}.

We will now describe the difference between the procedure described above and the procedure which we apply in the general case when the sign of the numbers $t_1,\dots,t_d$ is not known. Since we have assumed that $d\neq 1$ is odd, the cardinalities of the sets
\begin{equation*}
\left\{s \in \{t_1,\dots,t_d\}\colon s>0 \right\}\text{ and }\left\{s \in \{t_1,\dots,t_d\}\colon s<0 \right\}
\end{equation*}
are of different parity. We consider first the convolutions $h_{s_1}\ast \dots \ast h_{s_k}$, where $\{s_1,\dots,s_k\}\subset \{t_1,\dots,t_d\}$ are the maximal subsets such that the numbers $s_1,\dots,s_k$ have the same sign and then the convolution $h_{t_1}\ast \dots \ast h_{t_d}$. It turns out that also in this case the function $(\cdot \text{ mod }1)_{\ast }(h_{t_1}\ast \dots \ast h_{t_d})$ carries the full information about the values of the expressions given by~\eqref{eq:ff} (see Section~\ref{p343}). To prove that it is indeed true, we use the fact that by Proposition~\ref{lm:wlasnoscPdlasplotu}, Remark~\ref{ponim} and Proposition~\ref{lm:3.33}, all derivatives of  $h_{t_1}\ast \dots \ast h_{t_d}$ have analogous property to the one in the assumption~\emph{(iv)} of Proposition~\ref{tw:nowametoda}.

To end the proof, we use Proposition~\ref{pr:l+p}.
\end{proof}

\subsection{New tools - technical details}\label{se:details}
This section includes technical details concerning convolutions of functions from a certain class, their derivatives and the coefficients appearing in their series expansions. The proof of Proposition~\ref{tw:nowametoda} is included in Section~\ref{se:techni}.

\subsubsection{Convolutions and derivatives}\label{subse:5.2.1}
We will deal with functions which for some $\overline{x}\in\R$ vanish on one of the intervals $(-\infty,\overline{x})$ or $(\overline{x},\infty)$. In such a situation we will write $F\colon (\overline{x},\infty)\to \R$ or $F \colon (-\infty,\overline{x})\to \R$ respectively. According to this notation the convolution of functions $F_1\in L^1(x_1,\infty), F_2\in L^1(x_2,\infty)$ is given by the formula
$$
F_1\ast F_2 (x)=\int_\R F_1(y)F_2(x-y)\ dy=\int_{x_1}^{x+x_2} F_1(y) F_2(x-y)\ dy,
$$
whereas the convolution of $F_1\in L^1(x_1,\infty)$, $F_2\in L^1(-\infty, x_2)$ is given by
\begin{multline*} 
F_1\ast F_2 (x)=\int_\R F_1(y)F_2(x-y)\ dy=\\
=
\left\{
\begin{array}{rl} 
\int_{x-x_2}^\infty F_1(y)F_2(x-y)\ dy & \text{for } x\geq x_1+x_2\\
\int_{x_1}^\infty F_1(y)F_2(x-y)\ dy & \text{for } x<x_1+x_2.  
\end{array} 
\right.
\end{multline*}
We will treat the convolution of functions $F_1\in L^1(x_1,\infty)$ and $F_2\in L^1(-\infty,x_2)$ as a function whose domain is the set $(-\infty,x_1+x_2)\cup(x_1+x_2,\infty)$.
We will use the above formulas mainly for $x_1=x_2=0$. For $F_1\in L^1(0,\infty), F_2\in L^1(0,\infty)$ we have
$$F_1\ast F_2(x)=\int_0^x F_1(y)F_2(x-y)\ dy,$$
whereas for $F_1\in L^1(0,\infty), F_2\in L^1(-\infty,0)$
\begin{equation*} 
F_1\ast F_2 (x)=
\left\{
\begin{array}{rl} 
\int_{x}^\infty F_1(y)F_2(x-y)\ dy & \text{for } x\geq 0 \\
\int_{0}^\infty F_1(y)F_2(x-y)\ dy & \text{for } x<0.
\end{array} 
\right.
\end{equation*}

We will say that a ffunction $F\colon (0,\infty)\to \R$ is \emph{analytic at zero}, whenever it can be extended to a function which is analytic at zero, i.e. whenever there exists $\vep>0$ and an analytic function $\widetilde{F}\colon (-\varepsilon,\varepsilon)\to\R$ such that $\widetilde{F}|_{(0,\varepsilon)}=F|_{(0,\varepsilon)}$. We will use similar terminology for $F\colon (-\infty,0)\to\R$.

\paragraph{Derivatives of convolutions - part I}

\begin{lm}\label{lm:pochodne}
Let $a>1$ and let the functions $F_1,F_2\colon (0,\infty)\to\R$ be such that: 
\begin{itemize}
\item
$F_1\in L^1(0,\infty)$,
\item
for any $c>0$ the function $F_1$ is uniformly continuous on the interval $[c,\infty)$, 
\item
$F_2$ is differentiable, 
\item
for any $c>0$ the function $F_2'$ is uniformly continuous on the interval $[c,\infty)$.  
\end{itemize}
Then for $x>0$
\begin{equation*}
\frac{d}{dx}\int_{0}^{\frac{x}{a}}F_1(y)F_2(x-y)\ dy=\int_0^{\frac{x}{a}}F_1(y) \frac{d}{dx}F_2(x-y)\ dy +\frac{1}{a}F_1\left(\frac{x}{a}\right)F_2\left(x-\frac{x}{a}\right).
\end{equation*}
\end{lm}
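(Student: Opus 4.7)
The plan is to prove the identity directly from the definition of the derivative as a limit of difference quotients, splitting the increment of the integral into a boundary contribution (from the variable upper limit) and an interior contribution (from the $x$-dependence of the integrand). For $|h|$ small enough that $x/(2a)<(x\pm|h|)/a$ and $(a-1)x/a-|h|>(a-1)x/(2a)$, I would rewrite
\begin{align*}
&\tfrac{1}{h}\left[\int_0^{(x+h)/a}F_1(y)F_2(x+h-y)\,dy-\int_0^{x/a}F_1(y)F_2(x-y)\,dy\right]\\
&\quad=\tfrac{1}{h}\int_{x/a}^{(x+h)/a}F_1(y)F_2(x+h-y)\,dy+\int_0^{x/a}F_1(y)\cdot\tfrac{F_2(x+h-y)-F_2(x-y)}{h}\,dy,
\end{align*}
and then analyse the two terms separately as $h\to 0$.

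For the boundary term, the interval of integration (of length $|h|/a$) lies inside $[x/(2a),\infty)$, on which $F_1$ is uniformly continuous; hence $F_1(y)=F_1(x/a)+o(1)$ uniformly on this shrinking interval as $h\to 0$. Since $F_2$ is continuous at $x-x/a$ (being differentiable) and $x+h-y\to x-x/a$ uniformly for $y$ in that interval, also $F_2(x+h-y)=F_2(x-x/a)+o(1)$ uniformly. Dividing by $h$ and using that the interval has length $|h|/a$, the boundary term converges to $\frac{1}{a}F_1(x/a)F_2(x-x/a)$.

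For the interior term, the mean value theorem gives
$$
\frac{F_2(x+h-y)-F_2(x-y)}{h}=F_2'\!\left(x-y+\theta_{h,y}h\right)
$$
for some $\theta_{h,y}\in(0,1)$. As $y$ ranges over $[0,x/a]$ and $|h|$ is small, the argument $x-y+\theta_{h,y}h$ stays in a fixed compact subset of $[(a-1)x/(2a),\infty)$, where $F_2'$ is continuous by uniform continuity, hence bounded by some constant $M$. Thus the integrand is dominated by $M\,|F_1(y)|$, which is integrable by assumption on $F_1$, and pointwise the difference quotient converges to $F_2'(x-y)$ by continuity of $F_2'$. Dominated convergence then yields the limit $\int_0^{x/a}F_1(y)F_2'(x-y)\,dy$.

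Summing the two limits gives exactly the right-hand side of the claimed identity. The main technical obstacle is simply the bookkeeping: verifying, for $h$ of both signs, that the arguments of $F_1$ and $F_2'$ which appear remain within the half-lines on which the uniform-continuity hypotheses are in force, so that both the "uniform on a shrinking interval" argument and the domination in the interior term are legitimate. Once this is handled, everything reduces to the standard Leibniz rule.
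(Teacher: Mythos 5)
Your proposal is correct and follows essentially the same route as the paper's proof: the same splitting of the difference quotient into the boundary term over $[x/a,(x+h)/a]$ (treated via continuity of $F_1$ and $F_2$ near $x/a$ and $x-x/a$) and the interior term handled by the mean value theorem. The only cosmetic difference is that you pass to the limit in the interior term by dominated convergence, while the paper uses the uniform continuity of $F_2'$ to get a uniform $\varepsilon$-estimate before integrating against $|F_1|$; both are valid and interchangeable here.
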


\begin{proof}
Fix $x>0$ and $a>1$ and notice that the integral $\int_{0}^{\frac{x}{a}}F_1(y)F_2(x-y)\ dy$ is finite. Indeed, by the assumption the function $F_2'$ is uniformly continuous on the interval $[x-\frac{x}{a},x]$, hence $F_2$ is bounded in this interval. Therefore and by the integrability of $F_1$ it follows that the considered integral is indeed finite.

Whenever $h>0$ is sufficiently small then for some $\theta=\theta(h)\in (0,h)$, using the triangle inequality and the mean value theorem we obtain
\begin{align*}
W_x(h):=
&\left| \frac{1}{h}\left(\int_{0}^{\frac{x+h}{a}}F_1(y)F_2(x+h-y)\ dy- \int_{0}^{\frac{x}{a}}F_1(y)F_2(x-y)\ dy \right)\right.\\
&\left.-\int_{0}^{\frac{x}{a}}F_1(y) F_2'(x-y)\ dy-\frac{1}{a}F_1\left(\frac{x}{a} \right)F_2\left(x-\frac{x}{a} \right) \right|\\
\leq& \int_{0}^{\frac{x}{a}}|F_1(y)|\left|\frac{F_2(x+h-y)-F_2(x-y)}{h}-F_2'(x-y)\right|\ dy\\
&+\int_{\frac{x}{a}}^{\frac{x+h}{a}}\frac{1}{h}\left|F_1(y)F_2(x+h-y)-F_1\left(\frac{x}{a}\right)F_2\left(x-\frac{x}{a} \right)\right|\ dy\\
=&\int_{0}^{\frac{x}{a}}|F_1(y)|\left|F_2'(x-y+\theta)-F_2'(x-y) \right|\ dy\\
&+\int_{\frac{x}{a}}^{\frac{x+h}{a}}\frac{1}{h}\left|F_1(y)F_2(x+h-y)-F_1\left(\frac{x}{a}\right)F_2\left(x-\frac{x}{a} \right)\right|\ dy.
\end{align*}
We may assume that $h>0$ is small enough, so that $x-\frac{x}{a}-h>0$. By the uniform continuity of the function $F_2'$ on the interval $\left[x-\frac{x}{a}-h,x \right]$, for $y\in \left[ 0,\frac{x}{a}\right]$ we obtain
\begin{equation*}
\left|F_2'(x-y+\theta)-F_2'(x-y) \right|<\varepsilon.
\end{equation*}
Therefore by the integrability of the function $F_1$
\begin{equation*}
\int_{0}^{\frac{x}{a}}|F_1(y)|\left|F_2'(x-y+\theta)-F_2'(x-y) \right|\ dy<\varepsilon
\end{equation*}
for $h>0$ small enough. By uniform continuity of the functions $F_1$ and $F_2$ on the intervals of the from $[c,\infty)$ for any $c>0$, for $y\in \left[\frac{x}{a},\frac{x+h}{a}\right]$ and $h>0$ small enough we obtain
\begin{equation*}
\left|F_1(y)F_2(x+h-y)-F_1\left(\frac{x}{a}\right)F_2\left(x-\frac{x}{a} \right)\right|<\varepsilon.
\end{equation*}
Hence
\begin{equation*}
\int_{\frac{x}{a}}^{\frac{x+h}{a}}\frac{1}{h}\left|F_1(y)F_2(x+h-y)-F_1\left(\frac{x}{a}\right)F_2\left(x-\frac{x}{a} \right)\right|\ dy <\varepsilon.
\end{equation*}
Therefore $\lim_{h\to 0^+}W_x(h)=0$. We treat the case where $h<0$ in a similar way and obtain $\lim_{h\to 0^-}W_x(h)=0$,
which ends the proof. 
\end{proof}

\begin{uw}\label{sto}
If the functions $F_1,F_2\colon (0,\infty)\to\R$ fulfill the assumptions of Lemma~\ref{lm:pochodne} and $z_0>0$, then the function
\begin{equation*}
x\mapsto \int_0^{x-z_0}F_1(y)F_2(x-y)\ dy
\end{equation*}
is differentiable on the interval $(z_0,\infty)$ and for $x>z_0$ we obtain
\begin{multline*}
\frac{d}{dx}\int_0^{x-z_0}F_1(y)F_2(x-y)\ dy\\
=\int_0^{x-z_0} F_1(y)\frac{d}{dx}F_2(x-y)\ dy+ F_1(x-z_0)F_2(z_0).
\end{multline*}
In the same way, the function
\begin{equation*}
z\mapsto \int_0^{z_0} F_1(y)F_2(x-y)\ dy
\end{equation*}
is differentiable on the interval $(z_0,\infty)$ and for $x>z_0$ we have
\begin{equation*}
\frac{d}{dx}\int_0^{z_0}F_1(y)F_2(x-y)\ dy= \int_0^{z_0}F_1(y)\frac{d}{dx}F_2(x-y)\ dy.
\end{equation*}
\end{uw}

\begin{lm}\label{wn:duzopochodnych}
Let $a>1$, $k\geq 1$ and let the functions $F_1,F_2\colon(0,\infty) \to \R$ be such that:
\begin{itemize}
\item
$F_1 \in L^1(0,\infty)$, 
\item
$F_1$ is differentiable $k-1$ times,
\item
for $0\leq l\leq k-1$ and for any $c>0$ the function $\frac{d^l}{dx^l}F_1$ is uniformly continuous on the interval $[c,\infty)$,
\item
$F_2$ is differentiable $k$ times,
\item
for $0\leq l\leq k$ and for any $c>0$ the function $\frac{d^l}{dx^l}F_2$ is uniformly continuous on the interval $[c,\infty)$.
\end{itemize}
Then
\begin{multline*}
\frac{d^k}{dx^k}\int_{0}^{\frac{x}{a}}F_1(y)F_2(x-y)\ dy\\
=\int_{0}^{\frac{x}{a}}F_1(y)\frac{d^k}{dx^k}F_2(x-y)\ dy+\sum_{l=0}^{k-1}w_l \frac{d^l}{dx^l}F_1\left(\frac{x}{a} \right)\frac{d^{k-l-1}}{dx^{k-l-1}}F_2\left( x-\frac{x}{a}\right)
\end{multline*}
for some $w_l\in \R$, $0\leq l\leq k-1$, depending on $k$ and $a$.
\end{lm}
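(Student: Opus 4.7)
The plan is to prove Lemma~\ref{wn:duzopochodnych} by induction on $k$, using Lemma~\ref{lm:pochodne} both as the base case and as the tool driving the inductive step.

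For the base case $k=1$, the formula is exactly the conclusion of Lemma~\ref{lm:pochodne} with $w_0=1/a$, so nothing new is needed.

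For the inductive step, assume the identity holds for some $k\geq 1$; we differentiate both sides once more. On the right-hand side there are two parts. The first is the integral $\int_{0}^{x/a} F_1(y)\,\tfrac{d^k}{dx^k}F_2(x-y)\,dy$. Since the hypotheses for $k+1$ guarantee that $F_1$ and $\tfrac{d^k}{dx^k}F_2$ together satisfy the hypotheses of Lemma~\ref{lm:pochodne} (integrability of $F_1$, uniform continuity of $F_1$ on each $[c,\infty)$, differentiability of $\tfrac{d^k}{dx^k}F_2$, and uniform continuity of $\tfrac{d^{k+1}}{dx^{k+1}}F_2$ on each $[c,\infty)$), we may differentiate under the integral sign at the cost of a boundary term:
\begin{equation*}
\frac{d}{dx}\int_{0}^{x/a}F_1(y)\tfrac{d^k}{dx^k}F_2(x-y)\,dy=\int_{0}^{x/a}F_1(y)\tfrac{d^{k+1}}{dx^{k+1}}F_2(x-y)\,dy+\tfrac{1}{a}F_1\!\left(\tfrac{x}{a}\right)\tfrac{d^k}{dx^k}F_2\!\left(x-\tfrac{x}{a}\right).
\end{equation*}
The second part is the finite sum $\sum_{l=0}^{k-1} w_l\,\tfrac{d^l}{dx^l}F_1(x/a)\,\tfrac{d^{k-l-1}}{dx^{k-l-1}}F_2(x-x/a)$; each summand is an ordinary product of two $C^1$ functions of $x$, so the product rule together with the chain rule (noting that $\tfrac{d}{dx}(x/a)=1/a$ and $\tfrac{d}{dx}(x-x/a)=1-1/a$) produces for each $l$ two terms of the shape
\begin{equation*}
w_l\cdot\tfrac{1}{a}\,\tfrac{d^{l+1}}{dx^{l+1}}F_1(x/a)\tfrac{d^{k-l-1}}{dx^{k-l-1}}F_2(x-x/a)\quad\text{and}\quad w_l\cdot(1-\tfrac{1}{a})\,\tfrac{d^l}{dx^l}F_1(x/a)\tfrac{d^{k-l}}{dx^{k-l}}F_2(x-x/a).
\end{equation*}

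Collecting all the boundary contributions and reindexing, every resulting term has the form $c\cdot\tfrac{d^{l'}}{dx^{l'}}F_1(x/a)\tfrac{d^{k-l'}}{dx^{k-l'}}F_2(x-x/a)$ for some $0\leq l'\leq k$ and some constant $c=c(k,a)\in\R$, which is exactly the form required at level $k+1$. Grouping these constants into new coefficients $w'_0,\dots,w'_k$ completes the inductive step.

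The whole argument is essentially bookkeeping; the one place where genuine care is needed is to make sure the hypotheses of Lemma~\ref{lm:pochodne} are available at each step. This is the main (and really only) obstacle: one must verify that the assumption ``$\tfrac{d^l}{dx^l}F_i$ is uniformly continuous on $[c,\infty)$ for all relevant $l$ and all $c>0$'' is robust under the operation of passing from $F_2$ to $\tfrac{d^k}{dx^k}F_2$ in the application of Lemma~\ref{lm:pochodne}, which it is by the explicit hypotheses of the present lemma. The precise values of the $w_l$ can be tracked through a recursion of the form $w'_{l'}=\tfrac{1}{a}w_{l'-1}+(1-\tfrac{1}{a})w_{l'}$ (with the appropriate conventions at the endpoints $l'=0,k$, and initial contribution $\tfrac{1}{a}$ from differentiating the integral), but the statement only requires the existence of such constants, so this calculation is not needed for the proof.
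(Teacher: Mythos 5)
Your proposal is correct and follows essentially the same route as the paper, whose proof of this lemma is literally ``use Lemma~\ref{lm:pochodne} and induction''; your write-up just makes the bookkeeping (application of Lemma~\ref{lm:pochodne} to $F_1$ and $\frac{d^k}{dx^k}F_2$, product/chain rule on the boundary terms, and the verification that the hypotheses propagate) explicit.
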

\begin{proof}
It suffices to use Lemma~\ref{lm:pochodne} and the mathematical induction.
\end{proof}

\begin{uw}\label{sto1}
Under assumptions as in Lemma~\ref{wn:duzopochodnych} one can show that given $z_0>0$ the function
\begin{equation*}
x\mapsto \int_0^{x-z_0}F_1(y)F_2(x-y)\ dy
\end{equation*}
is $k$-times differentiable in the interval $(z_0,\infty)$ and for $x>z_0$ the following equality holds:
\begin{multline*}
\frac{d^k}{dx^k}\int_0^{x-z_0}F_1(y)F_2(x-y)\ dy \\
= \int_0^{x-z_0}F_1(y)\frac{d^k}{dx^k} F_2(x-y)\ dy+\sum_{l=0}^{k-1}\frac{d^l}{dx^l}F_1(x-z_0)\frac{d^{k-l-1}}{dx^{k-l-1}}F_2(z_0).
\end{multline*}
Moreover, the function
\begin{equation*}
x\mapsto \int_0^{z_0}F_1(y)F_2(x-y)\ dy
\end{equation*}
is $k$-time differentiable in the interval $(z_0,\infty)$ and for $x>z_0$ the following equality holds:
\begin{equation*}
\frac{d^k}{dx^k}\int_{0}^{z_0}F_1(y)F_2(x-y)\ dy=\int_{0}^{z_0}F_1(y)\frac{d^k}{dx^k}F_2(x-y)\ dy.
\end{equation*}
\end{uw}

\begin{pr}\label{uw:uw2}
Let $k\geq 1$ and let the functions $F_1,F_2\colon (0,\infty)\to \R$ be such that:
\begin{itemize}
\item
$F_1,F_2\in L^1(0,\infty)$,
\item
for any $c>0$ the functions $\frac{d^l}{dx^l}F_i$ are uniformly continuous on the interval $[c,\infty)$ for $i=1,2$ for $0\leq l\leq k$.
\end{itemize}
Then for $x>0$
\begin{multline}\label{eq:wzorpoch}
\frac{d^k}{dx^k}(F_1\ast F_2)(x)=\int_0^{\frac{x}{2}} F_1(y)\frac{d^k}{dx^k} F_2(x-y)\ dy\\
+\int_0^{\frac{x}{2}} F_2(y)\frac{d^k}{dx^k} F_1(x-y)\ dy+\sum_{l=0}^{k-1} \overline{w}_l \frac{d^l}{dx^l}F_1\left(\frac{x}{2} \right) \frac{d^{k-1-l}}{dx^{k-1-l}}F_2\left(\frac{x}{2} \right)
\end{multline}
for some $\overline{w}_l\in\R$, $0\leq l\leq k-1$. Moreover, for $z_0>0$ and $x\in (z_0,\infty)$ we have the following formula
\begin{multline}\label{eq:wzorpoch2}
\frac{d^k}{dx^k}(F_1\ast F_2)(x)=\int_0^{z_0} F_1(y)\frac{d^k}{dx^k} F_2(x-y)\ dy\\
+\int_0^{x-z_0} F_2(y)\frac{d^k}{dx^k} F_1(x-y)\ dy\\
+\sum_{l=0}^{k-1} \frac{d^l}{dx^l}F_1\left(z_0\right) \frac{d^{k-1-l}}{dx^{k-1-l}}F_2\left(x-z_0 \right).
\end{multline}
\end{pr}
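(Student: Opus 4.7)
The plan is to reduce both formulas to direct applications of Lemma~\ref{wn:duzopochodnych} and Remark~\ref{sto1} by splitting the convolution integral at the appropriate point and making a change of variable. The key observation is that $F_1\ast F_2$ is defined on $(0,\infty)$ by $\int_0^x F_1(y)F_2(x-y)\,dy$, and this integral can be cut at any point in $(0,x)$.

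For the first formula~\eqref{eq:wzorpoch}, I would split at $y=x/2$ and in the right half perform the substitution $z=x-y$, obtaining
\[
(F_1\ast F_2)(x)=\int_0^{x/2}F_1(y)F_2(x-y)\,dy+\int_0^{x/2}F_2(z)F_1(x-z)\,dz.
\]
Now both pieces are in the form $\int_0^{x/a}G_1(y)G_2(x-y)\,dy$ with $a=2$, so Lemma~\ref{wn:duzopochodnych} applies directly (the uniform-continuity hypotheses for both $F_1,F_2$ and their derivatives up to order $k$ on intervals $[c,\infty)$ are exactly those assumed here). Differentiating $k$ times term by term produces the two integral terms of~\eqref{eq:wzorpoch} together with boundary sums
\[
\sum_{l=0}^{k-1}w_l\,\frac{d^l}{dx^l}F_1(x/2)\,\frac{d^{k-l-1}}{dx^{k-l-1}}F_2(x/2)
\ +\ \sum_{l=0}^{k-1}w_l\,\frac{d^l}{dx^l}F_2(x/2)\,\frac{d^{k-l-1}}{dx^{k-l-1}}F_1(x/2).
\]
Re-indexing the second sum via $l\mapsto k-1-l$ rewrites it with $F_1$ appearing first and with coefficient $w_{k-1-l}$, after which both sums merge into a single sum with coefficients $\overline{w}_l:=w_l+w_{k-1-l}$, yielding~\eqref{eq:wzorpoch}.

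For the second formula~\eqref{eq:wzorpoch2}, I would split instead at the fixed point $y=z_0$:
\[
(F_1\ast F_2)(x)=\int_0^{z_0}F_1(y)F_2(x-y)\,dy+\int_{z_0}^{x}F_1(y)F_2(x-y)\,dy,
\]
and in the second integral substitute $z=x-y$ to get $\int_0^{x-z_0}F_2(z)F_1(x-z)\,dz$. The first piece is handled by the second identity of Remark~\ref{sto1} (which produces no boundary term since the upper limit $z_0$ is constant in $x$); the second piece is handled by the first identity of Remark~\ref{sto1} applied to the pair $(F_2,F_1)$, which yields the integral term together with the boundary sum
\[
\sum_{l=0}^{k-1}\frac{d^l}{dx^l}F_2(x-z_0)\,\frac{d^{k-l-1}}{dx^{k-l-1}}F_1(z_0).
\]
Re-indexing $l\mapsto k-1-l$ turns this into the sum written in~\eqref{eq:wzorpoch2}.

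The main (mild) obstacle is bookkeeping: verifying that the two symmetric applications in the first formula produce coefficients that genuinely combine into a single $\overline{w}_l$, and that the change of variable $z=x-y$ preserves the hypotheses of Lemma~\ref{wn:duzopochodnych} / Remark~\ref{sto1}, which it does because uniform continuity of $\frac{d^l}{dx^l}F_i$ on $[c,\infty)$ for every $c>0$ is symmetric in $F_1,F_2$ and is unaffected by the reflection in the integrand. No new analytic input beyond the results already proved in this subsection is required.
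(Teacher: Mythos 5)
Your proposal is correct and takes essentially the same route as the paper, whose proof consists of the single remark that it suffices to apply Lemma~\ref{wn:duzopochodnych} and Remark~\ref{sto1}; your splitting of the convolution at $x/2$ (resp.\ at $z_0$) followed by the substitution $z=x-y$ and the re-indexing $l\mapsto k-1-l$ is precisely the intended way of invoking those two results.
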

\begin{proof}
It suffices to apply Lemma~\ref{wn:duzopochodnych} and Remark~\ref{sto1}.
\end{proof}

\begin{lm}
Let $F_1,F_2\in L^1(0,\infty)$. If
\begin{equation*}
\lim_{x\to \infty}F_1(x)=\lim_{x\to\infty}F_2(x)=0,
\end{equation*}
then $\lim_{x\to\infty} (F_1\ast F_2)(x)=0$. Moreover, if the functions $F_1$ and $F_2$ are $k$ times differentiable for some $k\geq 1$ and
\begin{equation*}
\lim_{x\to \infty}\frac{d^l}{dx^l}F_1(x)=\lim_{x\to\infty}\frac{d^l}{dx^l}F_2(x)=0
\end{equation*}
for $0\leq l\leq k$, then
\begin{equation*}
\lim_{x\to \infty}\frac{d^k}{dx^k} F_1 \ast F_2(x) =0.
\end{equation*}
\end{lm}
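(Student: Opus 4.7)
The plan is to split the convolution at the midpoint $x/2$ so that in each half one of the two functions is evaluated at a large argument (where it is small by hypothesis) while the other is controlled by its $L^1$-norm. Concretely, for the first assertion write
$$
(F_1\ast F_2)(x) \;=\; \int_0^{x/2} F_1(y)F_2(x-y)\,dy + \int_{x/2}^{x} F_1(y)F_2(x-y)\,dy,
$$
and in the second integral substitute $z = x-y$ to obtain
$$
(F_1\ast F_2)(x) \;=\; \int_0^{x/2} F_1(y)F_2(x-y)\,dy + \int_0^{x/2} F_2(z)F_1(x-z)\,dz.
$$
Given $\varepsilon>0$, pick $M$ so that $|F_i(s)|<\varepsilon$ for $s>M$, $i=1,2$. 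Then for $x>2M$ and $y\in[0,x/2]$ we have $x-y\ge x/2>M$, so each integrand is bounded by $\varepsilon|F_i(y)|$ and
$$
|(F_1\ast F_2)(x)| \;\le\; \varepsilon\bigl(\|F_1\|_1+\|F_2\|_1\bigr),
$$
which proves the first claim.

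For the differentiated version I would apply formula~\eqref{eq:wzorpoch} of Proposition~\ref{uw:uw2}:
$$
\frac{d^k}{dx^k}(F_1\ast F_2)(x) = \int_0^{x/2} F_1(y)\frac{d^k}{dx^k}F_2(x-y)\,dy + \int_0^{x/2} F_2(y)\frac{d^k}{dx^k}F_1(x-y)\,dy + \sum_{l=0}^{k-1}\overline{w}_l\,\frac{d^l}{dx^l}F_1\!\left(\tfrac{x}{2}\right)\frac{d^{k-1-l}}{dx^{k-1-l}}F_2\!\left(\tfrac{x}{2}\right).
$$
Each of the two integrals is handled exactly as in the previous paragraph, with $F_i$ replaced by $\frac{d^k}{dx^k}F_i$: since $x-y\ge x/2\to\infty$ uniformly in $y\in[0,x/2]$, the factor $\frac{d^k}{dx^k}F_i(x-y)$ becomes arbitrarily small by the hypothesis on the $k$-th derivative, while the companion function contributes only its $L^1$-norm. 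The boundary sum is a finite linear combination of products $\frac{d^l}{dx^l}F_1(x/2)\cdot\frac{d^{k-1-l}}{dx^{k-1-l}}F_2(x/2)$ with $0\le l\le k-1<k$, and by hypothesis every such factor tends to $0$ as $x\to\infty$, so the entire sum vanishes in the limit.

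The only subtle point is that the formula~\eqref{eq:wzorpoch} presupposes the uniform-continuity-on-$[c,\infty)$ conditions of Proposition~\ref{uw:uw2}; these are not stated explicitly in the present lemma, but in the intended application all derivatives in sight are continuous and tend to zero at infinity, so they are automatically bounded and uniformly continuous on every half-line $[c,\infty)$. I do not expect any genuine obstacle: the entire argument is a standard tail-splitting estimate, and the only thing to be careful about is to invoke~\eqref{eq:wzorpoch} in a context where its hypotheses are met.
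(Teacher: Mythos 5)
Your proposal is correct and follows essentially the same route as the paper: the paper's proof consists precisely of invoking formula~\eqref{eq:wzorpoch} from Proposition~\ref{uw:uw2} and performing the midpoint-splitting tail estimate that you spell out (your $k=0$ case is just that formula without boundary terms). Your closing remark about the uniform-continuity hypotheses being satisfied in the intended (smooth) applications matches how the paper implicitly uses the lemma, so nothing further is needed.
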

\begin{proof}
If suffices to use formula~\eqref{eq:wzorpoch} from Proposition~\ref{uw:uw2}.
\end{proof}

\paragraph{Derivatives of convolutions - part II}

We will now deal with the problem of calculating the derivative of a convolution of two functions which vanish of two complementary halflines.
\begin{pr}\label{stw:deriv}
Let $k\geq 1$, $F_1\in L^1(0,\infty)$, $F_2\in L^1(-\infty,0)$. Assume that:
\begin{itemize}
\item
the functions $F_1$ and $F_2$ are $k$ times differentiable, 
\item
for all $c>0$ and $0\leq l\leq k$ the function $\frac{d^l}{dx^l}F_1$ is uniformly continuous on the interval $[c,\infty)$. 
\item
the function $F_2$ is analytic at zero, 
\item
for all $c>0$ and $0\leq l\leq k$ the function $\frac{d^l}{dx^l}F_2$ is uniformly continuous on the interval $(-\infty,-c]$.\footnote{Under these assumptions all the derivatives of the function $F_2$, as functions which are analytic at zero and uniformly continuous on the interval $(-\infty,-c]$ for any $c>0$, are uniformly continuous on the interval $(-\infty,0]$.\label{bbb}}
\end{itemize}
Then the function $F_1\ast F_2$ is also $k$ times differentiable in the set $(-\infty,0)\cup(0,\infty)$ and the following formulas hold:
\begin{equation*}
\frac{d^k}{dx^k} F_1\ast F_2 (x)=\int_x^{\infty}F_1(y)\frac{d^k}{dx^k}F_2(x-y)\ dy - \sum_{l=0}^{k-1}\frac{d^{k-l-1}}{dx^{k-l-1}}F_1(x) \frac{d^l}{dx^l}F_2(0)
\end{equation*}
for $x>0$ and
\begin{equation*}
\frac{d^k}{dx^k} F_1\ast F_2 (x)=\int_0^{\infty}F_1(y)\frac{d^k}{dx^k}F_2(x-y)\ dy.
\end{equation*}
for $x<0$.
\end{pr}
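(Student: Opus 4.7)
The plan is to handle the two open half-lines $\{x<0\}$ and $\{x>0\}$ separately, since the convolution has different integral representations on them, and in each case to argue by induction on $k$.

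For $x<0$, we have $F_1\ast F_2(x)=\int_0^{\infty}F_1(y)F_2(x-y)\,dy$ with fixed limits, and for any $y>0$ the argument $x-y$ is negative. By the footnote accompanying the last hypothesis, each derivative $F_2^{(l)}$ for $0\leq l\leq k$ is uniformly continuous on the whole of $(-\infty,0]$ and in particular bounded there. I would apply dominated convergence with the integrable majorant $\|F_2^{(l)}\|_\infty\cdot|F_1(y)|$ to push $\frac{d}{dx}$ through the integral $k$ times in succession, directly obtaining the second claimed formula.

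For $x>0$, the representation $F_1\ast F_2(x)=\int_x^{\infty}F_1(y)F_2(x-y)\,dy$ has an $x$-dependent lower limit, and I would proceed by induction on $k$. The base case $k=1$ comes from a Leibniz-rule computation analogous to Lemma~\ref{lm:pochodne} (with the roles of the moving and fixed endpoints interchanged): the boundary contribution at $y=x$ yields $-F_1(x)F_2(0)$ and the interior contribution is $\int_x^{\infty}F_1(y)F_2'(x-y)\,dy$. For the inductive step, assuming the formula at level $k$, differentiating $\int_x^{\infty}F_1(y)F_2^{(k)}(x-y)\,dy$ by Leibniz yields
\[
\int_x^{\infty}F_1(y)F_2^{(k+1)}(x-y)\,dy \;-\; F_1(x)F_2^{(k)}(0),
\]
while differentiating the boundary sum $-\sum_{l=0}^{k-1}F_1^{(k-l-1)}(x)F_2^{(l)}(0)$ produces $-\sum_{l=0}^{k-1}F_1^{(k-l)}(x)F_2^{(l)}(0)$. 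Absorbing the new term $-F_1(x)F_2^{(k)}(0)$ as the $l=k$ summand collapses everything into $-\sum_{l=0}^{k}F_1^{(k-l)}(x)F_2^{(l)}(0)$, which is precisely the formula at level $k+1$.

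The main obstacle I expect is the rigorous justification of the Leibniz rule at each inductive stage in the case $x>0$. One needs to verify that the difference quotient of $\int_x^{\infty}F_1(y)F_2^{(k)}(x-y)\,dy$ converges and cleanly isolates the boundary value $F_1(x)F_2^{(k)}(0)$ with the correct sign. I would mimic the splitting used in the proof of Lemma~\ref{lm:pochodne}: the quotient breaks into a contribution on a fixed tail, controlled by the uniform continuity of $F_2^{(k+1)}$ on $(-\infty,-c]$ combined with $F_1\in L^1(0,\infty)$, and a contribution on a shrinking moving interval, controlled by the uniform continuity of $F_1$ on $[c,\infty)$ and of $F_2^{(k)}$ near $0$. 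The extension of uniform continuity of $F_2^{(l)}$ to all of $(-\infty,0]$ granted by the footnote to the last hypothesis is precisely what makes the boundary values $F_2^{(l)}(0)$ well-defined and ensures that they depend continuously on $x$ through the endpoint structure, closing the induction.
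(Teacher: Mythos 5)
Your proposal is correct and takes essentially the same route as the paper: the paper likewise proves the case $k=1$, $x>0$ by the three-term difference-quotient estimate you describe (splitting off the tail, the shrinking interval near $y=x$, and the boundary value $F_1(x)F_2(0)$, exactly in the spirit of Lemma~\ref{lm:pochodne}), handles $x<0$ by the same method, and obtains the higher-order formulas by induction, with the new term $-F_1(x)F_2^{(k)}(0)$ absorbed into the sum just as you indicate. One small caution: uniform continuity of $F_2^{(l)}$ on $(-\infty,0]$ does not by itself imply boundedness, so for your dominated-convergence majorant in the case $x<0$ you should either deduce boundedness from $F_2\in L^1(-\infty,0)$ combined with the uniform continuity of the lower-order derivatives, or simply run the same uniform-continuity difference-quotient estimate you use for $x>0$.
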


\begin{proof}
We will show first that for $x>0$ we have
\begin{equation}\label{eq:toto}
\frac{d}{dx}F_1\ast F_2(x)=\int_x^{\infty}F_1(y)F_2'(x-y)\ dy-F_1(x)F_2(0).
\end{equation}
Fix $x>0$. For $h>0$ sufficiently small we have
\begin{multline}\label{eq:zerro}
\left|  \frac{F_1\ast F_2(x+h)-F_1\ast F_2 (x)}{h}-\int_x^{\infty}F_1(y)F_2'(x-y)\ dy+F_1(x)F_2(0)\right|\\
= \left| \frac{1}{h}\int_{x+h}^{\infty}F_1(y)F_2(x+h-y)\ dy -\frac{1}{h}\int_x^{\infty}F_1(y)F_2(x-y)\ dy\right.\\
\left.-\int_x^{\infty}F_1(y)F_2'(x-y)\ dy + F_1(x)F_2(0)\right|\\
\leq\int_{x+h}^{\infty}|F_1(y)|\left|\frac{F_2(x+h-y)-F_2(x-y)}{h}-F_2'(x-y) \right|\ dy\\
+\left|F_1(x)F_2(0)-\frac{1}{h}\int_x^{x+h}F_1(y)F_2(x+h-y)\ dy \right|\\
+\left|\int_x^{x+h}F_1(y)F_2'(x-y)\ dy \right|.
\end{multline}
There exists $\theta=\theta(y,h) \in (0,h)$ such that
\begin{equation*}
\frac{F_2(x+h-y)-F_2(x-y)}{h}=F_2'(x-y+\theta).
\end{equation*}
By uniform continuity of the function $F_2'$ on the interval $(-\infty,0]$ (see the remark in the footnote on page~\pageref{bbb}), for $h>0$ sufficiently small and for $y\in [x+h,\infty)$ the following inequality holds: $|F_2'(x-y+\theta)-F_2'(x-y)|<\varepsilon$. Therefore and by the integrability of $F_1$ we obtain
\begin{equation}\label{eq:adin}
\int_{x+h}^{\infty}|F_1(y)|\left|\frac{F_2(x+h-y)-F_2(x-y)}{h}-F_2'(x-y) \right|\ dy<\varepsilon
\end{equation} 
for small enough $h>0$. Moreover,
\begin{multline*}
\left|F_1(x)F_2(0)-\frac{1}{h}\int_x^{x+h}F_1(y)F_2(x+h-y)\ dy \right|\\
\leq\frac{1}{h}\int_x^{x+h} \left|F_1(y)F_2(x+h-y)-F_1(x)F_2(0) \right|\ dy.
\end{multline*}
Since the function $F_1$ is continuous at point $x$, the function $F_2$ is uniformly continuous, therefore for small $h>0$ we obtain
\begin{equation*}
|F_1(y)F_2(x+h-y)-F_1(x)F_2(0)|<\varepsilon
\end{equation*}
for any $y\in(x,x+h)$. Therefore
\begin{equation}\label{eq:dwwa}
\frac{1}{h}\int_x^{x+h} \left|F_1(y)F_2(x+h-y)-F_1(x)F_2(0) \right|\ dy<\varepsilon.
\end{equation}
Since the function $F_1$ is integrable, and the function $F_2'$, being is uniformly continuous, is bounded on the interval $(x,x+h)$, therefore
\begin{equation}\label{eq:trri}
\left|\int_x^{x+h}F_1(y)F_2'(x-y)\ dy \right|<\varepsilon
\end{equation}
when $h>0$ is sufficiently small. By~\eqref{eq:zerro},~\eqref{eq:adin},~\eqref{eq:dwwa} and~\eqref{eq:trri} it follows that
\begin{equation*}
(F_1\ast F_2)'_+(x)=\int_x^{\infty}F_1(y)F_2'(x-y)\ dy-F_1(x)F_2(0).
\end{equation*}
In a similar way one can show that
\begin{equation*}
(F_1\ast F_2)'_-(x)=\int_x^{\infty}F_1(y)F_2'(x-y)\ dy-F_1(x)F_2(0),
\end{equation*}
whence the formula~\eqref{eq:toto} indeed holds. Using the same methods one can show that the formula for the first derivative also holds for negative arguments. The formulas for the higher order derivatives hold by the induction.
\end{proof}

\subsubsection{Series expansions and rescaled densities}\label{sub:rws}

\paragraph{Series expansions for convolutions}
We will now look for properties of convolutions and their derivatives for functions $F_i\in L^1(0,\infty)$, $i\in \N$, which in the right-hand neighbourhood of zero are of the form
$$
F_i(x)=x^{a_i} \cdot \widetilde{F}_i(x),
$$
where $a_i>-1$ and $\widetilde{F}_i\colon (0,\infty)\to\R$ is a function which is analytic at zero (i.e. for $x>0$ small enough we have $\widetilde{F}_i(x)=\sum_{n=0}^{\infty}a_{i,n} x^n$).\footnote{Note that condition $a_i>-1$ is necessary for $F_i$ to be integrable.}

\begin{uw}\label{lm:calki}
Recall that beta function for $x,y\in\mathbb{C}$ with $\text{Re}(x),\text{Re}(y)>0$ is defined by $B(x,y)=\int_0^1 t^{x-1}(1-t)^{y-1}\ dt$ and $B(x,y)=\frac{\Gamma(x)\Gamma(y)}{\Gamma(x+y)}$ (see e.g.~\cite{abramowitz+stegun}). Hence for  $a_1,a_2>-1$ we have
\begin{multline*}
\int_0^x y^{a_1}\cdot (x-y)^{a_2}\ dy\\
=x^{a_1+a_2+1}\int_0^1 y^{a_1}(1-y)^{a_2}\ dy=\frac{\Gamma(a_1+1)\Gamma(a_2+1)}{\Gamma(a_1+a_2+2)}x^{a_1+a_2+1}.
\end{multline*}

\end{uw}

\begin{pr}\label{pr:dwie}
Assume that the functions $F_1,F_2\in L^1(0,\infty)$ satisfy the following conditions:
\begin{itemize}
\item
in the right-hand side neighbourhood of zero $F_1(x)=x^{a_1} \cdot \widetilde{F}_1(x)$, where $a_1\in\R$, the function $\widetilde{F}_1$ is analytic at zero with the series expansion around zero given by $\widetilde{F}_1(x)=\sum_{n=0}^{\infty}B_nx^n$,
\item
in the right-hand side neighbourhood of zero $F_2(x)=x^{a_2} \cdot \widetilde{F}_2(x)$, where $a_2\in\R$, the function $\widetilde{F}_2$ is analytic at zero with the series expansion around zero given by  $\widetilde{F}_2(x)=\sum_{n=0}^{\infty}C_nx^n$.
\end{itemize}
Then the function $F_1\ast F_2$ is of the following form in the right-hand side neighbourhood of zero:
\begin{multline}\label{eq:wspdla2}
F_1\ast F_2(x)\\
=\sum_{n=0}^{\infty} \left(\sum_{k=0}^{n}\left(B_k C_{n-k}\frac{\Gamma(k+a_1+1)\Gamma(n-k+a_2+1)}{\Gamma(n+a_1+a_2+2)} \right) x^{n+a_1+a_2+1}\right).
\end{multline}
\end{pr}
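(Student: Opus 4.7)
The plan is to use the power series representations of $F_1$ and $F_2$ near zero, substitute them into the convolution integral, interchange summation and integration by absolute convergence, and apply the Beta-function formula from Remark~\ref{lm:calki} term by term.

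First I would fix $\rho > 0$ smaller than both radii of convergence of the series defining $\widetilde{F}_1$ and $\widetilde{F}_2$ at zero, and restrict to $x \in (0, \rho)$. Then for $y \in [0, x]$ both $y$ and $x - y$ lie in $[0, \rho)$, so the expansions are valid pointwise on the interval of integration, giving
$$
F_1 \ast F_2(x) = \int_0^x y^{a_1}(x-y)^{a_2}\, \widetilde{F}_1(y)\, \widetilde{F}_2(x-y)\, dy.
$$
Multiplying the two power series yields the double series $\widetilde{F}_1(y)\,\widetilde{F}_2(x-y) = \sum_{k,j \ge 0} B_k C_j\, y^k (x-y)^j$, which converges absolutely and uniformly on $[0, x]$, with summable majorant arising from $\sum_{k,j}|B_k C_j|\rho^{k+j} < \infty$.

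Next, by a Fubini/Tonelli argument applied to the nonnegative function $y^{a_1}(x-y)^{a_2}\sum_{k,j}|B_k C_j|\, y^k (x-y)^j$ (which is integrable on $[0, x]$ because $a_1, a_2 > -1$, a condition forced by $F_1, F_2 \in L^1(0,\infty)$), the summation and the integral can be exchanged, yielding
$$
F_1 \ast F_2(x) = \sum_{k,j=0}^{\infty} B_k C_j \int_0^x y^{k+a_1}(x-y)^{j+a_2}\, dy.
$$
Each inner integral is evaluated by Remark~\ref{lm:calki} to be $\frac{\Gamma(k+a_1+1)\Gamma(j+a_2+1)}{\Gamma(k+j+a_1+a_2+2)}\, x^{k+j+a_1+a_2+1}$. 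A reindexing by $n = k + j$ collects the terms with the same exponent of $x$ and produces the formula~\eqref{eq:wspdla2}.

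The one point requiring real care is justifying the interchange of the double sum and the integral, since the coefficients $B_k$, $C_j$ may be of arbitrary sign and monotone convergence is unavailable. The argument instead rests on absolute convergence of the Cauchy product on the square $[0,\rho)^2$ together with integrability of $y^{a_1}(x-y)^{a_2}$ on $[0, x]$, which jointly furnish the integrable majorant required for the dominated version of Fubini. Everything else reduces to the routine Beta integral and a reindexing.
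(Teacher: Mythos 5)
Your proof is correct and follows essentially the same route as the paper: expand both factors in their series near zero, interchange summation and integration, evaluate each term by the Beta integral of Remark~\ref{lm:calki}, and reindex by $n=k+j$. The only cosmetic difference is that you justify the interchange by Fubini--Tonelli with an integrable majorant, while the paper bounds the tail of the Cauchy product uniformly (via Mertens' theorem); both rest on the same absolute convergence on a small interval, together with the observation (which you rightly make explicit) that $a_1,a_2>-1$ is what makes the weight $y^{a_1}(x-y)^{a_2}$ integrable.
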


\begin{proof}
For $n\in\N$ let 
$$f_n(x,y)=\sum_{k=0}^{n}B_kC_{n-k}y^k(x-y)^{n-k}.$$
By analicity of the functions $\widetilde{F}_1, \widetilde{F}_2$ it follows that there exists $x_0>0$ such that for $x\in(0,x_0)$ we have
\begin{multline*}
F_1\ast F_2 (x)= \int_{0}^{x} y^{a_1}\cdot (x-y)^{a_2}\cdot\left(\sum_{n=0}^{\infty}B_ny^n\right) \cdot \left(\sum_{n=0}^{\infty}C_n (x-y)^n \right)\ dy\\
=\int_0^x \left( y^a_1\cdot (x-y)^{a_2}\cdot \sum_{n=0}^{\infty}f_n(x,y)\right)\ dy\\
= \int_{0}^{x}\left( y^a_1\cdot (x-y)^{a_2}\cdot\sum_{n=N+1}^{\infty}f_n(x,y)\right)\ dy+\sum_{n=0}^{N}\int_{0}^{x}y^a_1\cdot (x-y)^{a_2}\cdot f_n(x,y)\ dy.
\end{multline*}

Notice that for $N\in\N$ sufficiently large $\left|\sum_{n=N+1}^{\infty}f_n(x,y) \right|<\varepsilon$ for any $y\in(0,x)$. Indeed, we may assume that the series $\sum_{n=0}^{\infty}|B_n|\cdot x_0^n$ and $\sum_{n=0}^{\infty}|C_n|\cdot x_0^n$ converge. Since for $y\in (0,x)\subset (0,x_0)$ we have
\begin{equation*}
|B_k\cdot y^k| \leq |B_k| \cdot x_0^k \text{ and }|C_{n-k}\cdot (x-y)^{n-k}|\leq |C_{n-k}\cdot x_0^{n-k}|,
\end{equation*}
therefore by the Mertens' theorem on convergence of Cauchy products of series and by the comparison test the series $\sum_{n=0}^{\infty}f_n(x,y)$ converges uniformly in $x\in (0,x_0)$ and $y\in (0,x)$.

Therefore by Remark~\ref{lm:calki}
\begin{multline*}
\left|\int_0^x y^a_1\cdot (x-y)^{a_2}\cdot\sum_{n=N+1}^{\infty} f_n(x,y)\ dy\right|\\
<\varepsilon \cdot \frac{\Gamma(a_1+1)\Gamma(a_2+1)}{\Gamma(a_1+a_2+2)}x^{a_1+a_2+1}
\leq \varepsilon \cdot \frac{\Gamma(a_1+1)\Gamma(a_2+1)}{\Gamma(a_1+a_2+2)}x_0^{a_1+a_2+1}
\end{multline*}
for $N$ sufficiently large. Therefore
\begin{equation*}
\int_0^x \sum_{n=0}^{\infty}y^a_1\cdot (x-y)^{a_2}\cdot f_n(x,y)\ dy = \sum_{n=0}^{\infty} \int_{0}^{x} y^a_1\cdot (x-y)^{a_2}\cdot f_n(x,y)\ dy.
\end{equation*}
Using again Remark~\ref{lm:calki}, we obtain
\begin{multline*}
\int_0^x y^a_1\cdot (x-y)^{a_2}\cdot f_n(x,y)\ dy\\
=\left(\sum_{k=0}^n B_k C_{n-k}\frac{\Gamma(k+a_1+1)\Gamma(n-k+a_2+1)}{\Gamma(n+a_1+a_2+1)}\right)x^{n+a_1+a_2+1},
\end{multline*}
which ends the proof.
\end{proof}

\paragraph{Convolutions of rescaled functions and their series expansions}
For a function $F\colon \R\to \R$ and $t\neq 0$ we will use the following notation:
$$
F_t(x)=\frac{1}{|t|}F\left(\frac{x}{t}\right),\ F_t\colon \R \to \R.
$$

\begin{wn}\label{wn:9}
Assume that the function $F\in L^1(0,\infty)$ in the right-hand side neighbourhood of zero is of the form $F(x)=x^a\cdot \widetilde{F}(x)$, where $a>-1$, $\widetilde{F}$ is  analytic at zero with the expansion around zero given by $\widetilde{F}(x)=\sum_{n=0}^{\infty}a_nx^n$. Then for $d\geq 1$ and $t_1,\dots,t_d>0$ in the right-hand side neighbourhood of zero we have
\begin{multline}\label{eq:rdrd}
F_{t_1}\ast \dots \ast F_{t_d}(x)=\frac{x^{d\cdot a}}{(t_1\cdot\ldots\cdot t_d)^{1+a}}\cdot\\
\cdot \sum_{n=0}^{\infty}\left(\sum_{k_1+\dots+ k_d=n}\prod_{i=1}^{d} \frac{a_{k_i}\cdot \Gamma(k_i+1+a)}{t_i^{k_i}}\cdot\frac{1}{\Gamma(n+d+d\cdot a)}\right)x^{n+d-1}.
\end{multline}
In particular, for $d\geq 1$ and $t_1,\dots,t_d>0$ we have:
\begin{itemize}
\item
for $a=0$
\begin{multline*}
F_{t_1}\ast \dots \ast F_{t_d}(x)=\frac{1}{t_1\cdot\ldots\cdot t_d}\cdot\\
\cdot \sum_{n=0}^{\infty}\left(\sum_{k_1+\dots+ k_d=n}\prod_{i=1}^{d} \frac{a_{k_i}\cdot \Gamma(k_i+1)}{t_i^{k_i}}\cdot\frac{1}{\Gamma(n+d)}\right)x^{n+d-1},
\end{multline*}
\item
for $a=-\frac{1}{2}$ and $d$ even
\begin{multline*}
F_{t_1}\ast \dots \ast F_{t_d}(x)=\frac{x^{-d/2}}{(t_1\cdot\ldots\cdot t_d)^{1/2}}\cdot\\
\cdot \sum_{n=0}^{\infty}\left(\sum_{k_1+\dots+ k_d=n}\prod_{i=1}^{d} \frac{a_{k_i}\cdot \Gamma(k_i+1/2)}{t_i^{k_i}}\cdot\frac{1}{\Gamma(n+d/2)}\right)x^{n+d-1}.
\end{multline*}
\item
for $a=-\frac{1}{2}$ and $d$ odd
\begin{multline}\label{eq:wzorparz}
F_{t_1}\ast \dots \ast F_{t_d}(x)=\frac{x^{\frac{-d+1}{2}}}{(t_1\cdot\ldots\cdot t_d\cdot x)^{1/2}}\cdot\\
\cdot \sum_{n=0}^{\infty}\left(\sum_{k_1+\dots+ k_d=n}\prod_{i=1}^{d} \frac{a_{k_i}\cdot \Gamma(k_i+1/2)}{t_i^{k_i}}\cdot\frac{1}{\Gamma(n+d/2)}\right)x^{n+d-1}.
\end{multline}
\end{itemize}
Moreover, for $a=0$, $d\geq 1$ and $t_1,\dots,t_d<0$ we have
\begin{multline*}
F_{t_1}\ast \dots \ast F_{t_d}(x)=-\frac{1}{t_1\cdot\ldots\cdot t_d}\cdot\\
\cdot \sum_{n=0}^{\infty}\left(\sum_{k_1+\dots+ k_d=n}\prod_{i=1}^{d} \frac{a_{k_i}\cdot \Gamma(k_i+1)}{t_i^{k_i}}\cdot\frac{1}{\Gamma(n+d)}\right)x^{n+d-1},
\end{multline*}
whereas for $a=-\frac{1}{2}$ and $t_1,\dots,t_d<0$ we have
\begin{itemize}
\item 
for $d$ even
\begin{multline}\label{eq:wzorparz2}
F_{t_1}\ast \dots \ast F_{t_d}(x)=(-1)^{\frac{d-2}{2}}\frac{x^{-d/2}}{(t_1\cdot\ldots\cdot t_d)^{1/2}}\cdot\\
\cdot \sum_{n=0}^{\infty}\left(\sum_{k_1+\dots+ k_d=n}\prod_{i=1}^{d} \frac{a_{k_i}\cdot \Gamma(k_i+1/2)}{t_i^{k_i}}\cdot\frac{1}{\Gamma(n+d/2)}\right)x^{n+d-1},
\end{multline}
\item
for $d$ odd
\begin{multline*}
F_{t_1}\ast \dots \ast F_{t_d}(x)=(-1)^{\frac{-d+1}{2}}\frac{x^{\frac{-d+1}{2}}}{(t_1\cdot\ldots\cdot t_d\cdot x)^{1/2}}\cdot\\
\cdot \sum_{n=0}^{\infty}\left(\sum_{k_1+\dots+ k_d=n}\prod_{i=1}^{d} \frac{a_{k_i}\cdot \Gamma(k_i+1/2)}{t_i^{k_i}}\cdot\frac{1}{\Gamma(n+d/2)}\right)x^{n+d-1}.
\end{multline*}
\end{itemize}
\end{wn}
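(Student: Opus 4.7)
The plan is to induct on $d$, treating the case $t_1,\dots,t_d>0$ first via repeated application of Proposition~\ref{pr:dwie}, and then reducing the case $t_1,\dots,t_d<0$ to it by means of the reflection $x\mapsto -x$.

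The base case $d=1$ is immediate: substituting $F(x)=x^{a}\sum_n a_n x^n$ into $F_t(x)=\frac{1}{t}F(x/t)$ yields $F_t(x)=\frac{x^a}{t^{1+a}}\sum_n\frac{a_n}{t^n}x^n$, which agrees with~\eqref{eq:rdrd} since for $d=1$ the sum collapses to $k_1=n$ and $\Gamma(n+1+a)/\Gamma(n+1+a)=1$. For the inductive step $d\to d+1$, set $G=F_{t_1}\ast\dots\ast F_{t_d}$. By the inductive hypothesis $G(x)=x^{d(a+1)-1}\widetilde{G}(x)$ near zero with $\widetilde{G}$ analytic at zero, whose Taylor coefficients can be read off from~\eqref{eq:rdrd}. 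I apply Proposition~\ref{pr:dwie} to $G$ and $F_{t_{d+1}}$ with exponents $a_1=d(a+1)-1$ and $a_2=a$, so that $a_1+a_2+2=(d+1)(a+1)$. The key telescoping observation is that the Gamma factor $\Gamma(k+a_1+1)=\Gamma(k+d(a+1))$ produced in the numerator by Proposition~\ref{pr:dwie} cancels exactly against the $\Gamma(k+d(1+a))$ appearing in the denominator of the $k$-th Taylor coefficient of $\widetilde{G}$. Re-indexing the double sum $\sum_{k=0}^{n}\sum_{k_1+\dots+k_d=k}$ by $k_{d+1}=n-k$ merges it into the single sum $\sum_{k_1+\dots+k_{d+1}=n}$, producing~\eqref{eq:rdrd} with $d+1$ in place of $d$.

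The specialisations $a=0$ and $a=-\tfrac{1}{2}$ require no new calculation, only a cosmetic rewriting of the prefactor $x^{da}$. For $a=-\tfrac{1}{2}$ with $d$ odd the exponent $x^{-d/2}$ is split as $x^{(-d+1)/2}\cdot x^{-1/2}$, so that the $x^{-1/2}$ combines with $(t_1\cdots t_d)^{1/2}$ into the joint factor $(t_1\cdots t_d\cdot x)^{1/2}$ appearing in~\eqref{eq:wzorparz}.

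Finally, for $t_1,\dots,t_d<0$ I set $s_i=-t_i>0$, and a short computation shows that $F_{t_i}(x)=F_{s_i}(-x)$; since convolution commutes with the reflection $x\mapsto -x$, one obtains $F_{t_1}\ast\dots\ast F_{t_d}(x)=(F_{s_1}\ast\dots\ast F_{s_d})(-x)$, so the already-proved positive case applies at $-x$. Distributing the signs carefully -- namely $s_i^{k_i}=(-1)^{k_i}t_i^{k_i}$, contributing $(-1)^n$ after summation over $k_1+\dots+k_d=n$; $s_1\cdots s_d=(-1)^d(t_1\cdots t_d)$; $(-x)^{da}$; and $(-x)^{n+d-1}=(-1)^{n+d-1}x^{n+d-1}$ -- one checks that the powers of $-1$ combine to the stated constants: $-1$ for $a=0$, $(-1)^{(d-2)/2}$ for $a=-\tfrac{1}{2}$ with $d$ even (matching~\eqref{eq:wzorparz2}), and $(-1)^{(-d+1)/2}$ for $a=-\tfrac{1}{2}$ with $d$ odd. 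The main obstacle is not conceptual but combinatorial: the induction requires the Gamma-function cancellation to be lined up precisely, and the reflection argument requires careful sign tracking, where the case $d$ odd is the most delicate -- there the product $t_1\cdots t_d$ is negative and only the identity $s_1\cdots s_d\cdot(-x)=t_1\cdots t_d\cdot x$ (guaranteeing positivity inside the square root) makes the expansion~\eqref{eq:wzorparz} meaningful.
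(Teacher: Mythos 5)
Your proposal is correct and follows essentially the same route as the paper: the positive case is obtained by induction on $d$ via Proposition~\ref{pr:dwie} (with the same Gamma-factor cancellation), and the negative case by the reflection identity $F_{t_1}\ast\dots\ast F_{t_d}(x)=F_{-t_1}\ast\dots\ast F_{-t_d}(-x)$ with sign bookkeeping. You simply spell out the details that the paper leaves to the reader.
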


\begin{proof}
To prove the first of the announced formulas it suffices to use Proposition~\ref{pr:dwie} and the induction on the number of functions. The formulas for $t_1,\dots,t_d<0$ 
follow from the formulas for $t_1,\dots,t_d>0$ and the relations
$$
F_{t_1}\ast \dots \ast F_{t_d}(x)=F_{-t_1}\ast \dots\ast F_{-t_d}(-x)
$$
for $x\in\R$.
\end{proof}

\begin{uw}
Under the assumptions of Corollary~\ref{wn:9} (and using the same notation), for $a=0$ and $t_1,\dots,t_d$ of the same sign the function $F_{t_1}\ast\dots\ast F_{t_d}$ is analytic at zero, and for $a=-\frac{1}{2}$ and $t_1,\dots,t_d$ of the same sign it is analytic at zero for $d$ even.
\end{uw}

\subsubsection{Finding coefficients of series expansions}\label{p343}

\paragraph{General case}

\begin{df}\label{def:dostszyb}
Let $F$ be a smooth (i.e. of class $C^{\infty}$) real-valued function with the domain $(0,\infty)$, $(-\infty,0)$ or $(-\infty,0)\cup (0,\infty)$. We say that \emph{all derivatives of $F$ decay ``sufficiently fast''}, if for any $n\geq 0$ there exist numbers $x_n>0$ and non-increasing functions $H_n\in L^1(x_n,\infty)$ such that
\begin{equation*}
\left|\frac{d^n}{dx^n}F(x) \right|\leq H_n(|x|) \text{ for }|x|>x_n.
\end{equation*}
\end{df}

\begin{lm}\label{lm:pomo}
Let $F \colon (-\infty,0)\cup(0,\infty) \to \R$ be a function such that all its derivatives decay ``sufficiently fast''\footnote{The function $F$ may vanish on one of the half-lines $(-\infty,0)$ or $(0,\infty)$.}. Then the function $G\colon \left(-\frac{1}{2},\frac{1}{2}\right) \to \R$ given by the formula $G(x)= \sum_{k\neq 0,k\in \Z}F(x)$ is smooth. In particular
\begin{equation}\label{eq:conti}
\lim_{x\to 0}\left(\frac{d^n}{dx^n}G(x)-\frac{d^n}{dx^n}G(-x)\right)=0
\end{equation}
for $n\geq 0$.
\end{lm}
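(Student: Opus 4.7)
The plan is to interpret the series (the summand as written contains no dependence on $k$, which is almost certainly a typo) as $G(x)=\sum_{k\in\Z,\,k\neq 0}F(x+k)$, and to prove smoothness by showing that every formal derivative converges locally uniformly on $(-1/2,1/2)$. Once smoothness is established, the identity $\lim_{x\to 0}\bigl(G^{(n)}(x)-G^{(n)}(-x)\bigr)=0$ is immediate from continuity of $G^{(n)}$ at $0$.

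First I would fix $n\geq 0$ and a compact subinterval $K=[-1/2+\delta,1/2-\delta]\subset(-1/2,1/2)$ for some $\delta>0$. For $x\in K$, as $k$ ranges over nonzero integers, the argument $x+k$ stays away from $0$; in fact $|x+k|\geq |k|-1/2$. Choose $K_0$ large enough that $|k|-1/2>x_n$ whenever $|k|\geq K_0$. By hypothesis and the fact that $H_n$ is non-increasing,
\[
\bigl|F^{(n)}(x+k)\bigr|\leq H_n(|x+k|)\leq H_n(|k|-1/2)
\]
for all $x\in K$ and all $|k|\geq K_0$. Because $H_n$ is non-increasing and integrable on $(x_n,\infty)$, the integral test gives $\sum_{|k|\geq K_0}H_n(|k|-1/2)<\infty$, so the Weierstrass $M$-test yields uniform convergence of $\sum_{k\neq 0}F^{(n)}(x+k)$ on $K$ (the finitely many terms with $|k|<K_0$ are smooth functions of $x$ on $K$, with all $x+k$ bounded away from $0$, and cause no problem).

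Second, I would apply the standard theorem on term-by-term differentiation: if a series of $C^{1}$ functions converges at one point and the series of derivatives converges uniformly on compacta, then the sum is $C^{1}$ and the derivative is obtained term-by-term. Iterating this for all $n\geq 0$, using that each $F(\cdot+k)$ for $k\neq 0$ is smooth on $(-1/2,1/2)$ (the singularity of $F$ at $0$ is not reached), gives $G\in C^{\infty}(-1/2,1/2)$ together with the formula $G^{(n)}(x)=\sum_{k\neq 0}F^{(n)}(x+k)$.

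Finally, the "in particular" assertion is automatic: smoothness of $G$ on $(-1/2,1/2)$ gives continuity of $G^{(n)}$ at $0$, so
\[
\lim_{x\to 0}\bigl(G^{(n)}(x)-G^{(n)}(-x)\bigr)=G^{(n)}(0)-G^{(n)}(0)=0.
\]
The only mild obstacle is confirming that the decay hypothesis really produces a summable majorant for the series of $n$-th derivatives, but this follows directly from the monotonicity of $H_n$ combined with $H_n\in L^1(x_n,\infty)$ via the integral test; everything else is routine application of the uniform convergence theorem.
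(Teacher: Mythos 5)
Your proof is correct and follows essentially the same route as the paper: dominate the tail terms $|F^{(n)}(x+k)|$ by $H_n(|x+k|)$, use monotonicity and integrability of $H_n$ to get a summable majorant, and then differentiate the series term by term, the final limit being immediate from continuity of $G^{(n)}$ at $0$. Your reading of the summand as $F(x+k)$ (fixing the typo) matches the paper's intent, and the extra care with compact subintervals and the integral test only makes explicit what the paper leaves implicit.
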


\begin{proof}
By the assumption we have
\begin{equation*}
\sum_{|k|\geq x_0+1,k\in\Z}\left|F(x+k) \right|\leq \sum_{|k|\geq x_0+1,k\in\Z}H_0(|x+k|)
\end{equation*}
for $x\in(-\frac{1}{2},\frac{1}{2})$.
Notice that by monotonicity of the function $H_0$ and the comparison test, the series $\sum_{k\neq 0,k\in\Z}F(x+k)$ is uniformly convergent on the interval $\left(-\frac{1}{2},\frac{1}{2}\right)$. Therefore $G$ is continuous. In the same way one can show that $G$ is smooth. Indeed, we have
\begin{equation*}
\sum_{|k|\geq x_n+1,k\in\Z}\left|\frac{d^n}{dx^n}F(x+k) \right|\leq \sum_{|k|\geq x_n+1,k\in\Z}H_n(|x+k|),
\end{equation*}
where the series on the right-hand side of the inequality is uniformly convergent, whence we can interchange the order of differentiation and summing the series in the calculations to obtain the derivatives of $G$:
\begin{equation*}
\frac{d^n}{dx^n}G(x)=\frac{d^n}{dx^n}\left(\sum_{k\neq 0,k\in\Z}F(x+k) \right)=\sum_{k\neq 0,k\in\Z}\frac{d^n}{dx^n}F(x+k).
\end{equation*}
Therefore $G\in C^{\infty}\left(-\frac{1}{2},\frac{1}{2} \right)$.
\end{proof}

\begin{pr}\label{pr:nawijanie1}
Assume that the function $F\in L^1(0,\infty)$ satisfies the following conditions:
\begin{itemize}
\item
in the right-hand side neighbourhood of zero $F$ it is of the form $F(x)=x^a\cdot \widetilde{F}(x)$, where $a>-1$, the function $\widetilde{F}\colon (0,\infty)\to \R$ is analytic at zero with the series expansion in the right-hand side neighbourhood of zero given by $\widetilde{F}(x)=\sum_{n=0}^{\infty}A_nx^n$,
\item
$F$ is smooth,
\item
all the derivatives of $F$ decay ``sufficiently fast''.
\end{itemize}
Then the function $(\cdot \text{ mod }1)_{\ast}(F)$ determines the coefficients $A_n$ for $n\in \N$.\footnote{Recall that the function $x \mapsto x\text{ mod }1$ assigns the fractional part to reals, whence for any function $F$ the following formula holds: $(\cdot \text{ mod }1)_{\ast}(F)(x)=\sum_{k\in \Z}F(x+k)$.}
\end{pr}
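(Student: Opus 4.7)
The strategy is to split the pushforward $H(x):=(\cdot\text{ mod }1)_{\ast}(F)(x)=\sum_{k\in\Z}F(x+k)$ on $(-1/2,1/2)$ into two pieces: $F$ itself, which is supported in the right-hand half, and the ``tail'' $G(x):=\sum_{k\neq 0}F(x+k)$. By Lemma~\ref{lm:pomo}, the sufficiently fast decay of $F$ and all its derivatives forces $G\in C^{\infty}(-1/2,1/2)$, so $G$ is smooth across zero while all the irregular (analytic-at-zero-times-$x^a$) behaviour at $0$ is carried by $F$. Since $F$ vanishes on $(-\infty,0)$, one has the clean split $H|_{(-1/2,0)}=G|_{(-1/2,0)}$ and $H|_{(0,1/2)}=F|_{(0,1/2)}+G|_{(0,1/2)}$.

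First I would recover, from $H$ alone, the full formal Taylor data of $G$ at $0$. Since $H=G$ on $(-1/2,0)$ and $G$ is smooth at $0$, each derivative satisfies
\begin{equation*}
G^{(n)}(0)=\lim_{x\to 0^-} H^{(n)}(x),\qquad n\geq 0,
\end{equation*}
so the whole Taylor polynomial of $G$ at $0$ is determined by the restriction $H|_{(-1/2,0)}$.

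Next I would pass to the right-hand side and read off the coefficients $A_n$. Analyticity of $\widetilde{F}$ at $0$ yields, for each $M$, the right-hand expansion $F(x)=\sum_{m=0}^{M}A_m x^{m+a}+O(x^{M+1+a})$, while Taylor's theorem applied to the smooth function $G$ gives $G(x)=\sum_{n=0}^{N}\frac{G^{(n)}(0)}{n!}x^n+o(x^N)$ for every $N$. Combining,
\begin{equation*}
H(x)-\sum_{n=0}^{N}\frac{G^{(n)}(0)}{n!}x^n=\sum_{m=0}^{M}A_m x^{m+a}+O(x^{M+1+a})+o(x^N)
\end{equation*}
as $x\to 0^+$, and the $A_m$ are extracted inductively: once $A_0,\ldots,A_{m_0-1}$ are known, choose $M\geq m_0$ and $N>m_0+a$ and set
\begin{equation*}
A_{m_0}=\lim_{x\to 0^+}x^{-(m_0+a)}\!\left(H(x)-\sum_{n=0}^{N}\frac{G^{(n)}(0)}{n!}x^n-\sum_{m=0}^{m_0-1}A_m x^{m+a}\right).
\end{equation*}

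The main point to be careful about is the bookkeeping for the two asymptotic scales: the powers $x^{m+a}$ coming from $F$ and the integer powers $x^n$ coming from the Taylor polynomial of $G$. If $a\notin\{0,1,2,\ldots\}$ the two scales are disjoint and the separation is automatic; if $a$ is a nonnegative integer the scales coincide, but the Taylor data of $G$ has already been pinned down by the one-sided limits from the left, so no ambiguity arises when extracting $A_m$ on the right. The crucial external input is Lemma~\ref{lm:pomo}, without which $G$ would carry no regularity at $0$ and the entire splitting strategy would collapse.
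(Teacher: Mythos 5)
Your argument is correct. It uses the same decomposition $(\cdot\text{ mod }1)_{\ast}(F)=F+G$ with $G(x)=\sum_{k\neq 0}F(x+k)$, and the same key input, Lemma~\ref{lm:pomo}, as the paper, but the extraction step is organized differently. The paper never computes the Taylor jet of $G$: after reducing to $a\le 0$, it takes antisymmetrized limits of the form $\lim_{x\to 0^+}x^{-a}\bigl(\frac{d^k}{dx^k}W(x)-\frac{d^k}{dx^k}W(-x)\bigr)$, corrected by the part of the expansion already found, so that the smooth tail cancels in the limit and $A_k$ is recovered up to the factor $\Gamma(k+1+a)/\Gamma(1+a)$ produced by differentiating $x^{n+a}$ term by term. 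You instead pin down the full jet of $G$ from the left, $G^{(n)}(0)=\lim_{x\to 0^-}H^{(n)}(x)$ (possible because $F$ vanishes on $(-\infty,0)$, so $H=G$ there), subtract the resulting Taylor polynomial on the right, and read off $A_m$ from undifferentiated values of $H$ via $x^{-(m+a)}$ limits; this avoids the reduction to $a\le 0$, the term-by-term differentiation of the fractional-power series and the Gamma bookkeeping, and your treatment of the overlap of the two scales $x^n$ and $x^{m+a}$ (including integer $a$) is sound precisely because the jet of $G$ is fixed before the right-hand extraction. What the paper's antisymmetrization buys that your one-sided shortcut does not: it carries over essentially verbatim to Proposition~\ref{pr:nawijanie2}, where the function being unwrapped is a convolution $F_1\ast F_2$ supported on both half-lines, so the left-hand side of $H$ is no longer just the smooth tail and the trick of reading off the jet of $G$ from one side alone is unavailable.
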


\begin{proof}
Since $a=\left([a]+1\right)+\left(\{a\}-1\right)$, we have
$$
x^a\cdot \sum_{n=0}^{\infty}A_nx^n=x^{\{a\}-1}\cdot \sum_{n=0}^{\infty} A_n x^{n+[a]+1}.
$$
Therefore without loss of generality we may assume that $a\leq 0$.

Let the function $G\colon \left(-\frac{1}{2},\frac{1}{2}\right)\to\R$ be given by the formula
\begin{equation*}
G(x):=\sum_{k\neq 0,k\in\Z}F(x+k)=\sum_{k=1}^{\infty}F(x+k).
\end{equation*}
By Lemma~\ref{lm:pomo}
\begin{equation*}
\lim_{x\to 0}\left(\frac{d^n}{dx^n}G(x)-\frac{d^n}{dx^n}G(-x)\right)=0
\end{equation*}
for $n\geq 0$.

Let $W(x):=(\cdot \text{ mod }1)_{\ast}(F)(x).$ Then for $x\in \left[0,\frac{1}{2}\right)$ we have
$$
W(x)=F(x)+\sum_{k=1}^{\infty}F(x+k)=F(x)+G(x)
$$
and
\begin{equation*}
W(-x)=\sum_{k\in\Z}F(-x+k)=\sum_{k=1}^{\infty}F(-x+k)=G(-x).
\end{equation*}
Hence we obtain
\begin{multline*}
\lim_{x\to 0^+}x^{-a}(W(x)-W(-x))\\
=\lim_{x\to 0^+}x^{-a}F(x)+\lim_{x\to 0^+}x^{-a}(G(x)+G(-x))=\lim_{x\to 0^+}x^{-a}F(x)=A_0.
\end{multline*}
In the remaining part of the proof we will argue by induction. Suppose that for some $k\geq 1$ we already know $A_0,A_1,\dots,A_{k-1}$. We will find now $A_k$. Let
\begin{multline*}
\overline{A}_k=\lim_{x\to 0^+}\left(x^{-a}\left(\frac{d^{k}}{dx^{k}}W(x)-\frac{d^{k}}{dx^{k}}W(-x)\right)-x^{-a}\frac{d^{k}}{dx^{k}}\left(x^{a}\cdot\sum_{n=0}^{k-1}A_nx^n \right) \right).
\end{multline*}
By~\eqref{eq:conti} and by analicity of the function $\widetilde{F}$ at zero we obtain
\begin{multline*}
\overline{A}_k=\lim_{x\to 0^+}x^{-a}\left(\frac{d^{k}}{dx^{k}} F(x)-\frac{d^{k}}{dx^{k}}\left(x^{a}\cdot\sum_{n=0}^{k-1}A_nx^n\right)\right.\\
\left.+\frac{d^{k}}{dx^{k}}G(x)-\frac{d^{k}}{dx^{k}}G(-x)\right)\\
=\lim_{x\to 0^+}x^{-a}\left(\frac{d^{k}}{dx^{k}} \left( x^{a}\cdot\sum_{n=k}^{\infty}A_nx^n\right) \right)\\
=\lim_{x\to 0^+}\sum_{n=k}^{\infty}\left(n+a\right)\cdot \left(n+a-1\right)\cdot \ldots \cdot \left(n+a-k+1\right)A_nx^{n-k}\\
=(k+a)\cdot(k-1+a)\cdot\ldots\cdot (1+a) \cdot A_{k}=\frac{\Gamma(1+a)}{\Gamma(k+1+a)}{A}_k.
\end{multline*}
Hence $A_k=\frac{\Gamma(1+a)}{\Gamma(k+1+a)}\overline{A}_k$, which ends the proof.
\end{proof}

\begin{uw}
In the proof of the above proposition we obtain the following equality:
\begin{equation*}
x^{-a}\frac{d^{k}}{dx^{k}}\left(x^{a}\cdot\sum_{n=0}^{k-1}A_nx^n \right)=\sum_{r=1}^{k}A_{k-r}\frac{\Gamma(k+a-r+1)}{\Gamma(a-r+1)}x^{-r}.
\end{equation*}
\end{uw}

\begin{pr}\label{pr:nawijanie2}
Assume that the functions $F_1\in L^1(0,\infty)$ and $F_2\in L^1(-\infty,0)$ are such that:
\begin{itemize}
\item
in the right-hand side neighbourhood of zero the function $F_1$ is of the form $F_1(x)=x^{a}\cdot \widetilde{F}_1(x)$, where $a>-1$ and $\widetilde{F}_1$ is analytic at zero with the series expansion in the right-hand side neighbourhood of zero of the form $\widetilde{F}_1(x)=\sum_{n=0}^{\infty}B_nx^n$,
\item
$F_2$ is analytic at zero with the series expansion in the left-hand side neighbourhood of zero of the form $F_2(x)=\sum_{n=0}^{\infty}C_nx^n$,
\item
the functions $F_1$ and $F_2$ are smooth and their derivatives decay ``sufficiently fast'',
\item
all the derivatives of the function $F_1\ast F_2$ decay ``sufficiently fast''.
\end{itemize}
Then the function $(\cdot \text{ mod }1)_{\ast}(F_1\ast F_2)$ determines the value of the expressions
\begin{equation}\label{eq:wsprozne}
\widetilde{A}_n:=-\sum_{k=0}^{n}B_{n-k}C_{k}\frac{\Gamma(k+1)\Gamma(a+n-k+1)}{\Gamma(n+a+2)}
\end{equation}
for $n\geq 1$.
\end{pr}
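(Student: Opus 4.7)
The plan is to reduce to Proposition~\ref{pr:nawijanie1} by isolating the singular part of $\Phi:=F_1\ast F_2$ at $0$ into a piece supported on $(0,\infty)$ of the exact form treated there, modulo a globally smooth correction. Let $\widetilde F_2:(-\delta,\delta)\to\R$ be the analytic extension of $F_2$ near $0$, and define $F_{2,\mathrm{ext}}$ to equal $F_2$ on $(-\infty,0)$ and $\widetilde F_2$ on $[0,\delta)$. Set $P(x):=\int_0^\infty F_1(y)F_{2,\mathrm{ext}}(x-y)\,dy$. For $x<0$ small the integrand coincides with $F_1(y)F_2(x-y)$ throughout, so $P(x)=\Phi(x)$; for $x>0$ small, splitting the integral at $y=x$ and using that $F_{2,\mathrm{ext}}(x-y)$ equals $\widetilde F_2(x-y)$ on $(0,x)$ and $F_2(x-y)$ on $(x,\infty)$ yields
\[
\Phi(x)=P(x)-\int_0^x F_1(y)\widetilde F_2(x-y)\,dy.
\]
The subtracted integral is exactly the convolution computed in Proposition~\ref{pr:dwie} with exponents $a_1=a$, $a_2=0$ and coefficients $B_k,C_k$; its expansion near $0^+$ is $\sum_n x^{n+a+1}\sum_{k=0}^n B_k C_{n-k}\frac{\Gamma(k+a+1)\Gamma(n-k+1)}{\Gamma(n+a+2)}$, which after the reindexing $k\mapsto n-k$ coincides with $-\sum_n\widetilde A_n x^{n+a+1}$. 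Thus $\Phi(x)=P(x)+\sum_n\widetilde A_n x^{n+a+1}$ for $x>0$ small, and $\Phi(x)=P(x)$ for $x<0$ small.

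Next I would verify that $P$ is smooth on a neighbourhood of $0$: repeated differentiation under the integral sign is legitimate because $F_1\in L^1(0,\infty)$ absorbs the $y^a$ singularity, analyticity of $\widetilde F_2$ gives uniform bounds on all derivatives of $F_{2,\mathrm{ext}}$ on $[-\delta/2,\delta/2]$, and the sufficiently fast decay of $F_2$ controls the tail as $y\to\infty$. Writing $W:=(\cdot\!\!\!\!\mod 1)_\ast(\Phi)$ and decomposing, on $(-\tfrac12,\tfrac12)$, $W(x)=\Phi(x)+G(x)$ with $G(x):=\sum_{k\neq 0}\Phi(x+k)$, the hypothesis that all derivatives of $\Phi$ decay sufficiently fast together with Lemma~\ref{lm:pomo} makes $G$ smooth near $0$. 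Setting $Q:=P+G$, one obtains $W(x)=Q(x)+\1_{\{x>0\}}\sum_n\widetilde A_n x^{n+a+1}$ near $0$, which is exactly the local form handled in the proof of Proposition~\ref{pr:nawijanie1}, with $Q$ playing the role of the smooth tail $G$ there.

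At this point the induction of Proposition~\ref{pr:nawijanie1} carries over verbatim with $a+1$ in place of $a$ (the integer-shift reduction opening that proof places us in the effective range $(-1,0]$, ensuring the correction terms vanish in the relevant limits). Explicitly, given $\widetilde A_0,\dots,\widetilde A_{n-1}$, one recovers
\[
\widetilde A_n=\frac{\Gamma(a+2)}{\Gamma(n+a+2)}\lim_{x\to 0^+}\Bigl(x^{-(a+1)}\bigl(W^{(n)}(x)-W^{(n)}(-x)\bigr)-x^{-(a+1)}\frac{d^n}{dx^n}\Bigl(x^{a+1}\sum_{j<n}\widetilde A_j x^j\Bigr)\Bigr),
\]
the smooth odd contribution $Q(x)-Q(-x)$ being $O(x)$ and hence disappearing after multiplication by $x^{-(a+1)}$. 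The main technical obstacle I foresee is the smoothness of $P$ at $0$: the singular integrable factor $y^a$ in $F_1$ must not destroy the possibility of differentiating under the integral, which is secured by combining the $L^1$-bound on $F_1$ with the analytic control of $\widetilde F_2$ near $0$ and the sufficiently fast decay of the derivatives of $F_2$ away from $0$.
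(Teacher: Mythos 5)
Your argument is correct, and it takes a genuinely different route from the paper's. The paper works directly with the two-sided convolution: using the derivative formulas of Proposition~\ref{stw:deriv} it shows that, up to terms vanishing as $x\to0^+$, the difference $\frac{d^k}{dx^k}W(x)-\frac{d^k}{dx^k}W(-x)$ reduces to the boundary sum $-\sum_{l=0}^{k-1}F_1^{(k-l-1)}(x)\,F_2^{(l)}(0)$, and then runs an induction extracting $\widetilde A_k$ from these limits by explicit Gamma-function bookkeeping. You instead continue $F_2$ analytically across $0$, subtract the resulting completion $P(x)=\int_0^\infty F_1(y)F_{2,\mathrm{ext}}(x-y)\,dy$, and exhibit $F_1\ast F_2$ near $0$ as a smooth function plus the one-sided series $\sum_n\widetilde A_n x^{n+a+1}$, whose coefficients are exactly those of Proposition~\ref{pr:dwie} with $a_2=0$ (your reindexing and sign check are right); the recovery then reduces, via Lemma~\ref{lm:pomo} for the wrapped-around sum, to the induction already carried out in Proposition~\ref{pr:nawijanie1}. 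What your route buys is transparency: $\widetilde A_n$ is literally a local expansion coefficient of $F_1\ast F_2$ at $0^+$, and the Gamma algebra and boundary-term cancellations of the paper's proof need not be redone. What it costs is the extra verification that $P$ is smooth near $0$ (differentiation under the integral sign); the ingredients you list -- $F_1\in L^1(0,\infty)$, boundedness of all derivatives of the extended $F_2$ on $(-\infty,\delta']$ coming from analyticity at $0$, smoothness and the assumed decay at $-\infty$ -- do suffice, but this step should be written out, whereas the paper avoids it by only ever differentiating the actual convolution through Proposition~\ref{stw:deriv}.

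One point to phrase carefully: your displayed recovery formula, with the factor $x^{-(a+1)}$, is valid as written only when $a<0$, because for $a\ge0$ the smooth contribution $x^{-(a+1)}\bigl(Q^{(n)}(x)-Q^{(n)}(-x)\bigr)=O(x^{-a})$ need not vanish (for $a=0$ it tends to $2Q^{(n+1)}(0)$). As you indicate, one must first perform the integer-shift rewriting of $x^{a+1}\sum_n\widetilde A_nx^n$ so that the effective exponent lies in $(-1,0]$, and run the induction with the post-shift formula; then the smooth part does disappear in the limits and the nonvanishing of the resulting Gamma-quotient factors lets you recover all $\widetilde A_n$, $n\ge1$, as claimed.
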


\begin{proof}
The proof will be similar to the proof of Proposition~\ref{pr:nawijanie1}. As before, without loss of generality we may assume that $a\leq 0$.

Let $G\colon \left(-\frac{1}{2},\frac{1}{2}\right)\to\R$ be given by the formula
\begin{equation*}
G(x)=\sum_{k\neq 0,k\in\Z}(F_1\ast F_2)(x+k).
\end{equation*}

By Lemma~\ref{lm:pomo}
\begin{equation}\label{eq:conti}
\lim_{x\to 0}\left(\frac{d^n}{dx^n}G(x)-\frac{d^n}{dx^n}G(-x)\right)=0
\end{equation}
for $n\geq 0$.

Let 
\begin{equation*}
W(x):=(\cdot \text{ mod }1)_{\ast}(F_1\ast F_2)(x)=(F_1\ast F_2)(x)+G(x).
\end{equation*}
Then for $k\geq 0$ we have
\begin{multline}\label{eq:raz}
\lim_{x\to 0^+}\left(\left(\frac{d^k}{dx^k}W(x)-\frac{d^k}{dx^k}W(-x)\right)\right.\\
\left.-\left(\frac{d^k}{dx^k}(F_1\ast F_2)(x)+\frac{d^k}{dx^k}(F_1\ast F_2)(-x)\right)\right)=0.
\end{multline}
For $x>0$, by Proposition~\ref{stw:deriv} we have
\begin{multline}\label{eq:dwa}
\frac{d^k}{dx^k}(F_1\ast F_2)(x)-\frac{d^k}{dx^k}(F_1\ast F_2)(-x)\\
=\int_{x}^{\infty}F_1(y)\left(\frac{d^k}{dx^k}F_2(x-y)-\frac{d^k}{dx^k}F_2(-x-y) \right)\ dy\\
-\int_{0}^{x}F_1(y)\frac{d^k}{dx^k}F_2(-x-y)\ dy- \sum_{l=0}^{k-1}\frac{d^{k-l-1}}{dx^{k-l-1}}F_1(x) \frac{d^l}{dx^l}F_2(0).
\end{multline}
By uniform continuity of the function $\frac{d^k}{dx^k}F_2$ and by the integrability of the function $F_1$ we obtain
\begin{equation}\label{eq:trzy}
\lim_{x\to 0^+}\left(\int_{x}^{\infty}F_1(y)\left(\frac{d^k}{dx^k}F_2(x-y)-\frac{d^k}{dx^k}F_2(-x-y) \right)\ dy\right)=0. 
\end{equation}
Analogously, by analicity of the function $F_2$ at zero and by the integrability of $F_1$ we have
\begin{equation}\label{eq:cztery}
\lim_{x\to 0^+}\left(\int_{0}^{x}F_1(y)\frac{d^k}{dx^k}F_2(-x-y)\ dy\right)=0.
\end{equation}

Let $\widetilde{A}_0=0$ and suppose that for some $k\geq 1$ we already know $\widetilde{A}_0,\dots,\widetilde{A}_{k-1}$. We will show how to find $\widetilde{A}_k$. Consider first the case $a=0$. Let
\begin{equation*}
\overline{A}_k=\lim_{x\to 0^+}\left(\frac{d^k}{dx^k}W(x)-\frac{d^k}{dx^k}W(-x) \right)
\end{equation*}
Using~\eqref{eq:raz},~\eqref{eq:dwa},~\eqref{eq:trzy} and~\eqref{eq:cztery} we obtain
\begin{multline*}
\overline{A}_k=\lim_{x\to 0^+}\left(- \sum_{l=0}^{k-1}\frac{d^{k-l-1}}{dx^{k-l-1}}F_1(x) \frac{d^l}{dx^l}F_2(0)\right)\\
=-\sum_{l=0}^{k-1}B_{k-l-1}C_{l}\Gamma(k-l)\Gamma(l+1)\\
=-\sum_{l=0}^{k-1}B_{k-l-1}C_l\frac{\Gamma(l+1)\Gamma(k-l)}{\Gamma(k+1)}\cdot\Gamma(k+1)=\widetilde{A}_k\cdot\Gamma(k+1).
\end{multline*}
Hence $\widetilde{A}_k=\frac{1}{\Gamma(k+1)}\cdot \overline{A}_k$.

Let now $a\neq 0$ and let
\begin{multline*}
\overline{A}_{k}=\lim_{x\to 0^+}\left(x^{-a}\cdot\left(\frac{d^{k}}{dx^{k}}W(x)-\frac{d^{k}}{dx^{k}}W(-x)\right)\right.\\
\left.-\sum_{r=1}^{k}\widetilde{A}_{k-r}\frac{\Gamma(k+a-r+1)}{\Gamma(a-r+1)} x^{-r} \right).
\end{multline*}
Using~\eqref{eq:raz},~\eqref{eq:dwa},~\eqref{eq:trzy} and~\eqref{eq:cztery} we obtain
\begin{multline*}
\overline{A}_k=\lim_{x\to 0^+}\left(x^{-a}\cdot\left(- \sum_{l=0}^{k-1}\frac{d^{k-l-1}}{dx^{k-l-1}}F_1(x) \frac{d^l}{dx^l}F_2(0)\right)\right.\\
\left.-\sum_{r=1}^{k}\widetilde{A}_{k-r}\frac{\Gamma(k+1-r+1)}{\Gamma(a-r+1)} x^{-r} \right)\\
=\lim_{x\to 0^+}\left(x^{-a}\cdot\left(- \sum_{l=0}^{k-1}\frac{d^{l}}{dx^{l}}F_1(x) \frac{d^{k-l-1}}{dx^{k-l-1}}F_2(0)\right)\right.\\
\left.-\sum_{r=1}^{k}\widetilde{A}_{k-r}\frac{\Gamma(k+a-r+1)}{\Gamma(a-r+1)} x^{-r} \right).
\end{multline*}
For $x>0$ sufficiently small, $s$-th derivative of the function $F_1$ is given by the formula
$$
\frac{d^s}{dx^s}F_1(x)=\sum_{n=0}^{\infty}B_n\frac{\Gamma(n+a+1)}{\Gamma(n+a-(s-1))}x^{n+a-s}.
$$
Therefore
\begin{multline}\label{eq:starr}
\overline{A}_k=\lim_{x\to 0^+}\left(-\sum_{l=0}^{k-1} \left(\sum_{n=0}^{\infty}B_n\frac{\Gamma(n+a+1)}{\Gamma(n+a-(l-1))}x^{n-l} \right)\Gamma(k-l)C_{k-l-1}\right.\\
\left.- \sum_{r=1}^{k}\widetilde{A}_{k-r}\frac{\Gamma(a+1)}{\Gamma(a-r+1)}x^{-r}\right).
\end{multline}
The coefficient in front of $x^{-r}$ in the expression
$$
-\sum_{l=0}^{k-1} \left(\sum_{n=0}^{\infty}B_n\frac{\Gamma(n+a+1)}{\Gamma(n+a-(l-1))}x^{n-l} \right)\Gamma(k-l)C_{k-l-1}
$$
is equal to zero for $r\geq k$ and for $1\leq r\leq k-1$ it is equal to
\begin{multline*}
-\sum_{l=r}^{k-1}B_{l-r}C_{k-l-1}\frac{\Gamma(l-r+a+1)\Gamma(k-l)}{\Gamma(-r+a+1)}\\
=-\sum_{l=0}^{k-r-1}B_{k-r-l-1}C_l\frac{\Gamma(k-r-l+a)\Gamma(l+1)}{\Gamma(k-r+a+1)}\cdot\frac{\Gamma(k-r+a+1)}{-r+a+1}\\
=\widetilde{A}_{k-r}\cdot\frac{\Gamma(k-r+a+1)}{\Gamma(-r+a+1)}
\end{multline*}
Since for $k=r$ we have $\widetilde{A}_{k-r}\cdot\frac{\Gamma(k+a-r+1)}{\Gamma(a-r+1)}x^{-r}=0=\widetilde{A}_0$, by~\eqref{eq:starr} we obtain further
$$
\overline{A}_k=\widetilde{A}_k\cdot\frac{\Gamma(k+a+1)}{\Gamma(a+1)},
$$
whence $\widetilde{A}_k=\frac{\Gamma(a+1)}{\Gamma(k+a+1)}\cdot \overline{A}_k$, which ends the proof.
\end{proof}

\begin{uw}\label{uw:na2}
Let the functions $F_1, F_2\in L^1(0,\infty)$ be such that:
\begin{itemize}
\item
in the right-hand side neighbourhood of zero $F_1(x)=x^a\widetilde{F}_1(x)$, where $\widetilde{F}_1$ is analytic at zero and $\widetilde{F}_1(x)=\sum_{n=0}^{\infty}B_nx^n$ in the right-hand side neighbourhood of zero,
\item
$F_2$ is analytic at zero and $F_2(x)=\sum_{n=0}^{\infty}C_nx^n$ in the right-hand side neighbourhood of zero,
\item
$F_1$ and $F_2$ are smooth,
\item
all the derivatives of the functions $F_1,F_2$ and $F_1\ast F_2$ decay ``sufficiently fast''.
\end{itemize}
Then the numbers
\begin{equation}\label{eq:aeny}
A_n=\sum_{k=0}^{n}C_kB_{n-k}\frac{\Gamma(k+1)\Gamma(a+n-k+1)}{\Gamma(n+a+2)}
\end{equation}
are the coefficients in the series expansion of $F_1\ast F_2$ in the right-hand side neighbourhood of zero:
\begin{equation*}
F_1\ast F_2(x)=\sum_{n=0}^{\infty}A_nx^{n+a+1}.
\end{equation*}
\end{uw}

\begin{wn}\label{wn:222}
Let the functions $F_1$ and $F_2$ be such that:
\begin{itemize}
\item
$F_1,F_2 \in L^1(0,\infty)$ or $F_1\in L^1(-\infty,0)$, $F_2\in L^1(0,\infty)$,
\item
in the neighbourhood of zero (right-hand side or left-hand side - depending on the domain) $F_1(x)=x^a\widetilde{F}_1(x)$, where $a>-1$, the function $\widetilde{F}_1$ is analytic at zero with the series expansion around zero given by $\widetilde{F}_1(x)=\sum_{n=0}^{\infty}B_nx^n$,
\item
$F_2$ is analytic at zero with the series expansion around zero given by $F_2(x)=\sum_{n=0}^{\infty}C_nx^n$,
\item
$F_1$ and $F_2$ are smooth,
\item
all the derivatives of the functions $F_1,F_2$ and $F_1\ast F_2$ decay ``sufficiently fast''.
\end{itemize}
Then $(\cdot \text{ mod }1)_{\ast}(F_1\ast F_2)$ determines a pair of sequences $\left\{(A_n)_{n\in\N},(-A_n)_{n\in\N} \right\}$,
where $A_n$ for $n\geq 0$ is given by formula~\eqref{eq:aeny}.
\end{wn}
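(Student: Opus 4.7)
The plan is to do a case analysis on the supports. The hypotheses allow exactly two situations: \emph{Case 1}, when both $F_1, F_2 \in L^1(0,\infty)$, and \emph{Case 2}, when $F_1 \in L^1(-\infty,0)$ and $F_2 \in L^1(0,\infty)$. In either case I would reduce to one of the two propositions just proved (Proposition~\ref{pr:nawijanie1} or Proposition~\ref{pr:nawijanie2}) and read off the corresponding sequence.

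In Case 1 the convolution $F_1\ast F_2$ is again supported on $(0,\infty)$ and, by Remark~\ref{uw:na2}, admits the series expansion
\begin{equation*}
F_1\ast F_2(x)=x^{a+1}\sum_{n\geq 0}A_n x^n
\end{equation*}
in the right-hand side neighbourhood of zero, with $A_n$ given by~\eqref{eq:aeny} and the tail series analytic at zero. Since $a+1>-1$ and the hypotheses supply exactly the smoothness and sufficiently fast decay of derivatives required by Proposition~\ref{pr:nawijanie1} for the function $F_1 \ast F_2$ itself, I would apply that proposition to $F_1 \ast F_2$ (with the exponent $a+1$ playing the role of $a$) to conclude that $(\cdot\,\text{mod}\,1)_\ast(F_1\ast F_2)$ determines the sequence $(A_n)$, which is one of the two sequences in the announced pair.

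In Case 2 I would first pass to the reflected functions $\check F_i(x):=F_i(-x)$. Then $\check F_1\in L^1(0,\infty)$ inherits the singularity of the form $x^a\widetilde{\check F}_1(x)$ with $\widetilde{\check F}_1$ analytic at zero, while $\check F_2\in L^1(-\infty,0)$ is analytic at zero; the pair $(\check F_1,\check F_2)$ therefore satisfies the hypotheses of Proposition~\ref{pr:nawijanie2}. A change of variables $y\mapsto -y$ in the convolution integral yields $\check F_1\ast\check F_2(x)=F_1\ast F_2(-x)$, so $(\cdot\,\text{mod}\,1)_\ast(\check F_1\ast\check F_2)$ is recovered from $(\cdot\,\text{mod}\,1)_\ast(F_1\ast F_2)$ by the involution $x\mapsto -x$. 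Proposition~\ref{pr:nawijanie2} then delivers the values of the expressions $\widetilde A_n$ from~\eqref{eq:wsprozne} for the reflected coefficients. Translating back via the Taylor expansions of $F_i(-\cdot)$ and accounting for the overall minus sign in~\eqref{eq:wsprozne}, one obtains the sequence $(-A_n)$, completing the other half of the pair.

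The main (essentially bookkeeping) obstacle is Case 2: one has to check that the Taylor coefficients of $\widetilde{\check F}_1$ and of $\check F_2$ are the correct transforms of $(B_n)$ and $(C_n)$, and that the combination appearing in~\eqref{eq:wsprozne}, after substitution, collapses up to a uniform sign to formula~\eqref{eq:aeny}. Since the corollary only asks for the \emph{unordered} pair $\{(A_n),(-A_n)\}$, such a uniform sign is enough, and the precise convention chosen for expanding $F_1$ in the left-hand neighbourhood of zero does not affect the final statement. Combining both cases then yields that $(\cdot\,\text{mod}\,1)_\ast(F_1\ast F_2)$ always determines this unordered pair.
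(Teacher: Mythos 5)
Your Case~1 is exactly the paper's argument: apply Proposition~\ref{pr:nawijanie1} to $F=F_1\ast F_2$, whose expansion coefficients are identified by Remark~\ref{uw:na2}. The gap is in Case~2, and it sits precisely in the step you defer as ``bookkeeping''. Reflect and expand, as Proposition~\ref{pr:nawijanie2} requires, in powers of the signed variable: if $F_1(x)=|x|^a\sum_n B_nx^n$ on $(-\infty,0)$ and $F_2(x)=\sum_n C_nx^n$ on $(0,\infty)$, then $\check F_1(x)=F_1(-x)$ and $\check F_2(x)=F_2(-x)$ have coefficients $\check B_n=(-1)^nB_n$ and $\check C_n=(-1)^nC_n$, so formula~\eqref{eq:wsprozne} applied to the reflected pair returns
\[
-\sum_{k=0}^{n}\check B_{n-k}\check C_k\frac{\Gamma(k+1)\Gamma(a+n-k+1)}{\Gamma(n+a+2)}=(-1)^{n+1}A_n,
\]
an alternating sign, not a uniform one; with the other natural convention ($B_n$ taken as coefficients in powers of $|x|$) one gets $-\sum_{k}(-1)^kB_{n-k}C_k\Gamma(k+1)\Gamma(a+n-k+1)/\Gamma(n+a+2)$, which is neither $A_n$ nor $-A_n$. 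Either way the sequence your Case~2 produces need not belong to $\{(A_n),(-A_n)\}$, and the remark that the convention ``does not affect the final statement'' cannot repair this, since $A_n$ in~\eqref{eq:aeny} is built from exactly those coefficients. The sign that would rescue the computation is the factor $(-1)^{k+1}$ by which the antisymmetrized data $W^{(k)}(x)-W^{(k)}(-x)$ of $W$ and of $\check W(x)=W(-x)$ differ; your reduction throws it away by running the extraction on $\check W$ rather than on $W$ (for $a=0$, which is the case actually fed into Corollary~\ref{wn:wnnnn}, the two alternating signs cancel, but only if one works with $W$ itself).

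The paper's own route is different and avoids reflection altogether: it observes that the inductive extraction in Propositions~\ref{pr:nawijanie1} and~\ref{pr:nawijanie2} is given by literally the same expressions~\eqref{eq:je} and~\eqref{eq:jeje}, applies this single procedure to $W=(\cdot \text{ mod }1)_{\ast}(F_1\ast F_2)$, and invokes Proposition~\ref{pr:nawijanie1} (for $F=F_1\ast F_2$, via Remark~\ref{uw:na2}) in one configuration and Proposition~\ref{pr:nawijanie2} in the other, the outputs being $(A_n)$ and $(-A_n)$ respectively; the genuinely mirrored configurations are relegated to Remark~\ref{uw:222}. This case-independence of the recovery map is not cosmetic: in Corollary~\ref{wn:wnnnn} one does not know the signs of the $t_i$, hence which configuration obtains, so one needs one procedure on $W$ whose output is guaranteed to lie in $\{(A_n),(-A_n)\}$. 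Your case split, with two different recovery maps, does not by itself give such a well-defined output, so Case~2 requires either the paper's uniform argument or an explicit verification of the reflected coefficient identities, which, as written, do not come out with a uniform sign.
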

\begin{proof}
Notice that the proofs of Propositions~\ref{pr:nawijanie1} and~\ref{pr:nawijanie2} go along the same lines. We argue by induction - in the consecutive steps for $k\in\N$ we calculate the right-hand side limit as zero of the expressions of the following form:
\begin{equation}\label{eq:je}
x^{-a}\left(\frac{d^k}{dx^k}W(x)-\frac{d^k}{dx^k}W(-x)\right)-\sum_{r=1}^{k}{A}_{k-r}\frac{\Gamma(k+a-r+1)}{\Gamma(a-r+1)} x^{-r} ,
\end{equation} 
where $W(x)=\sum_{k\in \Z}F(x+k)$ (in Proposition~\ref{pr:nawijanie1}) or
\begin{equation}\label{eq:jeje}
x^{-a}\left(\frac{d^k}{dx^k}W(x)-\frac{d^k}{dx^k}W(-x)\right)-\sum_{r=1}^{k}\widetilde{A}_{k-r}\frac{\Gamma(k+a-r+1)}{\Gamma(a-r+1)} x^{-r} ,
\end{equation}
where $W(x)=\sum_{k\in \Z}F_1\ast F_2(x+k)$ (in Proposition~\ref{pr:nawijanie2}).

Using Proposition~\ref{pr:nawijanie1} for the functions $F=F_1\ast F_2$, by Remark~\ref{uw:na2} it follows that as the limit of the expression~\eqref{eq:je} one obtains the consecutive numbers $A_n$, whereas as the limit of the expression~\eqref{eq:jeje}, the consecutive numbers $-A_n$. Therefore, independent of whether $F_1\in L^1(-\infty,0)$ or $F_1\in L^1(0,\infty)$, the function $(\cdot \text{ mod }1)_{\ast}(F_1\ast F_2)$ determines the pair of sequences $\left\{(A_n)_{n\in\N},(-A_n)_{n\in\N} \right\}$.
\end{proof}

\begin{uw}\label{uw:222}
The assertion of Corollary~\ref{wn:222} remains true also in the following situations (the rest of the assumptions and the notation remain unchanged):
\begin{itemize}
\item
$F_1\in L^1(-\infty,0)$, $F_2\in L^1(0,\infty)$,
\item
$F_1,F_2 \in L^1(-\infty,0)$.
\end{itemize}
\end{uw}

\paragraph{Application for the rescaled functions}
\begin{wn}\label{wn:wnnnn}
Let $d\geq 1$ be even, $t_1,\dots,t_d\in\R\setminus\{0\}$ and let the function $F\in L^1(0,\infty)$ be such that:
\begin{itemize}
\item
in the right-hand side neighbourhood of zero $F(x)=x^{-1/2}\widetilde{F}(x)$, where the function $\widetilde{F}$ is analytic at zero with the series expansion around zero given by $\widetilde{F}(x)=\sum_{n=0}^{\infty}B_nx^n$,
\item
the function $F$ is smooth,
\item
for any $1\leq k\leq d$ and for any subset $\{s_1,\dots,s_k\}\subset \{t_1,\dots,t_d\}$ such that $\{s_1,\dots,s_k\}\subset (0,\infty)$ or $\{s_1,\dots,s_k\}\subset (-\infty,0)$
all the derivatives of the function $F_{s_1}\ast \dots \ast F_{s_k}$ decay ``sufficiently fast'',
\item
all the derivatives of the function $F_{t_1}\ast \dots \ast F_{t_d}$ decay ``sufficiently fast''.
\end{itemize}
Then the function $(\cdot \text{ mod }1)_{\ast}(F_{t_1}\ast\dots\ast F_{t_d})$ determines a pair of sequences $\left\{(A_n)_{n\in\N},(-A_n)_{n\in\N} \right\}$,
where the numbers $A_n$, $n\geq 0$, are given by
$$
A_n:=\frac{1}{\sqrt{|t_1\cdot\ldots\cdot t_d|}}\cdot\sum_{k_1+\dots+k_d=n}\prod_{i=1}^{d}\frac{a_{k_i}\Gamma(k_i+1/2)}{t_i^{k_i}}\frac{1}{\Gamma(n+d/2)}.
$$
\end{wn}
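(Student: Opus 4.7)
The plan is to split $\{t_1,\dots,t_d\}$ by sign, use Corollary~\ref{wn:9} to control the series expansions at $0$ of the two same-sign sub-convolutions, and then extract the coefficients from $(\cdot\text{ mod }1)_\ast$ via the recovery machinery of Section~\ref{p343}. Denote the positive $t_i$'s by $s_1^+,\dots,s_p^+$ and the negative ones by $s_1^-,\dots,s_q^-$, and set
$$
F_+=F_{s_1^+}\ast\dots\ast F_{s_p^+}\in L^1(0,\infty),\qquad F_-=F_{s_1^-}\ast\dots\ast F_{s_q^-}\in L^1(-\infty,0),
$$
with the convention that an empty convolution is $\delta_0$. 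Then $F_{t_1}\ast\dots\ast F_{t_d}=F_+\ast F_-$, and Corollary~\ref{wn:9} (equations~\eqref{eq:wzorparz} and~\eqref{eq:wzorparz2}) furnishes explicit series expansions of $F_\pm$ at $0^\pm$ in terms of the $a_n$'s and the $t_i$'s; since $d$ is even, $p$ and $q$ have the same parity.

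If $p=0$ or $q=0$, the full convolution is supported on a single half-line and, by the remark following Corollary~\ref{wn:9}, is analytic at $0$ on that half-line with coefficients equal to $A_n$ up to the global sign $(-1)^{(d-2)/2}$ of~\eqref{eq:wzorparz2}; Proposition~\ref{pr:nawijanie1} (or its mirror on $L^1(-\infty,0)$, obtained by the reflection $x\mapsto -x$) recovers this coefficient sequence from $(\cdot\text{ mod }1)_\ast$, which in particular determines the pair $\{(A_n),(-A_n)\}$. If both $p,q\geq 1$ are even, the same remark gives analyticity of $F_\pm$ at $0^\pm$ on their respective sides, so the hypotheses of Remark~\ref{uw:222} are met with $a=0$; the conclusion of Corollary~\ref{wn:222} is precisely the pair $\{(A_n),(-A_n)\}$, where (by Remark~\ref{uw:na2} iterated via Proposition~\ref{pr:dwie}) the extracted $A_n$'s match those in the statement.

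The remaining and hardest case is $p,q\geq 1$ both odd, where each of $F_+,F_-$ retains a genuine $|\cdot|^{-1/2}$-type factor at $0$ on its half-line. I would handle it by adapting the inductive scheme of Proposition~\ref{pr:nawijanie2} to this two-sided singular setting: at step $k$, using Proposition~\ref{stw:deriv} (extended so that the ``analytic at $0$'' hypothesis on $F_2$ is replaced by $F_-(x)=(-x)^{-1/2}\widetilde F_-(x)$ with $\widetilde F_-$ analytic) to compute $\tfrac{d^k}{dx^k}(F_+\ast F_-)$ separately on $(0,\infty)$ and $(-\infty,0)$, then using Lemma~\ref{lm:pomo} to discard the tail $\sum_{k\neq 0}(F_+\ast F_-)(x+k)$, and finally matching the expansion of $W(x)=(\cdot\text{ mod }1)_\ast(F_+\ast F_-)$ near $0$ against the asymptotic series of $F_+\ast F_-$, which here contains both power and logarithmic terms arising from the convolution of the two $|x|^{-1/2}$ factors (already for $d=2$, $p=q=1$, one has $F_+\ast F_-(x)\sim-\frac{a_0^2}{\sqrt{|t_1t_2|}}\ln|x|$). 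The inductive output is again the sequence $A_n$, determined up to the global sign, giving the pair $\{(A_n),(-A_n)\}$. The main obstacle is precisely this extension of Proposition~\ref{pr:nawijanie2}: one must carefully track the logarithmic corrections that emerge from convolving two $|x|^{-1/2}$ singular factors on complementary half-lines, and verify that the combinatorial coefficients appearing alongside them align with the $A_n$'s of the statement. Once this is done, the three cases together deliver the conclusion.
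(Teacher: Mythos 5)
Your sign-splitting strategy and your first two cases coincide with the paper's own machinery (Corollary~\ref{wn:9} for the block expansions, Proposition~\ref{pr:nawijanie1} for the one-sided case, Corollary~\ref{wn:222} with Remark~\ref{uw:222}, and the coefficient matching via Remark~\ref{uw:na2} and~\eqref{eq:rdrd}), but your third case is a plan rather than a proof, and that is a genuine gap. When $d$ is even and both signs occur, the two sign-groups necessarily have equal parity, and in the both-odd configuration (already $d=2$ with $t_1>0>t_2$) each block retains an $|x|^{-1/2}$ factor at $0$, so none of Proposition~\ref{pr:nawijanie1}, Proposition~\ref{pr:nawijanie2}, Corollary~\ref{wn:222} or Remark~\ref{uw:222} applies: each of them requires one of the two convolved factors to be analytic at zero. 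You correctly observe that $F_+\ast F_-$ then develops logarithmic singularities, but the extension of Proposition~\ref{pr:nawijanie2} that would track those terms, and the verification that what can still be read off from $(\cdot \text{ mod }1)_{\ast}(F_+\ast F_-)$ is exactly the stated sequence $(A_n)$ up to sign, are precisely what is missing; the local expansion is no longer of the pure power form $\sum_n A_n x^{n+a+1}$ on which the whole inductive recovery scheme of Section~\ref{p343} rests. As written, the hardest case of the statement is left open, so the proposal does not prove the corollary as formulated.

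For comparison, the paper's own proof does not treat that configuration either: it assumes the negative block has even cardinality and expands it by~\eqref{eq:wzorparz2}, expands the positive block by the odd-cardinality formula~\eqref{eq:wzorparz}, and then applies Corollary~\ref{wn:222} and Remark~\ref{uw:222} with $a=-1/2$. Blocks of different parity can only occur when $d$ is odd (or when one block is empty), which is consistent with the outline of the proof of Proposition~\ref{tw:nowametoda} (``$d\neq 1$ is odd'') and with the application $d=3$; the hypothesis ``$d$ even'' in the statement is evidently a slip. So you have in fact isolated exactly the case that the printed hypothesis forces and that the paper's toolkit does not cover; but to turn your proposal into a proof you must either carry out the logarithmic analysis you sketch, or prove the result under the intended mixed-parity hypothesis, in which case your argument reduces to the same application of Corollary~\ref{wn:222} with $a=-1/2$ that the paper uses and that you invoke only with $a=0$.
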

\begin{proof}
Without loss of generality we may assume that $t_1\leq t_2\leq \dots \leq t_d$. Let $0\leq i_0\leq d$ be such that $t_1\leq \dots \leq t_{i_0}<0<t_{i_0+1}\leq \dots \leq t_d$. To fix attention, assume that $i_0$ is even (in the case where $i_0$ is odd the proof is similar). Using the formulas~\eqref{eq:wzorparz} and~\eqref{eq:wzorparz2} in Corollary~\ref{wn:9} we find the series expansions of the function $h_{t_1}\ast \dots \ast h_{t_{i_0}}$ and $h_{t_{i_0+1}}\ast \dots \ast h_{t_d}$. Denote the coefficients appearing in these equations by $(B_n)_{n\in \N}$ and $(C_n)_{n\in \N}$ respectively. By Corollary~\ref{wn:222} (and by Remark~\ref{uw:222}) it follows that we can determine the consecutive elements of the sequence $(A_n)_{n\in\N}$ or $(-A_n)_{n\in\N}$, where
\begin{equation*}
A_n=\sum_{k=0}^{n}C_kB_{n-k}\frac{\Gamma(k+1)\Gamma(a+n-k+1)}{\Gamma(n+a+2)}.
\end{equation*}
The claim follows by Remark~\ref{uw:na2} and formula~\eqref{eq:rdrd} in Corollary~\ref{wn:9}.\footnote{Remark~\ref{uw:na2} is not necessary for completing this proof. Once we have determined the $(B_n)_{n\in\N}$ and $(C_n)_{n\in\N}$ it can be checked directly that the numbers $(A_n)_{n\in\N}$ in this case are (up to the sign) the same as the coefficients in the expansion~\eqref{eq:rdrd}.}
\end{proof}

\begin{wn}\label{wn:wielomianytu}
Under the assumptions of the above corollary, assuming additionally that $a_0\neq 0$,  the function $(\cdot \text{ mod }1)_{\ast}(F_{t_1}\ast\dots\ast F_{t_d})$ determines the sequence $(B_n)_{n\in\N}$ given by
\begin{equation}\label{eq:liczbybn}
B_n=\sum_{k_1+\dots+k_d=n}\prod_{i=1}^{d}\frac{a_{k_i}\Gamma(k_i+1/2)}{t_i^{k_i}}.
\end{equation}
\end{wn}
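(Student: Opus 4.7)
The plan is to bootstrap directly from Corollary~\ref{wn:wnnnn}. That corollary tells us that the function $(\cdot \text{ mod }1)_{\ast}(F_{t_1}\ast\dots\ast F_{t_d})$ determines the pair of sequences $\{(A_n)_{n\in\N}, (-A_n)_{n\in\N}\}$, where
$$
A_n=\frac{1}{\sqrt{|t_1\cdot\ldots\cdot t_d|}\,\Gamma(n+d/2)}\cdot B_n,
$$
with $B_n$ the number defined by~\eqref{eq:liczbybn}. Since the factor $\sqrt{|t_1\cdot\ldots\cdot t_d|}\,\Gamma(n+d/2)$ is a known strictly positive quantity depending only on $n$, $d$ and the fixed parameters $t_1,\dots,t_d$, the coordinate-wise rescaling $A_n\mapsto \sqrt{|t_1\cdot\ldots\cdot t_d|}\,\Gamma(n+d/2)\,A_n$ converts the pair $\{(A_n), (-A_n)\}$ into the pair $\{(B_n), (-B_n)\}$. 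So $(\cdot \text{ mod }1)_{\ast}(F_{t_1}\ast\dots\ast F_{t_d})$ determines this latter pair.

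All that remains is to remove the global sign ambiguity, and this is precisely where the additional assumption $a_0\neq 0$ comes in. Specializing formula~\eqref{eq:liczbybn} at $n=0$ gives $B_0=\prod_{i=1}^{d}a_0=a_0^d$, which, because $d$ is even (this is part of the hypotheses inherited from Corollary~\ref{wn:wnnnn}) and $a_0$ is nonzero, is strictly positive. Thus exactly one of the two sequences $(B_n)_{n\in\N}$, $(-B_n)_{n\in\N}$ has a positive zeroth term, and we select that one as $(B_n)_{n\in\N}$. The main (and essentially only) obstacle is this sign normalization, and the hypothesis $a_0\neq 0$ together with the parity of $d$ is exactly what makes it painless; everything else is just the rescaling observation above applied to Corollary~\ref{wn:wnnnn}.
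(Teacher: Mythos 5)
Your sign-normalization step is fine (and is essentially the paper's: since $d$ is even and $a_0\neq 0$, the zeroth term is strictly positive, which picks the right member of the pair $\{(A_n),(-A_n)\}$; note only that $B_0=a_0^d(\Gamma(1/2))^d$, not $a_0^d$, though this is harmless for the sign). The genuine gap is in your rescaling step: you pass from $A_n$ to $B_n=\sqrt{|t_1\cdot\ldots\cdot t_d|}\,\Gamma(n+d/2)\,A_n$ by declaring the factor $\sqrt{|t_1\cdot\ldots\cdot t_d|}$ a ``known'' quantity because it depends on ``the fixed parameters $t_1,\dots,t_d$''. But $t_1,\dots,t_d$ are precisely the unknowns: in the proof of Proposition~\ref{tw:nowametoda} this corollary is invoked to show that the pushforward $(\cdot \text{ mod }1)_{\ast}(F_{t_1}\ast\dots\ast F_{t_d})$ alone (together with the fixed, known $F$) determines the numbers $B_n$, which in turn determine the set $\{t_1,\dots,t_d\}$. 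If you are allowed to treat $t_1,\dots,t_d$ (hence $\sqrt{|t_1\cdots t_d|}$) as known data, the statement becomes circular, and your argument does not show that two different tuples producing the same pushforward must yield the same $B_n$'s, since their scale factors could a priori differ.

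The missing observation, which is how the paper closes this gap, is that the scale factor is itself recoverable from the data: specializing Corollary~\ref{wn:wnnnn} at $n=0$ gives
\begin{equation*}
A_0=\frac{1}{\sqrt{|t_1\cdot\ldots\cdot t_d|}}\cdot a_0^d\cdot\bigl(\Gamma(1/2)\bigr)^d\cdot\frac{1}{\Gamma(d/2)},
\end{equation*}
and since $a_0\neq 0$ is known from the fixed function $F$, the value $A_0$ (whose sign you have already pinned down) determines $\sqrt{|t_1\cdot\ldots\cdot t_d|}=a_0^d(\Gamma(1/2))^d/\bigl(\Gamma(d/2)\,A_0\bigr)$. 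Only after this recovery may you legitimately multiply each $A_n$ by $\sqrt{|t_1\cdot\ldots\cdot t_d|}\,\Gamma(n+d/2)$ to obtain $(B_n)_{n\in\N}$ from the pushforward alone. With that one extra line your argument matches the paper's proof.
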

\begin{proof}
Notice that 
$$
A_0=\frac{1}{\sqrt{|t_1\cdot\ldots\cdot t_d|}}\cdot a_0^d\cdot (\Gamma(1/2))^d\cdot \frac{1}{\Gamma(d/2)}.
$$
Knowing the function $F$, we also know the number $a_0$, which gives us the sign of $A_0$. By Corollary~\ref{wn:wnnnn} the sequence $(A_n)_{n\in\N}$ and thus also $(B_n)_{n\in\N}$ is determined.
\end{proof}
Notice that the assumptions that $a_0\neq 0$ in the above corollary can be weakened. It suffices to find the largest  $i_0\in\N$ such that $a_0=a_1=\dots=a_{i_0}$ i check the sign of $a_{i_0+1}$.

\subsubsection{Fast decay of all of the derivatives}\label{se:5.2.4}

In the previous section an important role was played by functions whose derivatives decay ``sufficiently fast''. We will now introduce a property which will be easier to check and which will be preserved under taking convolutions. It will allow us to formulate a criterion ensuring the needed ``sufficienlty fast'' decay.

Recall that the conditions $\mathcal{W},\mathcal{W}_1,\mathcal{W}_2$ describe the ``sufficiently fast'' decay (see Definition~\ref{df:szybkispadek} on page~\pageref{df:szybkispadek}).

We will write  $\mathcal{W}(F)$ for a function $F$ enjoying property $\mathcal{W}$. To underline which constants $A>1$, $t>0$ and $r\in(-1,0)$ we mean we will sometimes write $\mathcal{W}(F,A)$, $\mathcal{W}(F,t)$, $\mathcal{W}(F,A,r)$, $\mathcal{W}(F,A,t)$ or $\mathcal{W}(F,A,r,t)$. Sometimes we will also write $\mathcal{W}_1(F)$ or $\mathcal{W}_2(F)$ to express the fact that one of the conditions from the definition of the property $\mathcal{W}$ holds.

\begin{lm}\label{stw:jjj}
Let the function $F\colon (-\infty,0)\cup (0,\infty)\to \R$ be such that for some $n\in\N$ the function $x\mapsto x^n \cdot F(x)$ has property $\mathcal{W}_1$. Then for some $x_0>0$ there exists a non-increasing function $H\colon (x_0,\infty)\to\infty$  such that 
\begin{equation*}
H\in L^1(x_0,\infty)\text{ and }|F(x)|<H(|x|)\text{ for }|x|>x_0. 
\end{equation*}
In particular, if the function $F$ is smooth and all its derivatives, i.e. $\frac{d^k}{dx^k}F$, are such that the functions $x\mapsto x^{n_k}\cdot \frac{d^k}{dx^k}F(x)$ have property $\mathcal{W}_1$ for some $n_k\in\N$, then all the derivatives of the function $F$ decay ``sufficiently fast''.\footnote{The definition of the function whose all the derivatives decay ``sufficiently fast'' can be found in Section~\ref{p343}, on page~\pageref{def:dostszyb}.}
\end{lm}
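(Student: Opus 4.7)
The plan is to unpack the definition of $\mathcal{W}_1$ and construct the majorant $H$ as an explicit elementary function; this is essentially a routine estimate once one multiplies the hypothesis by $|x|^{-n}$. From the hypothesis applied to $x\mapsto x^n F(x)$ I would extract constants $A>1$ and $t\geq 0$ with $|x^n F(x)|<Ae^{-|x|/A}$ for $|x|>t$. Setting $x_0:=\max(t,1)$ and dividing by $|x|^n$ then yields $|F(x)|<Ae^{-|x|/A}/|x|^n$ for every $|x|>x_0$.

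The natural candidate is therefore $H(y):=Ae^{-y/A}/y^n$ on $(x_0,\infty)$. I would verify the three required properties in turn: the bound $|F(x)|<H(|x|)$ is immediate from the previous display; monotonicity follows because $H$ is a product of two positive non-increasing functions on $(0,\infty)$, namely $y\mapsto Ae^{-y/A}$ and $y\mapsto y^{-n}$; integrability follows from the estimate
\[
\int_{x_0}^{\infty}Ae^{-y/A}y^{-n}\,dy\leq Ax_0^{-n}\int_{x_0}^{\infty}e^{-y/A}\,dy<\infty.
\]

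For the ``in particular'' clause I would simply apply the first part of the lemma separately to each derivative $\frac{d^k}{dx^k}F$ with the given exponent $n_k$, producing for every $k$ a threshold $x_k>0$ and a non-increasing function $H_k\in L^1(x_k,\infty)$ that majorises $\bigl|\frac{d^k}{dx^k}F\bigr|$ outside $[-x_k,x_k]$. This is precisely the data required by Definition~\ref{def:dostszyb}. The only point needing minor attention is the boundary cases $n=0$ or $t=0$ in the property $\mathcal{W}_1$; the choice $x_0=\max(t,1)$ absorbs both (for $n=0$ the factor $|x|^{-n}$ disappears, and insisting $x_0\geq 1$ keeps $y^{-n}$ bounded and monotone on $(x_0,\infty)$). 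No real obstacle arises: the lemma is essentially the observation that an exponentially small quantity divided by a polynomial is still dominated by a non-increasing integrable function, and this is then iterated across the successive derivatives.
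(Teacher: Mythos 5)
Your proof is correct and follows essentially the same route as the paper: extract the $\mathcal{W}_1$ bound for $x^n F(x)$, set $x_0=\max(t,1)$, and take an explicit non-increasing integrable exponential majorant, then apply this derivative by derivative for the ``in particular'' clause. The only cosmetic difference is that the paper simply uses $H(y)=Ae^{-y/A}$ (absorbing the factor $|x|^{-n}\leq 1$ for $|x|\geq 1$), whereas you keep the sharper majorant $Ae^{-y/A}y^{-n}$; both choices work.
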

\begin{proof}
For $H$ it suffices to take $x\mapsto Ae^{-\frac{|x|}{A}}$ (the number $A$ is the same as in the definition of property $\mathcal{W}$, for $x_0$ we put $\max\{t,1\}$, where $t$ is as in the definition of property $\mathcal{W}$). 
\end{proof}

\begin{uw}
For $1<A<B$, $-1<r<s<0$ and $x\neq 0$ the following inequalities hold:
\begin{itemize}
\item
$e^{-\frac{|x|}{A}}<e^{-\frac{|x|}{B}}$,
\item
$|x|^s=|x|^r\cdot |x|^{s-r}<|x|^r\cdot t^{s-r}$ for $0<|x|<t$.
\end{itemize}
Some of the properties following from the above inequalities will be important for us.
\begin{itemize}
\item
In order to show that the function $F$ has property $\mathcal{W}$, it suffices to find constants $t>0$, $r\in (-1,0)$ and $A_1,A_2,A_3>1$ such that $|g(x)|<A_1 e^{-\frac{x}{A_2}}$ for $x>t$ and $|g(x)|<A_3x^r$ for $0<x<t$. 
\item
Moreover, given a finite family of functions $F_1,\dots,F_m$ ($m\geq 1$) enjoying property $\mathcal{W}$, there exist numbers $A>1$, $r\in(-1,0)$ and $t>0$ such $\mathcal{W}(F_i,A,r,t)$ holds for $1\leq i\leq m$ (a constant $t>0$ common for all the functions exists by their boundedness on compact subsets of $(-\infty,0)\cup(0,\infty)$).
\end{itemize}
\end{uw}

\begin{uw}
If there exists $A>1$ such that for $0<x<t$ we have $|F(x)|<A$, then for $r\in(-1,0)$ there exists $B>1$ such that for $0<x<t$ we have $|F(x)|<Bx^r$.
\end{uw}

\begin{uw}\label{uw:nowauwaga}
If $\mathcal{W}(F)$ holds, then for any $c>0$ the function $F$ is bounded in the set $(-\infty,-c]\cup[c,\infty)$.
\end{uw}

\begin{uw}\label{uw:przemnozyc}
By the convergence $\lim_{x\to\infty} x^n \cdot e^{-\frac{x}{2M}}=0$ for $n,M\in\N$ it follows that whenever $\mathcal{W}_1(F,t)$ holds for some function $F$ and some $t>0$, then also $\mathcal{W}_1(x\mapsto x^n \cdot F(x),t)$ holds for any $n\in\N$.
\end{uw}

\begin{uw}\label{uw:3.21}
Let $F\in L^1(0,\infty)$ be such that for some $n\in\N$ we have $\mathcal{W}(x\mapsto x^n\cdot F(x))$. Then $\lim_{x\to\infty}F(x)=0$ and for any $c>0$ the function $F$ is uniformly continuous on the interval $[c,\infty)$.
\end{uw}

\begin{lm}\label{lm:lemat5}
Let $m\geq 1$ and let $F_1,\dots, F_m \in L^1(0,\infty)$. Assume that $\mathcal{W}(F_i)$ holds for $1\leq i \leq m$. Then also $\mathcal{W}(F_1\ast \dots \ast F_m)$ holds.
\end{lm}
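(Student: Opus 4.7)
The plan is to reduce to the case $m=2$ by induction and then to verify the two conditions $\mathcal{W}_1$ and $\mathcal{W}_2$ separately for $F_1\ast F_2$. By the remark just before this lemma, I may choose common constants $A>1$, $r\in(-1,0)$, $t>0$ such that $\mathcal{W}(F_i,A,r,t)$ holds for every $i$. Young's convolution inequality will give $F_1\ast\cdots\ast F_m\in L^{1}(0,\infty)$; in particular each convolution is supported in $[0,\infty)$, so only positive arguments need to be treated.

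For $\mathcal{W}_1$ in the case $m=2$, I would fix $x>2t$ and split
\[
F_1\ast F_2(x)=\int_{0}^{x/2}F_1(y)F_2(x-y)\,dy+\int_{x/2}^{x}F_1(y)F_2(x-y)\,dy.
\]
On the first integral $x-y\geq x/2>t$, so $\mathcal{W}_1(F_2)$ yields $|F_2(x-y)|\leq Ae^{-x/(2A)}$, bounding the first piece by $Ae^{-x/(2A)}\|F_1\|_{1}$; symmetrically the second is bounded by $Ae^{-x/(2A)}\|F_2\|_{1}$. Taking $A':=\max\{2A,\,A(\|F_1\|_{1}+\|F_2\|_{1})\}$ then gives $|F_1\ast F_2(x)|<A'e^{-x/A'}$ for $x>2t$, which is $\mathcal{W}_1$ for the convolution (with $t$ replaced by $2t$).

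For $\mathcal{W}_2$ in the case $m=2$, for $0<x<t$ both $y$ and $x-y$ stay in $(0,t)$ throughout the integration, so Remark~\ref{lm:calki} gives
\[
|F_1\ast F_2(x)|\leq A^{2}\int_{0}^{x}y^{r}(x-y)^{r}\,dy=A^{2}B(r+1,r+1)\,x^{2r+1}.
\]
I would then split into two cases. If $r\geq-1/2$, then $x^{2r+1}$ stays bounded on $(0,t)$, so $F_1\ast F_2$ is bounded near $0$, and the earlier remark converting boundedness into a $\mathcal{W}_2$-bound supplies the desired estimate with any exponent $r''\in(-1,0)$. If $r<-1/2$, then $r'':=2r+1\in(-1,0)$ and the displayed inequality is already of the required form $A''x^{r''}$.

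The inductive step is then immediate: writing $F_1\ast\cdots\ast F_m=(F_1\ast\cdots\ast F_{m-1})\ast F_m$, the inductive hypothesis provides $\mathcal{W}(F_1\ast\cdots\ast F_{m-1})$, and a second application of the $m=2$ case finishes the argument. The only point that needs attention is that the local exponent controlling the behaviour at $0$ remain in the open interval $(-1,0)$ after each convolution; the dichotomy above ($r\geq-1/2$ versus $r<-1/2$) is exactly what makes this work, and is the main thing to verify carefully.
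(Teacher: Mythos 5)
Your tail estimate and your estimate near the origin are both correct, and in places simpler than the paper's argument: you split the tail integral at $x/2$ instead of the paper's three-piece split at $t_1$ and $x-t_2$, and near $0$ you use the dichotomy $r\ge-\tfrac12$ versus $r<-\tfrac12$ (changing the exponent), whereas the paper keeps the same exponent $r$ by writing $x^{2r+1}=x^{r+1}\cdot x^{r}<(t_1+t_2)^{r+1}x^{r}$. The induction step and the harmonization of constants are fine as well. However, as written your proof does not yet establish property $\mathcal{W}$ for $F_1\ast F_2$, because the definition requires a \emph{single} cutoff: one $t'$ such that the exponential bound holds for $x>t'$ and the power bound holds for $0<x<t'$. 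You prove the exponential bound only for $x>2t$ and the power bound only for $0<x<t$, so the band $t\le x\le 2t$ is covered by neither estimate. If you take $t'=2t$, your $\mathcal{W}_2$ argument breaks down on $[t,2t)$, since there $y$ or $x-y$ leaves $(0,t)$ and you can no longer invoke $\mathcal{W}_2(F_1)$ and $\mathcal{W}_2(F_2)$ simultaneously; if you take $t'=t$, you have no tail bound on $(t,2t]$. Unrepaired, the mismatch also propagates through the induction.

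The gap is easy to close. The paper does it by using the cutoff $t_1+t_2$ for both conditions and, on $(0,t_1+t_2)$, running a case analysis ($x>t_2$, $t_1\le x<t_2$, $0<x<t_1$) in which the integral is split so that on each piece one factor is controlled by $\mathcal{W}_2$ near its singularity while the other is handled by integrability together with boundedness away from $0$ (Remark~\ref{uw:nowauwaga}); only the piece $\int_{x-t_2}^{t_1}$ is estimated via the beta integral of Remark~\ref{lm:calki}, exactly as in your computation. Alternatively, within your own scheme it suffices to add: for $x\in[t,2t]$ split at $x/2$; then $x-y\ge x/2\ge t/2$ on the first half and $y\ge t/2$ on the second, so in each half one factor is bounded (being bounded on $[t/2,\infty)$ by Remark~\ref{uw:nowauwaga}) and the other integrable, giving a uniform bound for $F_1\ast F_2$ on $[t,2t]$, which the boundedness-to-power remark converts into a bound $Bx^{r''}$ there. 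With that addition your argument, including the exponent dichotomy and the induction, is complete.
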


\begin{proof}
Let $F_1,\dots,F_m\in L^1(0,\infty)$ satisfy the assumptions. We will show first that $\mathcal{W}(F_1\ast F_2)$ holds. Let $A>1$, $r\in(-1,0)$, $t_1,t_2>0$ be such that
\begin{equation}\label{eq:as1}
\mathcal{W}(F_1,A,r,t_1) \text{ and }\mathcal{W}(F_2,A,r,t_2)\text{ hold.}
\end{equation}
We claim that $\mathcal{W}(F_1\ast F_2,B,r,t_1+t_2)$ holds for some $B>1$. We will show first that $\mathcal{W}_1(F_1\ast F_2,B,r,t_1+t_2)$ holds for some $B>1$. Take $x> t_1+t_2$. We have

\begin{align}
|(F_1\ast& F_2)(x)|=\left|\int_{0}^{t_1}F_1(y)F_2(x-y)\ dy+\int_{x-t_2}^{x}F_1(y)F_2(x-y)\ dy \right. \nonumber\\
&\left.+\int_{t_1}^{x-t_2}F_1(y)F_2(x-y)\ dy \right| \nonumber\\
\leq&\int_0^{t_1}\left| F_1(y)F_2(x-y) \right|\ dy+ \int_{0}^{t_2}\left| F_1(x-y)F_2(y)\right|\ dy \nonumber\\
&+\int_{t_1}^{x-t_2}\left| F_1(y)F_2(x-y)\right|\ dy.\label{eq:rozb}
\end{align}

For $0<y<t_1$ we have $x-y>x-t_1> t_2$, therefore by~\eqref{eq:as1} we obtain
\begin{equation*}
|F_1(y)|<Ay^r\text{ and }|F_2(x-y)|<Ae^{-\frac{x-y}{A}},
\end{equation*}
whence
\begin{multline}\label{eq:eins}
\int_0^{t_1}|F_1(y)F_2(x-y)|\ dy\\
<A^2e^{-\frac{x}{A}}\int_{0}^{t_1}e^{\frac{y}{A}}y^r\ dy
\leq A^2e^{\frac{t_1}{A}}\frac{t_1^{r+1}}{r+1}e^{-\frac{x}{A}}=Me^{-\frac{x}{A}}
\end{multline}
for some $M>0$. The middle term in~\eqref{eq:rozb} can be bounded in a similar way by $Me^{-\frac{x}{A}}$ for some $M>0$. We will find now an upper bound for the last summand. By~\eqref{eq:as1} and since $x/2\cdot e^{-x}<e^{-x/2}$ for $x>0$, we have
\begin{multline}\label{eq:zwei}
\int_{t_1}^{x-t_2}|F_1(y)F_2(x-y)|\ dy<A^2\int_{t_1}^{x-t_2}e^{-\frac{y}{A}}e^{-\frac{x}{A}+\frac{y}{A}}\ dy\\
=A^2 e^{-\frac{x}{A}}(x-t_1-t_2)<A^2e^{-\frac{x}{A}}x=2A^3e^{-\frac{x}{A}}\frac{x}{2A}<2A^3e^{-\frac{x}{2A}}.
\end{multline}
By~\eqref{eq:eins} and~\eqref{eq:zwei} we obtain
\begin{equation*}
|(F_1\ast F_2)(x)| <Be^{-\frac{x}{B}}\text{ for }x>t_1+t_2
\end{equation*}
for some $B>1$. Therefore $\mathcal{W}_1(F_1\ast F_2,B,r,t_1+t_2)$ indeed holds.

We will show now that $\mathcal{W}_2(F_1\ast F_2,t_1+t_2)$ also holds. Let now $0<x<t_1+t_2$. 
By the symmetry of the investigated expressions, without loss of generality we may assume that $t_1\leq t_2$. If $x> t_2$ we have
\begin{align}
|(F_1\ast& F_2) (x)| \leq\int_{0}^{x-t_2}|F_1(y)F_2(x-y)|\ dy + \int_{x-t_2}^{t_1}|F_1(y)F_2(x-y)|\ dy\nonumber \\
&+ \int_{t_1}^{x}|F_1(y)F_2(x-y)|\ dy=\int_{t_2}^{x}|F_1(x-y)F_2(y)|\ dy \nonumber \\
&+ \int_{x-t_2}^{t_1}|F_1(y)F_2(x-y)|\ dy+ \int_{t_1}^{x}|F_1(y)F_2(x-y)|\ dy.\label{strr}
\end{align}
The summands $\int_{t_2}^{x}|F_1(x-y)F_2(y)|\ dy$ and $\int_{t_1}^{x}|F_1(y)F_2(x-y)|\ dy$ are of the same form and they can be bounded from above in the same way. We will give now an upper bound for the first of them. Since we have $\mathcal{W}_1(F_2,A,t_2)$, hence for $y\in (t_2,x)$ we have $|F_2(y)|<Ae^{-\frac{y}{A}}$. Since the function $y\mapsto Ae^{-\frac{y}{A}}$ is bounded on the set $(0,t_1+t_2)$, and the function $F_1$ is integrable, therefore for some $M\in\R$
\begin{equation}\label{strrr}
\int_{t_2}^{x}|F_1(x-y)F_2(y)|\ dy<M.
\end{equation}
In a similar way, for some $M\in\R$
\begin{equation}\label{strrr2}
\int_{t_1}^{x}|F_1(y)F_2(x-y)|\ dy<M.
\end{equation}
Moreover, since we have $\mathcal{W}_2(F_1,A,t_1)$ and $\mathcal{W}_2(F_2,A,t_2)$, we obtain
\begin{multline}\label{strrr3}
 \int_{x-t_2}^{t_1}|F_1(y)F_2(x-y)|\ dy\\
 \leq A^2\int_{x-t_2}^{t_1}y^r (x-y)^r \ dy  < A^2\int_{0}^{x} y^r (x-y)^r\ dy\\
 =A^2 \frac{\Gamma(r+1)\Gamma(r+1)}{\Gamma(2r+2)}x^{2r+1}<A^2 \frac{\Gamma(r+1)\Gamma(r+1)}{\Gamma(2r+2)}(t_1+t_2)^{r+1}x^r
\end{multline}
(the equality in the second line of the above calculations holds by Remark~\ref{lm:calki}).
By the estimates~\eqref{strr},~\eqref{strrr},~\eqref{strrr2} and~\eqref{strrr3} it follows that there exists $C>0$ such that 
$$
|(F_1\ast F_2)(x)|<C x^{r}\text{ for }t_2<x<t_1+t_2.
$$

If $t_1\leq x<t_2$ we have
\begin{multline*}
|F_1\ast F_2(x)|\leq \int_{0}^{x}|F_1(y)F_2(x-y)|\ dy\\
= \int_0^{t_1} |F_1(y)F_2(x-y)|\ dy + \int_{t_1}^{x}|F_1(y)F_2(x-y)|\ dy.
\end{multline*}
By treating both summands in the above inequality in a similar way as in the case $x> t_2$ one can deduce that there exists $C>0$ such that 
$$
|(F_1\ast F_2)(x)|<C x^{r}\text{ for }t_1<x<t_2.
$$
For $0<x<t_1$ the situation is even simpler and we have
\begin{equation*}
|F_1\ast F_2(x)|\leq \int_{0}^{x} |F_1(y)F_2(x-y)|\ dy.
\end{equation*}
Finding again similar bounds as in the case $x> t_2$, we obtain
\begin{equation*}
|(F_1\ast F_2)(x)|<C x^{r}\text{ for }x<t_1
\end{equation*}
for some $C>0$. This means that $\mathcal{W}_2(F_1\ast F_2,t_1+t_2)$ holds. Therefore also $\mathcal{W}(F_1\ast F_2)$ holds. By induction on $m$ we obtain that $\mathcal{W}(F_1\ast \dots \ast F_m)$ also holds.
\end{proof}

\begin{pr}\label{lm:wlasnoscPdlasplotu}
Let $d\geq 1$ and $F_1,\dots,F_d\in L^1(0,\infty)$. Let $k\geq 0$. Assume that for any $1\leq i,j \leq d$ the function $F_i\ast F_j$ is analytic at zero and we have $\mathcal{W}\left(x \mapsto x^l\cdot\frac{d^l}{dx^l}F_i(x)\right)$ for $0\leq l \leq k$ and $1\leq i\leq d$. Then we also have $\mathcal{W}\left(x \mapsto x^k\cdot \frac{d^k}{dx^k}(F_1\ast \dots \ast F_d)(x)\right)$.
\end{pr}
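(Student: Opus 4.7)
The plan is to proceed by strong induction on $d$, with the real work at $d=2$ and the key inductive trick being to peel off the pair $F_1\ast F_2$, which is analytic at zero by hypothesis. The base case $d=1$ is immediate since the conclusion coincides with the hypothesis. For $d=2$ I would first invoke Remark~\ref{uw:3.21} to pass from the assumption $\mathcal{W}(x\mapsto x^l F_i^{(l)}(x))$ to uniform continuity of each $F_i^{(l)}$ on intervals $[c,\infty)$, which legitimises applying Proposition~\ref{uw:uw2} at level $k$ to $(F_1,F_2)$; this produces a decomposition of $(F_1\ast F_2)^{(k)}(x)$ into two integrals on $(0,x/2)$ plus a boundary sum evaluated at $x/2$. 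For $\mathcal{W}_1$ at infinity, the point is that $y\in[0,x/2]$ forces $x-y\ge x/2$, so the weighted exponential bound $|x^l F_i^{(l)}(x)|<Ae^{-x/A}$ gives $|F_i^{(l)}(x-y)|\le A(x/2)^{-l}e^{-(x-y)/A}$; after multiplying by $x^k$ and integrating against $F_j\in L^1$ (respectively evaluating at $x/2$ for the boundary sum) each term is exponentially small. For $\mathcal{W}_2$ near zero I would use the hypothesis that $F_1\ast F_2$ is analytic at zero: then $(F_1\ast F_2)^{(k)}$ is analytic, in particular bounded on a right-hand neighborhood of zero, and any function bounded on $(0,t)$ satisfies $|F(x)|<Ax^r$ for every $r\in(-1,0)$ after enlarging $A$.

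For the inductive step at $d\ge 3$, I would factor $F_1\ast\dots\ast F_d=G\ast H$ with $G:=F_1\ast F_2$ and $H:=F_3\ast\dots\ast F_d$. The $d=2$ case applied to $F_1,F_2$ gives $\mathcal{W}(x\mapsto x^l G^{(l)}(x))$ for $0\le l\le k$, and since $G$ is analytic at zero by hypothesis each $G^{(l)}$ is bounded on a neighborhood of zero. The strong inductive hypothesis applied to the $d-2$ functions $F_3,\dots,F_d$ (whose pair-analyticity conditions hold by restriction) furnishes $\mathcal{W}(x\mapsto x^l H^{(l)}(x))$ for $0\le l\le k$. I then apply Proposition~\ref{uw:uw2} to the pair $(G,H)$. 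The $\mathcal{W}_1$ argument at infinity proceeds identically to the $d=2$ case. For $\mathcal{W}_2$ near zero, writing $|H^{(l)}(x)|\le Ax^{r-l}$ for some $r\in(-1,0)$ coming from the $\mathcal{W}$-hypothesis on $H$, the integral $\int_0^{x/2}G(y)H^{(k)}(x-y)\,dy$ is bounded by $M\int_0^{x/2}(x-y)^{r-k}\,dy=O(x^{r-k+1})$, the integral $\int_0^{x/2}H(y)G^{(k)}(x-y)\,dy$ is bounded by $M\int_0^{x/2}y^r\,dy=O(x^{r+1})$, and the boundary term $G^{(l)}(x/2)H^{(k-1-l)}(x/2)$ is $O(x^{r-k+l+1})$. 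Multiplying each by $x^k$ yields $O(x^{r+1})$ or smaller, which is bounded near zero and gives $\mathcal{W}_2$.

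The main obstacle is the $\mathcal{W}_2$ bound in the inductive step: the full convolution $F_1\ast\dots\ast F_d$ is typically \emph{not} analytic at zero for $d\ge 3$ even when all pair convolutions are (in the motivating case where each $F_i$ has an $x^{-1/2}$ singularity, the leading behavior of the $d$-fold convolution is $x^{d/2-1}$, a half-integer power when $d$ is odd). The observation that unlocks the argument is that it is enough to peel off a single analytic-at-zero pair $G=F_1\ast F_2$: the boundedness of $G$ and all its derivatives near zero compensates for the singular but $\mathcal{W}$-controlled behavior of $H=F_3\ast\dots\ast F_d$, so every term in the Proposition~\ref{uw:uw2} decomposition is small enough to yield $\mathcal{W}_2$ without requiring the full convolution to be analytic at zero.
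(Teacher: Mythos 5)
Your proposal is correct and essentially reproduces the paper's own argument: the paper likewise treats the pair case first (its Step~1, using Proposition~\ref{uw:uw2} together with analyticity of $F_i\ast F_j$ at zero for the $\mathcal{W}_2$ bound) and then induces by splitting the longer convolution into an analytic-at-zero pair convolved with a factor carrying the weighted $\mathcal{W}$ property, estimating the two integrals and the boundary sum exactly as you do. The only difference is cosmetic: the paper peels the pair $F_{n_0}\ast F_{n_0+1}$ off the end and steps up one function at a time, while you peel $F_1\ast F_2$ off the front and use strong induction on $d$.
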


\begin{proof}
For $k=0$ the claim follows directly by Lemma~\ref{lm:lemat5}. The proof for $k\geq 1$ will be devided into two steps. We will show first that $\mathcal{W}\left(\frac{d^l}{dx^l}F_{i_1}\ast F_{i_2}\right)$ holds for $1\leq i_1<i_2\leq d$ and $1\leq l \leq k$. The second part will consist of an inductive argument using the first part of the proof.

\vspace*{10pt}
\noindent
\emph{Step 1.} We will show first that $\mathcal{W}\left(\frac{d^l}{dx^l}(F_{i_1}\ast F_{i_2}) \right)$ holds for any $1\leq i_1<i_2\leq m$ and $1\leq l\leq k$. Without loss of generality we may take $i_1=1$ and $i_2=2$. Fix $1\leq l\leq k$. By Proposition~\ref{uw:uw2} we have
\begin{multline}\label{eq:toge}
\left|\frac{d^l}{dx^l}(F_1\ast F_2)(x)\right|\leq \int_0^{\frac{x}{2}} |F_1(y)|\left| \frac{d^l}{dx^l} F_2(x-y)\right|\ dy\\
+\int_0^{\frac{x}{2}} |F_2(y)|\left| \frac{d^l}{dx^l} F_1(x-y)\right|\ dy\\
+\sum_{j=0}^{l-1} |w_j| \left| \frac{d^j}{dx^j}F_1\left(\frac{x}{2} \right)\right| \left|\frac{d^{l-1-j}}{dx^{l-1-j}}F_2\left(\frac{x}{2} \right)\right|
\end{multline}

for some $w_j\in\R$, $0\leq j\leq l-1$.

Let $t>0$ and $A>1$ be such that for $0\leq j\leq l-1$ we have $\mathcal{W}\left(x\mapsto x^j \cdot \frac{d^j}{dx^j}F_1(x),t/2,A\right)$ and $\mathcal{W}\left(x\mapsto x^j \cdot \frac{d^j}{dx^j}F_2(x),t/2,A\right)$. 

Using similar arguments as in the proof of Lemma~\ref{lm:lemat5} one can show that $\mathcal{W}_1\left(\frac{d^l}{dx^l}(F_1\ast F_2) \right)$ holds.

Since the function $F_1\ast F_2$ is analytic at zero, the derivative $\frac{d^l}{dx^l}(F_1\ast F_2)$ is also is analytic at zero and has a finite limit $\lim_{x\to 0^+}\frac{d^l}{dx^l}(F_1\ast F_2)(x)$. Therefore, as a continuous function, it is uniformly continuous on the interval $(0,t]$. Therefore there exists a constant $A>1$ such that $\left|\frac{d^l}{dx^l}(F_1\ast F_2)\right|<Ax^{r}$. Hence  $\mathcal{W}_2\left(\frac{d^l}{dx^l}(F_1\ast F_2) \right)$ holds. Thus we have shown that $\mathcal{W}(\frac{d^l}{dx^l}\left(F_1\ast F_2)\right)$ holds for any $0\leq l\leq k$.

\vspace*{10pt}
\noindent
\emph{Step 2.}
Now we will use an inductive argument. Assume that for some $1\leq n_0< d$ for any $1\leq n\leq n_0$ and any $1\leq i_1<\dots<i_n\leq d$ and $1\leq l\leq k$ we have 
$$
\mathcal{W}\left(x\mapsto x^l \cdot\frac{d^l}{dx^l}\left(F_{i_1}\ast \dots \ast F_{i_{n}}\right)(x)\right).
$$
We will show that for any $1\leq i_1<\dots <i_{n_0+1}\leq d$ and $1\leq l\leq k$ we have
$$
\mathcal{W}\left(x\mapsto x^l \cdot\frac{d^l}{dx^l}\left(F_{i_1}\ast \dots \ast F_{i_{n_0+1}}\right)(x)\right).
$$
Without loss of generality we may assume that $i_s=s$ for $1\leq s\leq n_0+1$. We obtain
$$
F_1\ast \dots\ast F_{n_0+1}=(F_1\ast \dots \ast F_{n_0-1})\ast (F_{n_0}\ast F_{n_0+1}).
$$
To make the notation shorter, let $G_1:=F_1\ast \dots \ast F_{n_0-1}$ and $G_2:=F_{n_0}\ast F_{n_0+1}$. Then
\begin{multline}\label{eq:roww}
\left|\frac{d^l}{dx^l}(G_1\ast G_2) (x) \right|\leq \int_0^{x/2}\left| G_1(y)\frac{d^l}{dx^l}G_2(x-y)\right|\ dy\\
+ \int_{0}^{x/2} \left|G_2(y) \frac{d^l}{dx^l}G_1(x-y)\right|\ dy \\
+ \sum_{j=0}^{l-1}\overline{w}_j \left| \frac{d^j}{dx^j} G_1(x/2)\frac{d^{l-1-j}}{dx^{l-1-j}}G_2(x/2)\right|
\end{multline}
for some $\overline{w}_j$, $0\leq j\leq l-1$. Let $A>1$ and $t>0$ be such that we have
\begin{equation}\label{zalind}
\mathcal{W}\left(x\mapsto x^j\cdot\frac{d^j}{dx^j}G_1(x),A,t/2\right)
\end{equation}
and
\begin{equation}\label{1krok}
\mathcal{W}\left(\frac{d^j}{dx^j}G_2,A,t/2\right)\text{ for }1\leq j\leq k.
\end{equation}
By Lemma~\ref{lm:lemat5} it follows that without loss of generality we may assume that the above properties also hold for $j=0$.
Fix $1\leq l\leq k$.

We will show now that
$$
\mathcal{W}_1\left(x\mapsto x^l\cdot \frac{d^l}{dx^l}(G_1\ast G_2)(x)\right)
$$ 
holds. Take $x> t$. Using the inequality~\eqref{eq:roww} we will estimate now the value of $\left| x^l \cdot \frac{d^l}{dx^l}(G_1\ast G_2) (x) \right|$. For some $M>1$ we have
\begin{multline}\label{po:a}
x^l\cdot \int_0^{x/2}\left|G_1(y) \frac{d^k}{dx^k}G_2(x-y) \right|\ dy \leq Ax^l\cdot e^{-\frac{x}{A}} \int_{0}^{x/2} |G_1(y)|e^{\frac{y}{A}}\ dy\\
\leq Ax^l \cdot e^{-\frac{x}{2A}}\cdot\int_{0}^{x/2} |G_1(y)| \ dy < M e^{-\frac{x}{M}}.
\end{multline}
Moreover, since $y\in(0,x/2)$, we have $x-y>x/2$, whence for some $M>1$
\begin{multline}\label{po:b}
x^l\cdot \int_{0}^{x/2} \left |G_2(y) \frac{d^l}{dx^l}G_1(x-y)\right| \ dy\\
 < 2^l \cdot\int_{0}^{x/2} |G_2(y)| \cdot \left|(x-y)^l\cdot \frac{d^l}{dx^l}G_1(x-y) \right| \ dy\\
 <2^l\cdot A e^{-\frac{x}{2A}}\cdot \int_{0}^{x/2}|G_2(y)|\ dy < Me^{-\frac{x}{M}},
\end{multline}
where the one before the last inequality follows by the inductive assumption~\eqref{zalind}. Moreover, for $0\leq j\leq l-1$
\begin{multline}\label{po:c}
x^l\cdot \left|\frac{d^j}{dx^j}G_1(x/2) \frac{d^{l-1-j}}{dx^{l-1-j}}G_2(x/2) \right|\\
=x^{j+1}\cdot\left|\frac{d^j}{dx^j}G_1(x/2) \right|\cdot  \left| x^{l-1-j} \cdot \frac{d^{l-1-j}}{dx^{l-1-j}}G_2(x/2) \right|\\
<A^2x^{j+1}\cdot e^{-\frac{x}{A}}<Me^{-\frac{x}{M}}
\end{multline}
for some $M>1$ (this constant is independent of $x$). By~\eqref{po:a},~\eqref{po:b} and~\eqref{po:c} it follows that there exists $B>1$ such that for $x> t$ we have
\begin{equation*}
\left| x^l \cdot \frac{d^l}{dx^l}G_1\ast G_2 (x) \right|< Be^{-\frac{x}{B}}.
\end{equation*}
Therefore $\mathcal{W}_1\left(x\mapsto x^l\cdot \frac{d^l}{dx^l}(G_1\ast G_2)(x)\right)$ holds.

We will show now that
$$
\mathcal{W}_2\left(x\mapsto x^l\cdot \frac{d^l}{dx^l}(G_1\ast G_2)(x)\right)
$$
holds. Let $0<x<t$. Using again the inequality~\eqref{eq:roww}, we obtain an estimate for $\left| x^l \cdot \frac{d^l}{dx^l}(G_1\ast G_2) (x) \right|$. Since the function $G_1$ is integrable and the function $y\mapsto \frac{d^l}{dx^l}G_2(x-y)$ is bounded on the interval $(0,t/2)$, therefore the expression
\begin{equation*}
\int_0^{x/2}\left|G_1(y)\frac{d^l}{dx^l}G_2(x-y) \right|\ dy
\end{equation*}
is bounded, whence it follows that for some $M>1$ (independent of the choice of $x$)
\begin{equation}\label{eq:ttt}
x^l\cdot  \int_0^{x/2}\left|G_1(y)\frac{d^l}{dx^l}G_2(x-y) \right|\ dy < M.
\end{equation}
Moreover, using the inequality $x-y>x/2$ for $y\in (0,x/2)$, by boundedness of the function $G_2$ on the interval $(0,t/2)$ and by~\eqref{zalind} and Remark~\ref{uw:nowauwaga}, there exist constants $C,M>1$ (independent of $x$) such that the following inequalities hold:

\begin{multline}\label{eq:ttt2}
x^l\cdot \int_0^{x/2}\left|G_2(y) \frac{d^l}{dx^l}G_1(x-y) \right|\ dy \\
< 2^l\cdot \int_0^{x/2}\left|G_2(y) (x-y)^l\frac{d^l}{dx^l}G_1(x-y) \right|\ dy\\
<C \int_{0}^{x/2}\left|(x-y)^l \cdot \frac{d^l}{dx^l}G_1(x-y) \right| \ dy\\
= C\int_{x/2}^{x}\left|y^l \cdot\frac{d^l}{dx^l}G_1(y) \right|\ dy\\
<C\int_{0}^{t}\left|y^l \cdot\frac{d^l}{dx^l}G_1(y) \right|\ dy\\
=C\int_{0}^{t/2}\left|y^l \cdot\frac{d^l}{dx^l}G_1(y) \right|\ dy +C\int_{t/2}^{t}\left|y^l \cdot\frac{d^l}{dx^l}G_1(y) \right|\ dy\\
<M\int_{0}^{t/2}y^r \ dy + M,
\end{multline}
where the last expression (by the assumption we have $r>-1$) is also bounded. Moreover, for $0\leq j\leq l-1$, for some $M>1$ (independent of $x$), as in~\eqref{po:c} we obtain
\begin{equation}\label{eq:ttt3}
x^l \cdot\left|\frac{d^j}{dx^j}G_1(x/2) \frac{d^{l-1-j}}{dx^{l-1-j}}G_2(x/2) \right|<Me^{-\frac{x}{M}}.
\end{equation}
By~\eqref{eq:ttt},~\eqref{eq:ttt2} and~\eqref{eq:ttt3} it follows that there exists $B>1$ such that for $0<x<t$ we have
\begin{equation*}
\left| x^l\cdot \frac{d^l}{dx^l}G_1\ast G_2(x)\right|<Bx^r,
\end{equation*}
whence $\mathcal{W}_2\left(x\mapsto x^l\cdot \frac{d^l}{dx^l}(G_1\ast G_2)(x)\right)$ also holds.

We have shown that $\mathcal{W}\left(x\mapsto x^l\cdot \frac{d^l}{dx^l}(G_1\ast G_2)(x)\right)$ holds for $1\leq l\leq k$, which completes the proof of the inductive step and of the whole proposition.
\end{proof}

\begin{uw}\label{ponim}
Under the assumptions of Proposition~\ref{lm:wlasnoscPdlasplotu}, for $d\geq 1$ even, for any $k\geq 1$
\begin{equation*}
\mathcal{W}\left(\frac{d^k}{dx^k}(F_1\ast \dots \ast F_d) \right)
\end{equation*}
holds. The proof of this fact goes along the same lines as the proof of Proposition~\ref{lm:wlasnoscPdlasplotu}.
\end{uw}

\begin{pr}\label{lm:3.33}
Let $G_1\in L^1(0,\infty)$ and $G_2 \in L^1(-\infty,0)$ satisfy the assumptions of Proposition~\ref{stw:deriv}, i.e.
\begin{itemize}
\item
the functions $G_1$ and $G_2$ are smooth,
\item
the function $G_2$ is analytic at zero.
\end{itemize}
Moreover, assume that for any $k\in \N$ we have
\begin{itemize}
\item
$\mathcal{W}\left(x \mapsto x^k \cdot \frac{d^k}{dx^k}G_1(x)\right)$,
\item
$\mathcal{W}\left(x \mapsto  \frac{d^k}{dx^k}G_2(x)\right)$.
\end{itemize}
Then for any $k\in\N$ we have 
$$
\mathcal{W}\left(x \mapsto x^k\cdot \frac{d^k}{dx^k}(G_1\ast G_2)(x)\right).
$$
\end{pr}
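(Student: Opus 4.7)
The plan is to apply Proposition~\ref{stw:deriv}, which expresses $\frac{d^k}{dx^k}(G_1\ast G_2)(x)$ explicitly on each half-line: for $x<0$ as a single integral $\int_0^{\infty}G_1(y)G_2^{(k)}(x-y)\,dy$, and for $x>0$ as the analogous integral $\int_x^{\infty}G_1(y)G_2^{(k)}(x-y)\,dy$ plus the finite sum of boundary terms $-\sum_{l=0}^{k-1}G_1^{(k-l-1)}(x)G_2^{(l)}(0)$. Verifying property $\mathcal{W}$ for the function $x\mapsto x^k\frac{d^k}{dx^k}(G_1\ast G_2)(x)$ thus splits into checking $\mathcal{W}_1$ (exponential decay for $|x|$ large) and $\mathcal{W}_2$ (polynomial growth for $|x|$ small) on each half-line, for the integral and for each boundary term separately.

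For $\mathcal{W}_1$ on the positive side I would substitute $z=y-x$ in the integral, rewriting it as $\int_0^{\infty}G_1(x+z)G_2^{(k)}(-z)\,dz$; once $x$ exceeds the threshold from $\mathcal{W}(G_1)$ (the $k=0$ hypothesis), the bound $|G_1(x+z)|\leq Ae^{-(x+z)/A}$ pulls out a factor $e^{-x/A}$, leaving the finite integral $\int_0^{\infty}e^{-z/A}|G_2^{(k)}(-z)|\,dz$, which converges because $G_2^{(k)}$ is bounded near $0$ by analyticity and satisfies $\mathcal{W}_1$ at infinity. The extra factor $x^k$ only lowers the exponential rate slightly (e.g.\ by half). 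Each boundary term $x^k G_1^{(k-l-1)}(x)G_2^{(l)}(0)$ I would rewrite as $x^{l+1}\cdot\bigl[x^{k-l-1}G_1^{(k-l-1)}(x)\bigr]\cdot G_2^{(l)}(0)$; the hypothesis $\mathcal{W}_1\bigl(x^{k-l-1}G_1^{(k-l-1)}\bigr)$ gives exponential decay of the bracket, and the polynomial $x^{l+1}$ is absorbed as before. On the negative half-line only the integral appears; now $|x-y|=y+|x|$, so $\mathcal{W}_1(G_2^{(k)})$ supplies a factor $e^{-|x|/A}$ multiplied by a finite integral of $G_1$, and $|x|^k$ is absorbed in the same way.

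For $\mathcal{W}_2$ the key observation is that $G_2^{(k)}$, being analytic at $0$ and $\mathcal{W}_1$-decaying at infinity, is globally bounded by some constant $M$. Hence for $0<|x|<t$ each of the integrals $\int_x^{\infty}G_1(y)G_2^{(k)}(x-y)\,dy$ and $\int_0^{\infty}G_1(y)G_2^{(k)}(x-y)\,dy$ is dominated by $M\int_0^{\infty}|G_1|$, a constant, and multiplying by $|x|^k$ keeps this bounded as $|x|\to 0$. The boundary terms $x^k G_1^{(k-l-1)}(x)G_2^{(l)}(0)$ are bounded by a constant times $x^{l+1+r}$ with $l+1+r>0$, so again bounded near $0$. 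A bounded function on a one-sided neighbourhood of $0$ automatically satisfies $\mathcal{W}_2$ by the elementary remark in Section~\ref{se:5.2.4}. The main obstacle I anticipate is purely bookkeeping: juggling the polynomial factor $x^k$ against the various exponential and power-law bounds so that the combined exponents come out in the required ranges, and invoking analyticity of $G_2$ at $0$ precisely where boundedness of $G_2^{(k)}$ near $0$ is needed; no new analytic input beyond Proposition~\ref{stw:deriv} and the hypotheses should be required.
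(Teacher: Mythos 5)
Your proposal is correct and follows essentially the same route as the paper: apply Proposition~\ref{stw:deriv}, then verify $\mathcal{W}_1$ and $\mathcal{W}_2$ separately on each half-line for the integral and the boundary terms, using global boundedness of $G_2^{(k)}$ on $(-\infty,0]$ (analyticity at $0$ plus $\mathcal{W}$), integrability of $G_1$, the hypotheses $\mathcal{W}\bigl(x\mapsto x^{j}G_1^{(j)}(x)\bigr)$, and absorption of the polynomial factor $x^k$ into a slightly weaker exponential. The only differences (the substitution $z=y-x$ and using the decay of $G_2^{(k)}$ instead of splitting the negative-side integral at $t$) are cosmetic.
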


\begin{proof}
We will use the formulas for the derivatives of a convolution from Proposition~\ref{stw:deriv}. Let $A>1$ and $t>0$ be such that for $0\leq j\leq k$ we have
$$
\mathcal{W}\left(x \mapsto x^j \cdot \frac{d^j}{dx^j}G_1(x),A,t\right)\text{ and }\mathcal{W}\left(x\mapsto \frac{d^j}{dx^j}G_2(x),A,t \right).
$$
By Proposition~\ref{stw:deriv} we obtain for $x>0$
\begin{multline*}
\left|\frac{d^k}{dx^k}(G_1\ast G_2)(x) \right|\\
\leq \int_x^{\infty}|G_1(y)|\cdot\left|\frac{d^k}{dx^k}G_2(x-y) \right|\ dy
+\sum_{l=0}^{k-1}\left|\frac{d^{k-l-1}}{dx^{k-l-1}}G_1(x) \right|\cdot \left|\frac{d^l}{dx^l}G_2(0) \right|,
\end{multline*}
whereas for $x<0$ we have
\begin{equation*}
\left|\frac{d^k}{dx^k}(G_1\ast G_2)(x) \right|\leq \int_0^{\infty}\left|G_1(y)\frac{d^k}{dx^k}G_2(x-y) \right| \ dy.
\end{equation*}

We will show now that
\begin{equation}\label{eq:trt}
\mathcal{W}_1\left(x \mapsto x^k\cdot \frac{d^k}{dx^k}(G_1\ast G_2)(x)\right)
\end{equation}
holds. Let $x> t$. For $y\in (x,\infty)$ we have $|G_1(y)|\leq A e^{-\frac{y}{A}}$. Moreover, the function $y\mapsto \frac{d^k}{dx^k}G_2(x-y)$ is bounded on the interval $(t,\infty)$. Therefore
\begin{multline*}
x^k\cdot \int_x^{\infty}|G_1(y)|\cdot\left|\frac{d^k}{dx^k}G_2(x-y) \right|\ dy  \\
\leq Cx^k\cdot \int_x^{\infty}Ae^{-\frac{y}{A}}\ dy=Cx^k\cdot A^2\cdot e^{-\frac{x}{A}} < Me^{-\frac{x}{M}}
\end{multline*} 
for some $C,M>1$ (these constants are independent of $x$). Moreover, for $0\leq l\leq k-1$
\begin{equation*}
x^{k-l-1}\cdot \left| \frac{d^{k-l-1}}{dx^{k-l-1}}G_1(x)\right|<Ae^{-\frac{x}{A}}.
\end{equation*}
It follows that
\begin{equation}\label{eq:ww1}
x^k\cdot \left|\frac{d^k}{dx^k} (G_1\ast G_2)(x)\right|<Be^{-\frac{x}{B}}
\end{equation}
for some $B>0$ for $x>t$.

Let $x< -t$. We have
\begin{multline*}
\left|\frac{d^k}{dx^k}(G_1\ast G_2)(x) \right|\\
\leq \int_0^t |G_1(y)|\left|\frac{d^k}{dx^k}G_2(x-y) \right|\ dy + \int_t^{\infty}|G_1(y)| \left|\frac{d^k}{dx^k}G_2(x-y) \right|\ dy.
\end{multline*}
For $y>t$ we have $|G_1(y)|<Ae^{-\frac{y}{A}}$ and the function $y\mapsto \frac{d^k}{dx^k}G_2(x-y)$ is bounded (this bound is independent of $x$), whence
$$
\int_t^{\infty}|G_1(y)| \left|\frac{d^k}{dx^k}G_2(x-y) \right|\ dy \leq  B e^{-\frac{x}{B}}
$$
for some $B>1$. We will estimate now $\int_0^t |G_1(y)|\left|\frac{d^k}{dx^k}G_2(x-y) \right|\ dy$. The function $y\mapsto y^{-r}G_1(y)$ is bounded on $(0,t)$. Since the function $G_2$ is analytic at zero and we have $\mathcal{W}\left(\frac{d^k}{dx^k}(G_2)\right)$, therefore there exists $B>1$ such that
$$
\left|\frac{d^k}{dx^k}G_2(y) \right|<B e^{-\frac{|y|}{B}}\text{ for any }y<0.
$$
Therefore, there exist $M>1$ and $C>1$ such that
\begin{multline*}
\int_0^t |G_1(y)|\left|\frac{d^k}{dx^k}G_2(x-y) \right|\ dy \leq M\int_0^t y^r \cdot e^{-\frac{|x-y|}{M}}\ dy\\
\leq Me^{-\frac{|x|}{M}}  \cdot  \int_0^t y^r\ dy \leq C e^{-\frac{|x|}{C}}.
\end{multline*}
Hence, for some $D>1$ and $x\leq-t$
\begin{equation}\label{eq:ww}
\left|x^k\cdot \frac{d^k}{dx^k}(G_1\ast G_2)(x) \right|<D e^{-\frac{|x|}{D}}.
\end{equation}
By~\eqref{eq:ww1} and~\eqref{eq:ww} we deduce that~\eqref{eq:trt} indeed holds.

We will show now that
\begin{equation}\label{eq:trt2}
\mathcal{W}_2\left(x \mapsto x^k\cdot \frac{d^k}{dx^k}(G_1\ast G_2)(x)\right)
\end{equation}
holds. Let $x\in (0,t)$. Notice that the function $y\mapsto \frac{d^k}{dx^k}G_2(x-y)$ is bounded on $(x,\infty)$ (this bound is independent of $x$). Therefore and by the integrability of $G_1$ we obtain
$$
\int_x^{\infty}|G_1(y)| \cdot\left|\frac{d^k}{dx^k}G_2(x-y) \right| \ dy<M
$$
for some $M>1$. Moreover, for $0\leq l\leq k-1$ the expression 
$$
x^{-r}\cdot x^{k-l-1}\cdot \left| \frac{d^{k-l-1}}{dx^{k-l-1}}G_1(x)\right|
$$
is bounded (again, this bound is independent of $x$). Therefore there exists $B>1$ such that for $0<x<t$
\begin{equation}\label{eq:pp}
\left|x^k\cdot \frac{d^k}{dx^k}(G_1\ast G_2)(x) \right| <Mx^r
\end{equation}
for some $M>1$.

Let $x \in (-t,0)$. Notice that the function $y\mapsto \frac{d^k}{dx^k}G_2(x-y)$ is bounded, whence by the integrability of the function $G_1$, there exists $M>1$ such that
\begin{equation}\label{eq:pp2}
|x|^k\cdot \int_0^{\infty}|G_1(y)|\cdot \left|\frac{d^k}{dx^k}G_2(x-y) \right|\ dy <M.
\end{equation}

By~\eqref{eq:pp} and~\eqref{eq:pp2} we obtain that~\eqref{eq:trt2} indeed holds, which ends the proof.
\end{proof}

\subsubsection{Proof of the main technical result}\label{se:techni}

\begin{proof}[Proof of Proposition~\ref{tw:nowametoda}]
We will show first that without loss of generality we may assume that $a=0$. Indeed, let $d\geq 1$ and take $t_1,\dots,t_d,t_1',\dots,t_d'\in\R\setminus\{0\}$ such that  $t_1+\dots+t_d=t_1'+\dots+t_d'$. For $1\leq i\leq d$ let $x_i=1/t_i$ and $x_i'=1/t_i'$ and let $\oh(x)=h(x+a)$. Then for $t\neq 0$ we have $\oh_t(x)=h_t(x+ta)$. We claim that for $t_1,\dots,t_d\neq 0$
\begin{equation}\label{poru}
\oh_{t_1}\ast \dots \ast \oh_{t_d}(x)=h_{t_1}\ast \dots \ast h_{t_d}(x+(t_1+\dots +t_d)\cdot a).
\end{equation}
We will show that this formula holds for $d=2$ (one uses induction to obtain this result for larger $d$). We have
\begin{multline*}
\oh_{t_1}\ast \oh_{t_2}(x)=\int_{\R}\oh_{t_1}(y)\oh_{t_2}(x-y)\ dy\\
=\int_{\R}h_{t_1}(y+t_1a)h_{t_2}(x-y+t_2a)\ dy=h_{t_1}\ast h_{t_2}(x+(t_1+t_2)a).
\end{multline*}
It follows by~\eqref{poru} that
$$
h_{t_1}\ast \dots \ast h_{t_d}=h_{t_1'}\ast \dots\ast  h_{t_d'} \Leftrightarrow \oh_{t_1}\ast \dots\ast \oh_{t_d}=\oh_{t_1'}\ast \dots \ast\oh_{t_d'}
$$
and instead of the density $h$ of the measure $P$ we may consider its translation $\oh$. 

Without loss of generality we may assume that $1\leq i_0 \leq d$ is such that
\begin{equation*}
t_1\leq \dots \leq t_{i_0} <0 <t_{i_0+1} \leq \dots \leq t_d.
\end{equation*}
Since $d$ is even, the cardinalities of the sets $\{t_1,\dots,t_{i_0}\}$ and $\{t_{i_0+1},\dots, t_d\}$ are of different parity. To fix attention, assume that $i_0$ is even. Then by Proposition~\ref{lm:wlasnoscPdlasplotu} and by Remark~\ref{ponim}, for any $k\geq 1$ the following properties hold:
\begin{equation*}
\mathcal{W}\left(x\mapsto x^k\cdot \frac{d^k}{dx^k}(h_{t_{i_0+1}}\ast \dots\ast h_{t_d})(x)\right)
\end{equation*}
and
\begin{equation*}
\mathcal{W}\left(x\mapsto \frac{d^k}{dx^k}(h_{t_1}\ast \dots\ast h_{t_{i_0}})(x)\right).
\end{equation*}
Therefore, by Proposition~\ref{lm:3.33}, for any $k\geq 1$ we have
\begin{equation*}
\mathcal{W}\left(x\mapsto x^k\cdot \frac{d^k}{dx^k}(h_{t_1}\ast \dots \ast h_{t_d})(x)\right).
\end{equation*}

By Lemma~\ref{stw:jjj} we conclude further that all the derivatives of the function $h_{t_1}\ast \dots \ast h_{t_d}$ decay ``sufficiently fast''. Similarly, for any subset $\{s_1,\dots,s_k\}\subset \{t_1,\dots,t_{i_0}\}$ or $\{s_1,\dots,s_k\}\subset \{t_{i_0+1},\dots,t_d\}$ all the derivatives of the function $h_{s_1}\ast \dots \ast h_{s_k}$ decay ``sufficiently fast''. Therefore by Corollary~\ref{wn:wielomianytu} we obtain that the numbers
$$
B_n=\sum_{k_1+\dots+k_d=n} \prod_{i=1}^{d} \frac{a_{k_i}\Gamma(k_i+1/2)}{t_i^{k_i}}
$$
are uniquely determined. Hence also the set $\{t_1,\dots,t_d\}$ is uniquely determined.

Notice that by the continuity of the measure $\sigma$ it follows that 
\begin{equation*}
\sigma^{\otimes d}\left( \{(t_1,\dots,t_d)\in\R^d\colon t_i\neq 0 \text{ for }1\leq i\leq d\} \right)=1.
\end{equation*}
To end the proof it suffices to apply Proposition~\ref{pr:l+p}.
\end{proof}

\subsubsection{Symmetric polynomials}\label{se:ws}
In Proposition~\ref{tw:nowametoda} an important role is played by some symmetric polynomials which are related to the density of the measure $P$ appearing in the integral operator in the so-called weak closure of times. A crucial assumption is that the values of these polynomials determine the values of the variables (up to a permutation of their names). We will now consider the following related problem. Given $d\geq 1$ and a sequence of reals $(b_k)_{k\in\N}$ we consider a system of an infinite number of equations ($n\geq 0$):
\begin{equation}\label{eq:uklrow}
\sum_{k_i\in \N,k_1+\dots+k_d=n}\prod_{i=1}^{d}b_{k_i}x_i^{k_i}=c_n.
\end{equation}
We will provide a partial answer to the following question:

\begin{equation}\label{Q:jpt} 
\begin{array}{l} 
\mbox{When does the fact that the above system of equations has a solution}\\
\mbox{imply that the solution is unique}\\
\mbox{(up to a permutation of the names of the variables)?}
\end{array}
\end{equation}

Let us introduce first the necessary notation. For natural numbers $\alpha_1,\alpha_2,\dots , \alpha_d$ such that $\alpha_1\geq \alpha_2\geq \dots \geq \alpha_d\geq 0$ we put
$$
m_{\alpha_{1},\dots,\alpha_{d}}=\sum_{\pi\in S}x_1^{\alpha_{\pi(1)}}\cdot\ldots \cdot x_d^{\alpha_{\pi(d)}},
$$
where $S$ is the maximal subset of the set of permutations of  $\{1,\dots,d\}$ such that for $\pi_1,\pi_2\in S,\ \pi_1\neq\pi_2$ we have 
$$(\alpha_{\pi_1(1)},\dots,\alpha_{\pi_1(d)})\neq (\alpha_{\pi_2(1)},\dots,\alpha_{\pi_2(d)}).$$
If there are some zeros among the numbers $\alpha_i$ we will skip them and write e.g. $m_{2,1}$ instead of $m_{2,1,0,0}$.

\begin{uw}
The polynomials $m_1,m_{1,1},\dots,m_{\underbrace{1,\dots,1}_{d}}$ are called \emph{elementary symmetric polynomials} and (by the fundamental theorem of algebra) their values determine the values of the variables $x_1,\dots,x_d$ up to a permutation of their names.
\end{uw}

We claim that under some assumptions on the numbers $b_n$ ($n\in\N$) the system of equations~\eqref{eq:uklrow} has not more than one solution (up to a permutation of the names of the variables). To make the arguments easier to follow, we will show a few first steps, in which we will recover the information about the polynomials $m_{\underbrace{1,\dots,1}_{n}}$ for consecutive natural numbers $n$ from the system of equations. Simultaneously we will state successive conditions for the sequence $(b_n)_{n\in\N}$ sufficient for our inductive procedure so that it can be continued as long as necessary.

\noindent
\textbf{Step 1.}
\\
The equation~\eqref{eq:uklrow} for $n=1$ gives the information about the value of the polynomial $m_1$ provided that $b_0,b_1\neq 0$.
\\
\textbf{Step 2.}
\\
The equation~\eqref{eq:uklrow} for $n=2$ is of the form
\begin{equation*}
b_1^2\cdot b_0^{d-2}m_{1,1}+b_2b_0^{d-1}m_2=c_2.
\end{equation*}
Notice that the following identity holds:
\begin{equation*}
m_{1,1}+m_2=m_1^2.
\end{equation*}
We treat now the polynomials $m_{1,1}$ and $m_2$ as unknowns. Since we already know $m_1$, we have obtained this way a system of two linear equations for these unknowns. For the uniqueness of the solution it is necessary and sufficient that these equations are linearly independents. This means that it suffices that $b_2 \neq C_2(b_0,b_1)$, where $C_2$ is some function of two variables. Using the system of equation which we have just derived from the original one, one can deduce the precise formula for this function:
$$C_2(b_0,b_1)=\frac{b_1^2}{b_0}.$$
\\
\textbf{Step 3.}
\\
The equation~\eqref{eq:uklrow} for $n=3$ is of the form
\begin{equation*}
b_1^3b_0^{d-3}m_{1,1,1}+b_2b_1b_0^{d-2}m_{2,1}+b_3b_0^{d-1}m_3=c_3.
\end{equation*}
Notice that the following identities hold:
\begin{align*}
6m_{1,1,1}+3m_{2,1}+m_3=&m_1^3,\\
m_{2,1}+m_3=m&_2m_1.
\end{align*}
We treat now the polynomials $m_{1,1,1}, m_{2,1}$ and $m_3$ as unknowns.
Since we already know $m_1$ and $m_2$, we have obtained this way a system of three linear equations for these unknowns:
\begin{equation*}
\left\{
\begin{array}{rrrl}
6\cdot m_{1,1,1}+&3\cdot m_{2,1}+&m_3&=m_1^3\\
 &m_{2,1}+&m_3&=m_2m_1\\
 b_1^3b_0^{d-3}\cdot m_{1,1,1}+&b_2b_1b_0^{d-2}\cdot m_{2,1}+&b_3b_0^{d-1}\cdot m_3&=c_3.
\end{array} 
\right.
\end{equation*}
For the uniqueness of the solution it is necessary and sufficient that these equations are linearly independent. This means that it suffices that $b_3 \neq C_3(b_0,b_1,b_2)$, where $C_3$ is some function of three variables. To determine the precise formula, we check that the determinant of the matrix
\begin{equation*}
\left(
\begin{array}{ccc}
6 & 3 & 1\\
0 & 1 & 1\\
b_1^3 & b_2b_1b_0 & b_3b_0^2
\end{array}
\right)
\end{equation*}
is non-zero if and only if $b_3\neq C_3(b_0,b_1,b_2)$, where
$$C_3(b_0,b_1,b_2)=\frac{3b_0b_1b_2-b_1^3}{3b_0^2}.$$

We claim that this procedure can be continued. It is not a coincidence that each time the number of equations and the number of unknowns were the same. Both the unknowns (in Step 3. we had $m_{1,1,1},m_{2,1},m_{3}$) and the right-hand sides of the identities (in Step 3. we had  $m_1^3=m_1m_1m_1$ and $m_2m_1$) are determined by partitions of $n$\footnote{We say that a decomposition of the natural number $n$ into a sum of natural numbers $n=n_1+n_2+\dots+n_k$ is called a \emph{partition} when $n_1\geq n_2\geq \dots \geq n_k$.} (in Step 3. $n=3$). Since the trivial partition $n=n$ yields the identity $m_n=m_n$, which does not convey any information, we replace it with the equation coming from the system~\eqref{eq:uklrow}. This means that at each step we have indeed the same number of equations and unknowns. Moreover, the equation given by the derived identities are linearly independent (if we hadn't replaced the equation $m_n=m_n$ with the equation coming from the system~\eqref{eq:uklrow}, the coefficient matrix would be upper-triangular). Whether the equation coming from the system~\eqref{eq:uklrow} is linearly independent of the other equations depends on whether  $b_n\neq C_n(b_0,b_1,\dots,b_{n-1})$, where $C_n$ is some function of $n$ variables, whose form can be determined at each step from the derived identities. If for $1\leq n\leq d$ we have $b_n\neq C_n(b_0,b_1,\dots,b_{n-1})$, then after performing $d$ steps, we know the values of the polynomials $m_1,m_{1,1},\dots,m_{{\underbrace{1,\dots,1}_{d}}}$. Therefore the solution is unique, up to a permutation of the names of the variables.

Unfortunately, in a concrete situation, it may turn out that it is a very difficult task to check if at all steps of the inductive procedure we have obtained a system of linearly independent equations. If at some step this is not true, one needs other methods. The described procedure clearly doesn't exhaust all the cases when given a system of equations of the form~\eqref{eq:uklrow}, we can ``read'' the values of the elementary symmetric polynomials which yield the values of all the variables (up to a permutation of their names). With such a situation we will deal in the next section. The numbers $(b_n)_{n\in\N}$ in our example will be such that some of the equations~\eqref{eq:uklrow} will not carry any additional information, i.e. the system of equation obtained using the described procedure will be not linearly independent.

\subsection{Application}\label{se:wyniki2}
The main goal in this section is to apply the results from Section~\ref{se:wyniki1} to the class of flows described in Section~\ref{se:potoki}. 

Recall that $\cT=(T_t)_{t\in\R}$ stands for the special flow over an irrational rotation on the circle $Tx=x+\alpha\ (\!\!\!\!\mod 1)$, $\alpha\in (0,1)\cap \R\setminus \Q$ under a roof function of the form $f+f_1+c\colon [0,1)\to \R$, where
$$
f(x)=-\ln(x)-\ln(1-x)-2,
$$
$f_1\colon\T\to \R$ is an absolutely continuous function with zero average and $c\in\R$ is such that $f+f_1+c>0$. Assume additionally that
$$
\liminf_{n \to \infty} q_n^3 \||q_n \alpha\| =0,
$$
where $q_n$ are the denominators in the continued fraction expansion of $\alpha$.

Let us now state the main result of this section.

\begin{tw}\label{tw:5.19}
The maximal spectral type $\sigma$ of the flow $\cT$ 
is such that the unitary flow $V_\sigma^{\odot 3}$ has simple spectrum.
\end{tw}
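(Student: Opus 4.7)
My plan is to apply Proposition~\ref{tw:nowametoda} with $d=3$. The argument naturally splits into three tasks: (a) identify a candidate measure $P\in\cP(\R)$ together with its density $h$ and verify hypotheses (i)--(iv); (b) realise $(M_m)_{\ast}(P)$ for every $m\in\Z\setminus\{0\}$ as a weak operator limit along some sequence $t_n\to\infty$; and (c) verify, with $d=3$, the uniqueness assertion on the symmetric polynomials~\eqref{eq:ff}.

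For (a), I would first use the Denjoy--Koksma inequality applied to the absolutely continuous zero-mean perturbation $f_1$ to discard its contribution and reduce the analysis to Birkhoff sums of $f(x)=-\ln x-\ln(1-x)-2$. Along the subsequence $(q_{n_k})$ satisfying $q_{n_k}^3\|q_{n_k}\alpha\|\to 0$, the strong rigidity forces a tight cancellation between the two logarithmic singularities; an asymptotic analysis (along the lines of Fr\k{a}czek--Lema\'nczyk's treatment of symmetric-logarithmic roofs) then identifies the weak limit $P$ of $(f_0^{(q_{n_k})})_\ast\mu$ as an absolutely continuous measure whose density $h$ vanishes on one of the half-lines $(-\infty,a)$ or $(a,\infty)$, is smooth on the complementary half-line, and in the neighbourhood of $a$ has the form $h(x)=(x-a)^{-1/2}\widetilde h(x)$ with $\widetilde h$ analytic at $a$ and $\widetilde h(a)\neq 0$. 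Since the limiting density decays exponentially at infinity, property $\mathcal{W}$ for each $(x-a)^n h^{(n)}(x-a)$ follows, yielding hypothesis~(iv). The $L^2$-boundedness of $(f_0^{(q_{n_k})})$ and the linear lower bound $f^{(k)}\geq ck$ hold because $f+f_1+c>0$ is bounded away from zero, so Proposition~\ref{pr:l+p} gives $U_{T_{q_{n_k}}}\to \int U_{T_{-t}}\,dP(t)$.

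For (b), I would use the telescoping identity $f_0^{(mq_{n_k})}=\sum_{j=0}^{m-1}f_0^{(q_{n_k})}\circ T^{jq_{n_k}}$ combined with the $C^0$-convergence $T^{jq_{n_k}}\to\mathrm{Id}$ guaranteed by rigidity. The joint distribution of the summands collapses onto the diagonal, so the distribution of $f_0^{(mq_{n_k})}$ converges weakly to the pushforward of $P$ under $x\mapsto mx$, which is exactly $(M_m)_\ast(P)$. A second application of Proposition~\ref{pr:l+p} along $t_n=mq_{n_k}$ then provides the weak operator convergence required by Proposition~\ref{tw:nowametoda}; taking $m$ and $-m$ covers all signs.

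For (c), I would apply the inductive procedure described in Section~\ref{se:ws} for $d=3$. Writing $b_n:=a_n\Gamma(n+1/2)$ and $x_i:=1/t_i$, the system~\eqref{eq:uklrow} at levels $n=1,2,3$ reduces to a $3\times 3$ linear system in the elementary symmetric polynomials $m_1,m_{1,1},m_{1,1,1}$. As worked out explicitly in Section~\ref{se:ws}, its coefficient matrix is invertible provided
\[
b_0\neq 0,\qquad b_1\neq 0,\qquad b_1^2\neq 2b_0b_2,\qquad b_3\neq\frac{3b_0b_1b_2-b_1^3}{3b_0^2}.
\]
These translate into four explicit inequalities for the Taylor coefficients $a_0,\ldots,a_3$ of $\widetilde h$ at $a$, which are computable directly from the density of $P$. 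Once $m_1,m_{1,1},m_{1,1,1}$ are recovered, $\{t_1,t_2,t_3\}$ is determined up to permutation, and the conclusion of Proposition~\ref{tw:nowametoda} applies. The main obstacle is this last verification: the first three non-vanishing conditions follow cleanly from $\widetilde h(a)\neq 0$ and the leading asymptotics, but the fourth is a genuinely nontrivial algebraic inequality involving $a_3$, and the $x\leftrightarrow 1-x$ symmetry of $f$ must be examined carefully to ensure that no accidental polynomial identity among the $b_n$ is forced by this symmetry. This is the single point where direct computation with the specific symmetric-logarithmic $f$ is unavoidable.
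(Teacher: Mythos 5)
Your parts (a) and (b) follow essentially the paper's route (Denjoy--Koksma to discard $f_1$, the limit distributions of $f^{(mq_{n_k})}$ as in Lemma~\ref{lm:lemma5}, and Proposition~\ref{pr:l+p}), but part (c) contains a genuine gap: the linear-algebra scheme you invoke breaks down for this particular density. With $h$ coming from $\nu=(\ln\frac{1}{2\sin\pi\cdot})_\ast(\lambda)$ one has $\widetilde h$ built from $v(x)=\sqrt{x/(e^{2x}-1)}$, whose Taylor coefficients are $a_0=\frac{\sqrt2}{2}$, $a_1=-\frac{\sqrt2}{4}$, $a_2=\frac{\sqrt2}{48},\dots$, so that with $b_n=a_n\Gamma(n+1/2)$ one gets $b_0=\frac{\sqrt2}{2}\sqrt\pi$, $b_1=-\frac{\sqrt2}{8}\sqrt\pi$, $b_2=\frac{\sqrt2}{64}\sqrt\pi$, and hence $b_1^2=2b_0b_2$ \emph{exactly}. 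Thus your third nondegeneracy condition fails: the level $n=2$ equation of~\eqref{eq:uklrow} is just a multiple of $(m_1)^2=m_2+2m_{1,1}$ and carries no new information (the same happens at level $n=4$). Consequently the $3\times3$ system at levels $n=1,2,3$ is singular, $m_{1,1}$ (equivalently $m_2$) cannot be recovered from it, and your claim that ``the first three conditions follow cleanly from $\widetilde h(a)\neq0$'' is false; the difficulty is not an inequality involving $a_3$ but the degeneracy at level $2$. This is precisely the situation the paper warns about at the end of Section~\ref{se:ws}.

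The paper's actual argument (Lemma~\ref{lm:kod}) must therefore go up to level $5$: from $c_3$ together with $(m_1)^3$ one recovers $m_3$ and $m_{2,1}+2m_{1,1,1}$; then the combination of $c_5$ with $399(m_1)^5$ factors as $-32\,(m_{2,1}+2m_{1,1,1})(63m_2+62m_{1,1})$, and since $m_{2,1}+2m_{1,1,1}=(x_1+x_2)(x_1+x_3)(x_2+x_3)$ is nonzero off a $\sigma^{\otimes3}$-null set (continuity of $\sigma$), one can divide and obtain $63m_2+62m_{1,1}$; combined with $m_2+2m_{1,1}=(m_1)^2$ this yields $m_2$, and then $m_1,m_2,m_3$ determine $\{x_1,x_2,x_3\}$ (hence $\{t_1,t_2,t_3\}$) up to permutation. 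Without this (or some substitute exploiting the higher coefficients and the genericity of $(t_1,t_2,t_3)$), your proof of the uniqueness hypothesis of Proposition~\ref{tw:nowametoda} for $d=3$ does not go through. A minor additional point: the $L^2$-boundedness of $\left(f_0^{(q_n)}\right)$ is not a consequence of the roof being bounded away from zero; it comes from a Koksma-type inequality for functions with Fourier coefficients of order $\mathrm{O}(1/n)$, as cited in the paper.
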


Recall one more result on the simplicity of spectra of unitary representations.\footnote{It was stated in~\cite{1112.5545} for automorphisms, i.e. $\mathbb{Z}$-actions but it is valid also for $\R$-representations.} 
\begin{lm}\cite{1112.5545}
If for some $k\geq 1$ the unitary operator $U^{\odot k}$ has simple spectrum then also the operators $U^{\odot j}$ for $1\leq j\leq k-1$ have simple spectra.
\end{lm}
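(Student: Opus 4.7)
The plan is to reduce everything to the single implication "simple spectrum of $U^{\odot k}$ $\Rightarrow$ simple spectrum of $U^{\odot(k-1)}$" and then iterate. First I would observe that simple spectrum of $U^{\odot k}$ forces $U$ itself to have simple spectrum: if $U$ had spectral multiplicity at least two, there would be two orthogonal $U$-invariant subspaces $H_1, H_2 \subset H$ with overlapping spectral types, and then $H_1^{\odot k}$ and $H_1^{\odot(k-1)} \odot H_2$ would give orthogonal cyclic-type subspaces of $H^{\odot k}$ whose spectral types overlap, contradicting simplicity of $U^{\odot k}$. Thus we may assume $U \cong V_\sigma$ with $\sigma$ the maximal spectral type of $U$, and invoke the disintegration criterion recalled at \eqref{eq:warunek1}: $V_\sigma^{\odot n}$ has simple spectrum if and only if, for $\sigma^{\ast n}$-a.e.\ $c$, the conditional measure $\nu_c^{(n)}$ in the disintegration of $\sigma^{\otimes n}$ over $C_n$ is purely atomic with exactly $n!$ atoms (equivalently, supported on a single free $S_n$-orbit on $\{x_1 + \dots + x_n = c\}$).

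The core step is the slicing $k \rightsquigarrow k-1$. Using the tower of $\sigma$-algebras $\sigma(C_k) \subset \sigma(C_k, x_k) = \sigma(C_{k-1}, x_k)$ together with the product structure $\sigma^{\otimes k} = \sigma^{\otimes(k-1)} \otimes \sigma$, the tower property of conditional expectations produces, for a set of pairs $(c, y)$ of full mass under the joint law of $(C_k(x), x_k)$, the identity
\begin{equation*}
(\text{normalised slice of } \nu_c^{(k)} \text{ at } x_k = y) \;=\; \nu_{c-y}^{(k-1)} \otimes \delta_y.
\end{equation*}
Where $\nu_c^{(k)}$ is $k!$-atomic, its support is $\{\tau\cdot(a_1, \dots, a_k) : \tau \in S_k\}$ for some distinct tuple $(a_1, \dots, a_k)$ with $a_1 + \dots + a_k = c$; the slice at $x_k = a_i$ then consists of the $(k-1)!$ permutations of $(a_1, \dots, \hat a_i, \dots, a_k)$, which by the above identity forces $\nu_{c - a_i}^{(k-1)}$ to be $(k-1)!$-atomic.

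Let $A$ be the set of $c$ with $\nu_c^{(k)}$ being $k!$-atomic, so $\sigma^{\ast k}(A) = 1$. Writing $\sigma^{\ast k} = \sigma \ast \sigma^{\ast(k-1)}$ and applying Fubini, one obtains $\sigma(\{y : c' + y \in A\}) = 1$ for $\sigma^{\ast(k-1)}$-a.e.\ $c'$. For any such $c'$ and $\sigma$-a.e.\ admissible $y$, the consistency of the joint law of $(C_k, x_k)$ with the single-orbit structure of $\nu_{c'+y}^{(k)}$ forces $y$ to be one of the values $a_i(c'+y)$, and the slicing identity at $c = c'+y$ yields that $\nu_{c'}^{(k-1)}$ is $(k-1)!$-atomic. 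Thus $U^{\odot(k-1)} \cong V_\sigma^{\odot(k-1)}$ has simple spectrum, and iterating the step downward produces the conclusion for all $1 \le j \le k-1$.

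The main subtlety is ensuring that the slicing identity holds on a set of $(c, y)$ large enough for the Fubini step to capture $\sigma^{\ast(k-1)}$-almost every $c'$. This in turn reduces to the observation that, under the $k!$-atomic hypothesis, the conditional distribution of $x_k$ given $C_k = c$ is automatically the uniform measure on $\{a_1(c), \dots, a_k(c)\}$, a fact forced by the symmetry of $\sigma^{\otimes k}$ in the variables together with the definition of conditional expectation; the remainder is careful bookkeeping of the null sets that arise in the two successive disintegrations.
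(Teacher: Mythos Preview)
The paper does not supply its own proof of this lemma; it is quoted from~\cite{1112.5545} and used only to pass from simple spectrum of $V_\sigma^{\odot 3}$ to that of $V_\sigma^{\odot 2}$. So there is no in-paper argument to compare against.

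Your proposal is correct. The reduction to $U$ having simple spectrum is sound: with $Z(x)\perp Z(y)$ and $\sigma_x\sim\sigma_y$, the elements $x^{\odot k}$ and the symmetrisation of $x^{\otimes(k-1)}\otimes y$ generate orthogonal cyclic subspaces of $H^{\odot k}$ with the common spectral type $\sigma_x^{\ast k}$, contradicting simplicity. The disintegration criterion you invoke is indeed an \emph{equivalence}: $V_\sigma^{\odot n}$ acts on $L^2_{\mathrm{sym}}(\R^n,\sigma^{\otimes n})$ as multiplication by $e^{2\pi i t C_n}$, and the multiplicity at $c$ equals $\dim L^2\bigl(C_n^{-1}(c)/S_n,\bar\nu_c^{(n)}\bigr)$, which is $1$ exactly when $\nu_c^{(n)}$ sits on a single $S_n$-orbit. (The ``$n!$ atoms'' phrasing of~\eqref{eq:warunek1} is the continuous-$\sigma$ version of this; your parenthetical ``single free $S_n$-orbit'' is the right general formulation.)

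The slicing step is the heart of the matter and you have it right. Two disintegrations of $\sigma^{\otimes k}$ over the common $\sigma$-algebra generated by $(C_k,x_k)=(C_{k-1}+x_k,x_k)$ give, for joint-law-a.e.\ $(c,y)$, that the fibre of $\nu_c^{(k)}$ over $x_k=y$ equals $\nu_{c-y}^{(k-1)}\otimes\delta_y$. Because the joint law of $(C_k,x_k)$ is precisely the image of $\sigma^{\ast(k-1)}\otimes\sigma$ under $(c',y)\mapsto(c'+y,y)$, the phrase ``joint-law-a.e.\ $(c,y)$'' translates directly into ``for $\sigma^{\ast(k-1)}$-a.e.\ $c'$ and $\sigma$-a.e.\ $y$'', and the Fubini step goes through cleanly. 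For $c\in A$ the $x_k$-marginal of $\nu_c^{(k)}$ is atomic (uniform on the orbit projections by $S_k$-invariance), so the exceptional set in the second disintegration misses all the relevant $(c,a_i(c))$ for $\sigma^{\ast k}$-a.e.\ $c$, exactly as you indicate.

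This disintegration-and-slice argument is the standard route and almost certainly the one taken in~\cite{1112.5545}; your write-up would benefit only from making the single-orbit (rather than $n!$-atom) formulation primary, so that no continuity hypothesis on $\sigma$ is implicitly used.
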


By the above lemma we obtain the following.
\begin{wn}
The maximal spectral type $\sigma$ of the flow $\cT$ is such that the flow $V_\sigma^{\odot 2}$ has simple spectrum.
\end{wn}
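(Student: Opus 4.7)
The statement to prove is an immediate corollary of the two results that directly precede it in the excerpt, so my plan is essentially to invoke them in sequence rather than to redo any analysis of densities, convolutions or symmetric polynomials.

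First I would recall the setup: $\cT$ is the special flow over the irrational rotation by $\alpha$ (satisfying $\liminf_n q_n^3\|q_n\alpha\|=0$) under the symmetric logarithm roof $f+f_1+c$, and $\sigma$ denotes its maximal spectral type. Theorem~\ref{tw:5.19} asserts that the unitary $\R$-representation $V_\sigma^{\odot 3}$ on the symmetric tensor cube has simple spectrum. I would take this as given, since it is proved via Proposition~\ref{tw:nowametoda} applied to the rescaled densities $P_m$ associated with $\cT$, together with the algebraic analysis in Section~\ref{se:ws}.

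Next I would invoke the lemma quoted immediately before the corollary (from~\cite{1112.5545}), which states that simplicity of spectrum of the $k$-th symmetric power $U^{\odot k}$ of a unitary operator propagates downwards to all lower symmetric powers $U^{\odot j}$ with $1\leq j\leq k-1$. Applying this lemma with $U=V_\sigma$ and $k=3$, the simplicity of spectrum of $V_\sigma^{\odot 3}$ furnished by Theorem~\ref{tw:5.19} immediately implies simplicity of spectrum of $V_\sigma^{\odot 2}$, which is exactly the assertion of the corollary.

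There is no real obstacle here: the only non-trivial input is Theorem~\ref{tw:5.19}, whose proof is the content of the bulk of this paper, and the descent lemma does all of the remaining work. Thus the proof is a one-line deduction, and the corollary should be presented simply as an application of Theorem~\ref{tw:5.19} combined with the preceding lemma.
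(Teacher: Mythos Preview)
Your proposal is correct and matches the paper's own argument exactly: the paper simply writes ``By the above lemma we obtain the following'' and states the corollary, i.e.\ it too combines Theorem~\ref{tw:5.19} (simplicity of $V_\sigma^{\odot 3}$) with the descent lemma from~\cite{1112.5545} applied with $k=3$.
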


By Theorem~\ref{tw:5.19} and by the remarks included in Section~\ref{se:potoki} we obtain immediately the following corollary.
\begin{wn}
On any closed orientable surface of genus at least $2$ there exists a smooth flow whose maximal spectral type $\sigma$ is such that the unitary flow $V_{\sigma}^{\odot 3}$ has simple spectrum.
\end{wn}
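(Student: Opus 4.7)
The plan is to apply Proposition~\ref{tw:nowametoda} with $d=3$, which is odd so the proof assembled in Section~\ref{se:techni} covers this case directly. What must be supplied is a measure $P\in\cP(\R)$ whose integer rescalings $(M_m)_\ast P$ all arise as weak limits of $U_{T_{t_n}}$, whose density $h$ satisfies conditions (i)--(iv) of Proposition~\ref{tw:nowametoda}, and for which the uniqueness statement on the symmetric expression~\eqref{eq:ff} holds.

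The first step is to produce $P$ via Proposition~\ref{pr:l+p} along the rigidity subsequence $(q_{n_k})$ satisfying $q_{n_k}^3\|q_{n_k}\alpha\|\to 0$. The idea is standard for symmetric logarithmic cocycles: using the Denjoy--Koksma inequality, the Birkhoff sum $f_0^{(q_{n_k})}(x)$ of $f(x)=-\ln x -\ln(1-x)-2$ is uniformly controlled on the $q_{n_k}$-periodic orbit of $T$ away from the singular set $\{0,1\}$, while the remaining closest-passage term produces a nontrivial distributional limit. The hypothesis $q_{n_k}^3\|q_{n_k}\alpha\|\to 0$ is precisely the rate needed to neutralise the logarithmic divergences and give $L^2$-boundedness of $(f_0^{(q_{n_k})})$; the absolutely continuous perturbation $f_1$ contributes a term that converges to $0$ in $L^2$ by rigidity of $T$. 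For every $m\in\Z\setminus\{0\}$ the sequence $(mq_{n_k})$ is again a rigidity sequence satisfying $(mq_{n_k})^3\|mq_{n_k}\alpha\|\leq |m|^{4}\,q_{n_k}^3\|q_{n_k}\alpha\|\to 0$, and the same analysis now with the roof multiplied by $m$ yields the rescaled weak limit $(M_m)_\ast P$, exactly as required by the hypothesis of Proposition~\ref{tw:nowametoda}.

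The second step is to check conditions (i)--(iv) on the density $h$ of $P$. The explicit form of the distributional limit for a symmetric logarithmic cocycle shows that $h$ is supported on a half-line $[a,\infty)$ for some $a\in\R$, is smooth on its interior, and has boundary behaviour $h(x)=(x-a)^{-1/2}\widetilde{h}(x)$ with $\widetilde{h}$ analytic at $a$ and $\widetilde{h}(a)\neq 0$. The decay condition $\mathcal{W}$ on $x\mapsto (x-a)^{n}\frac{d^{n}}{dx^{n}}h(x-a)$ reduces to a near-exponential tail bound on $h$ and its derivatives, which follows from the analytic inversion of the logarithmic singularity of $f$ (inverting $-\ln x$ produces an exponential tail). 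Weak mixing, needed at the outset, comes from the standard criterion for Kochergin--von Neumann flows with asymmetric logarithmic roof (and is preserved under passing to the special flow under the perturbed symmetric roof via the usual essential value / Gottschalk--Hedlund type arguments).

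The final and most delicate step is to verify, for $d=3$ and $\sigma^{\otimes 3}$-a.e. $(t_1,t_2,t_3)$, that the values of~\eqref{eq:ff} determine $\{t_1,t_2,t_3\}$ up to permutation when $t_1+t_2+t_3$ is fixed. Setting $b_k:=a_k\,\Gamma(k+\tfrac12)$, this is precisely the polynomial system~\eqref{eq:uklrow} analysed in Section~\ref{se:ws}. For $d=3$ only the three elementary symmetric polynomials $m_1$, $m_{1,1}$, $m_{1,1,1}$ are required, so the inductive procedure of Section~\ref{se:ws} applies provided $b_1\neq 0$, $b_2\neq b_1^{2}/b_0$, and $b_3\neq (3b_0b_1b_2-b_1^{3})/(3b_0^{2})$. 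Since $b_0,b_1,b_2,b_3$ are explicit functions of the first four Taylor coefficients of $\widetilde{h}$ at $a$, the verification reduces to a finite computation with the known density. I expect this to be the main obstacle: if an accidental coincidence made one of these non-degeneracy conditions fail, one would have to fall back on the ad hoc symmetric-function analysis hinted at the end of Section~\ref{se:ws}. Once the non-degeneracy is confirmed and uniqueness on the generic line $t_1+t_2+t_3=c$ is established, Proposition~\ref{tw:nowametoda} yields simple spectrum for $V_\sigma^{\odot 3}$.
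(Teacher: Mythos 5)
The decisive step of your plan---verifying that for $\sigma^{\otimes 3}$-a.e.\ $(t_1,t_2,t_3)$ with fixed sum the values of~\eqref{eq:ff} determine $\{t_1,t_2,t_3\}$---is left to the generic inductive procedure of Section~\ref{se:ws} with the non-degeneracy checks at $n=1,2,3$, and this is exactly where the argument breaks down. For the density actually arising here, $h(x)=\frac{2}{\pi}(4e^{2x}-1)^{-1/2}$ for $x>-\ln 2$, one has $h(x)=(x+\ln 2)^{-1/2}\widetilde{h}(x)$ with Taylor coefficients $a_0=\frac{\sqrt{2}}{2}$, $a_1=-\frac{\sqrt{2}}{4}$, $a_2=\frac{\sqrt{2}}{48}$, $a_3=\frac{\sqrt{2}}{96}$ coming from $v(x)=\sqrt{x/(e^{2x}-1)}$; setting $b_k=a_k\Gamma(k+\frac12)$ one checks $b_1^2=2b_0b_2$, so the $n=2$ equation $c_2=b_1^2b_0\,m_{1,1}+b_2b_0^2\,m_2$ is proportional to $(m_1)^2=m_2+2m_{1,1}$ and carries no information beyond $c_1$ (and $c_4$ is likewise redundant). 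In other words, the ``accidental coincidence'' you hedge against does occur, and the three conditions you list (which, incidentally, are computed from the identity $m_{1,1}+m_2=m_1^2$ missing the factor $2$; the correct $n=2$ condition is $b_1^2\neq 2b_0b_2$) cannot deliver $m_{1,1}$ or $m_{1,1,1}$. The fallback you wave at is precisely the content of Lemma~\ref{lm:kod}, which is the core of Theorem~\ref{tw:5.19}: one must use $c_1,c_3,c_5$ together with the expansions of $(m_1)^3$ and $(m_1)^5$, observe the factorization $-\frac{32768}{\sqrt{2}\pi^{3/2}}c_5-399(m_1)^5=-32(x_1+x_2)(x_1+x_3)(x_2+x_3)(63m_2+62m_{1,1})$ in the variables $x_i=1/t_i$, invoke the continuity of $\sigma$ to ensure $(x_1+x_2)(x_1+x_3)(x_2+x_3)\neq 0$ almost everywhere, and thence recover $63m_2+62m_{1,1}$, then $m_2$, and finally the set $\{x_1,x_2,x_3\}$ from $m_1,m_2,m_3$. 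Without carrying out this (or an equivalent) computation the proof is incomplete.

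The remaining steps of your sketch do follow the paper's route: $P=\nu=\bigl(\ln\frac{1}{2\sin\pi\,\cdot}\bigr)_{\ast}(\lambda)$ and its integer rescalings are obtained from Lemma~\ref{lm:lemma5} and Proposition~\ref{pr:l+p}, and conditions (i)--(iv) follow from the explicit density and Lemmas~\ref{lm:funk}--\ref{lm:wlasnoscw}. Three smaller inaccuracies: the $L^2$-boundedness of $\bigl(f_0^{(q_n)}\bigr)$ comes from a Koksma-type inequality (Theorem 2.7 of the cited work), not from the condition $q_{n_k}^3\|q_{n_k}\alpha\|\to 0$, which is used only to identify the limit distributions of $f^{(mq_{n_k})}$; weak mixing is a cited result for the \emph{symmetric} logarithmic roof (your appeal to the asymmetric Kochergin--von Neumann criterion does not apply here); and since the statement to be proved concerns surfaces, you still need the reduction of Section~\ref{se:potoki} (Blokhin's construction) realizing these special flows as smooth flows on every closed orientable surface of genus at least $2$, which your proposal never invokes.
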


\begin{uw}
The question whether the maximal spectral type of the considered flows has the SCS property remains open. We think that the answer is positive. We believe that using computer analysis one could show that $V_\sigma^{\odot n}$ has simple spectrum also for some larger $n> 3$.
\end{uw}

We will state now some auxiliary results needed for the proof of Theorem~\ref{tw:5.19}, which we believe that can be also of an independent interest.

\subsubsection{Limit distributions}
We will concentrate now on finding distributions satisfying condition~\eqref{eq:zbieznosc} (see page~\pageref{eq:zbieznosc}) for $\cT$. Let $(q_n)_{n\in\N}$ be the sequence of the denominators and $(p_n)_{n\in\N}$ the sequence of the numerators of $\alpha$ in the continued fraction expansion (see Section~\ref{se:cf}).

By the classical Denjoy-Koksma inequality (see e.g.~\cite{MR832433}) $f_1^{(q_n)} \to 0$ uniformly. Similarly, also $f_1^{(mq_n)}\to 0$ uniformly for any $m\in\Z$. Therefore, while calculating the limit distribution of the sequence of functions $(f+f_1)^{(q_n)}$, we may skip the function $f_1$ and calculate instead the limit distribution of the sequence of functions $f^{(q_n)}$.

For $q\in\N$ by $f_q \colon (0,1)\to\R$ let us denote the following function:
\begin{equation}\label{eq:f_q}
f_q(x)=\sum_{k=0}^{q-1}f\left(x+\frac{k}{q}\right).
\end{equation}
It is a periodic function with period $\frac{1}{q}$, which has the same distribution as the function $\tilde{f}_q \colon (0,1) \to \R$ given by the formula
\begin{equation}\label{eq:falka}
\tilde{f}_q(x)=f_q\left(\frac{x}{q}\right).
\end{equation}
\begin{lm}\label{lm:zbieznosc}
The sequence of functions $\left(\widetilde{f}_q\right)_{q\in\N}$ converges uniformly to $\ln\frac{1}{2\sin(\pi \cdot)}$ on the interval $(0,1)$ as $q$ tends to $\infty$.
\end{lm}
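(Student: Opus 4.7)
The plan is to compute $\widetilde{f}_q$ in closed form via the Gamma function and then apply Stirling's expansion. Unwinding the definitions with $f(u)=-\ln u - \ln(1-u)-2$ gives
$$\widetilde{f}_q(x) = \sum_{k=0}^{q-1}\left[-\ln\frac{x+k}{q}-\ln\frac{q-x-k}{q}-2\right].$$
In the second sum I reindex $k \mapsto q-1-k$ so that $q-x-k$ becomes $(1-x)+k$. Then the identity $\prod_{k=0}^{q-1}(y+k)=\Gamma(y+q)/\Gamma(y)$, applied with $y=x$ and $y=1-x$, collapses the whole expression to
$$\widetilde{f}_q(x) = \ln\Gamma(x)+\ln\Gamma(1-x)+2q\ln q-2q-\ln\Gamma(x+q)-\ln\Gamma(1-x+q).$$

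Next I apply Stirling's expansion $\ln\Gamma(z)=(z-\tfrac12)\ln z - z+\tfrac12\ln(2\pi)+O(1/z)$ to $z=x+q$ and $z=1-x+q$. Expanding $\ln(x+q)=\ln q + x/q + O(1/q^2)$ and collecting terms should give
$$\ln\Gamma(x+q)=(x+q-\tfrac12)\ln q - q + \tfrac12\ln(2\pi)+O(1/q),$$
and analogously for $1-x+q$; adding these, the $x$ and $1-x$ parts of the leading coefficients combine as $(x+q-\tfrac12)+(1-x+q-\tfrac12)=2q$, producing
$$\ln\Gamma(x+q)+\ln\Gamma(1-x+q)=2q\ln q-2q+\ln(2\pi)+O(1/q).$$
The growing pieces $2q\ln q-2q$ (which come from Stirling) now cancel the $2q\ln q-2q$ arising from the product expansion — here the role of the constant $-2$ in the definition of $f$ becomes transparent, it is chosen precisely so that this cancellation occurs. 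What remains is
$$\widetilde{f}_q(x) = \ln\Gamma(x)+\ln\Gamma(1-x)-\ln(2\pi)+O(1/q),$$
and Euler's reflection formula $\Gamma(x)\Gamma(1-x)=\pi/\sin(\pi x)$ converts this into
$$\widetilde{f}_q(x) = \ln\frac{1}{2\sin(\pi x)}+O(1/q).$$

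The only point that requires care — and which I consider the main (though mild) obstacle — is verifying that the $O(1/q)$ error is genuinely uniform in $x\in(0,1)$. Fortunately, for $x\in(0,1)$ the arguments $x+q$ and $1-x+q$ satisfy $z\geq q$, so Stirling's remainder $O(1/z)$ is uniformly $O(1/q)$; similarly the higher-order terms in $\ln(x+q)=\ln q+x/q+O(1/q^2)$ multiplied by $x+q-\tfrac12$ produce contributions of order $x(x-\tfrac12)/q + O(1/q)$ which are bounded by a constant times $1/q$ independently of $x\in(0,1)$. Hence the convergence $\widetilde f_q\to\ln\frac{1}{2\sin(\pi\,\cdot\,)}$ is uniform on $(0,1)$, as claimed. (An alternative route, worth mentioning but not essential, uses the product identity $\prod_{k=0}^{q-1}2\sin(\pi(x+k)/q)=2\sin(\pi x)$ combined with the Riemann-sum limit of $\sum_k\ln\!\bigl(\sin(\pi u_k)/(\pi u_k(1-u_k))\bigr)$; the classical values $\int_0^1\ln\sin\pi u\,du=-\ln 2$ and $\int_0^1\ln(u(1-u))\,du=-2$ then combine to produce exactly the same limit.)
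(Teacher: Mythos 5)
Your proof is correct, and its first half coincides with the paper's: both reduce $\widetilde f_q$ to the closed form $-2q+2q\ln q-\ln\bigl(\Gamma(x+q)\Gamma(q+1-x)\bigr)+\ln\Gamma(x)+\ln\Gamma(1-x)$ and then invoke the reflection formula $\Gamma(x)\Gamma(1-x)=\pi/\sin(\pi x)$, so that everything hinges on showing $r_q(x)=2q\ln q-2q-\ln\bigl(\Gamma(x+q)\Gamma(q+1-x)\bigr)+\ln 2\pi\to 0$ uniformly. Where you diverge is in how that uniformity is obtained. You apply a quantitative Stirling expansion $\ln\Gamma(z)=(z-\tfrac12)\ln z-z+\tfrac12\ln(2\pi)+O(1/z)$ (uniform for $z\geq q$, e.g. via Binet's bound) directly at $z=x+q$ and $z=1-x+q$, together with the uniform expansion $\ln(x+q)=\ln q+x/q+O(1/q^{2})$; this is clean and your bookkeeping of the $x$-dependent terms is right. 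The paper instead avoids any uniform-in-$x$ remainder estimate: it shows that $y\mapsto\Gamma(q+\tfrac12+y)\Gamma(q+\tfrac12-y)$ is increasing (digamma is increasing), hence $\Gamma(q+\tfrac12)^{2}\leq\Gamma(x+q)\Gamma(q+1-x)\leq\Gamma(q)\Gamma(q+1)$ for $x\in(0,1)$, and then applies only the classical limit form of Stirling (plus the duplication formula for $\Gamma(q+\tfrac12)$) at these two $x$-independent endpoints. Your route buys brevity at the price of quoting Stirling with an explicit uniform error term; the paper's sandwich argument is slightly longer but needs nothing beyond the crude asymptotic $n!\sim\sqrt{2\pi n}\,(n/e)^{n}$. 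One caveat on your parenthetical alternative: since $\sum_{k}g\bigl((x+k)/q\bigr)$ is not normalized by $1/q$, the bare Riemann-sum heuristic with $\int_0^1\ln\sin\pi u\,du=-\ln 2$ and $\int_0^1\ln(u(1-u))\,du=-2$ does not suffice by itself; one needs an Euler--Maclaurin-type estimate (using $g(0)=g(1)$) to see that the unnormalized sum tends to $0$, so that sketch should not be presented as a complete second proof.
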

\begin{proof}
We have
\begin{align*}
\tilde{f}_q(x)&=\sum_{k=0}^{q-1}f\left(\frac{x}{q}+\frac{k}{q} \right)=-2q-\sum_{k=0}^{q-1} \ln\left(\frac{x+k}{q} \right)-\sum_{k=0}^{q-1}\ln \left(\frac{q-k-x}{q} \right)=\\
&=-2q+2q\ln(q)-\ln\left( \prod_{k=0}^{q-1}(x+k) \prod_{k=1}^{q}(-x+k) \right)=\\
&=-2q+2q\ln(q)-\ln\left( \frac{\Gamma(x+q)}{\Gamma(x)}\frac{\Gamma(-x+q+1)}{\Gamma(1-x)} \right).
\end{align*}
Since $\Gamma(x)\Gamma(1-x)=\frac{\pi}{\sin(\pi x)}$ (see e.g.~\cite{abramowitz+stegun}),
we obtain
\begin{equation*}
\tilde{f}_q(x)=-2q+2q\ln(q)-\ln\left(\Gamma(q+x)\Gamma(q+1-x)\right)+\ln\left(\frac{\pi}{\sin(\pi x)}\right).
\end{equation*}
We claim that
\begin{equation}\label{eq:uniform}
\lim_{q\to\infty}r_q(x)=0,
\end{equation}
where
\begin{equation*}
r_q(x)=-2q+2q\ln(q)-\ln(\Gamma(q+x)\Gamma(q+1-x))+\ln2\pi
\end{equation*}
and the convergence is uniform on the interval $(0,1)$. To this end we will show that
\begin{equation}\label{eq:pomocnicza}
\Gamma\left(q+\frac{1}{2}\right)\Gamma\left(q+\frac{1}{2}\right)\leq \Gamma(x+q)\Gamma(q+1-x)\leq \Gamma(q)\Gamma(q+1).
\end{equation}
For $0\leq y \leq \frac{1}{2}$ let
\begin{equation*}
G(y)=\Gamma(q+\frac{1}{2}+y)\Gamma(q+\frac{1}{2}-y).
\end{equation*}
We have
\begin{multline*}
G'(y)=\Gamma(q+\frac{1}{2}+y)\Gamma'(q+\frac{1}{2}-y)+\Gamma'(q+\frac{1}{2}+y)\Gamma(q+\frac{1}{2}-y)\\
=\Gamma(q+\frac{1}{2}+y)\Gamma(q+\frac{1}{2}-y)\left(\Psi(q+\frac{1}{2}+y)-\Psi(q+\frac{1}{2}-y) \right),
\end{multline*}
where $\Psi(x)=\frac{\Gamma'(x)}{\Gamma(x)}$ is the so-called function digamma. Since (see~\cite{Alzer:1997:SIG}, H.~Alzer)
\begin{equation*}
\Psi'(x)=\sum_{k=0}^{\infty}\frac{1}{(x+k)^2},
\end{equation*}
the function $\Psi$ is increasing, $G'(y)>0$ for $y>0$ and the function $G$ is also increasing. Therefore
\begin{equation*}
\Gamma \left(q+\frac{1}{2}\right) \Gamma\left(q+\frac{1}{2}\right)=G(0)\leq G(y)\leq G\left(\frac{1}{2} \right)=\Gamma(q)\Gamma(q+1),
\end{equation*}
whence~\eqref{eq:pomocnicza} holds. We will find now an upper and lower bound for $r_q(x)$ which are independent of $x$ and converge to $0$ when $q$ tends do $\infty$. We will first find an upper bound. By the left inequality in~\eqref{eq:pomocnicza}, by the equality
\begin{equation*}
\Gamma\left(z+\frac{1}{2}\right)=\frac{2^{1-2z}\sqrt{\pi}\cdot\Gamma(2z)}{\Gamma(z)}
\end{equation*}
(see e.g.~\cite{abramowitz+stegun}) and by the Stirling's formula\footnote{Recall the Stirling's formula: $\lim_{n\to \infty}\frac{n!}{\sqrt{2\pi n}\left(n/e\right)^n}$=1.} we obtain
\begin{align*}
r_q(x) &\leq \ln \left(e^{-2q}q^{2q}\frac{1}{\Gamma(q+\frac{1}{2})\Gamma(q+\frac{1}{2})}\cdot2\pi \right)\\
&=\ln \left(2\pi \left(e^{-q}q^q \frac{\Gamma(q)}{2^{1-2q}\sqrt{\pi}\cdot\Gamma(2q)}  \right)^2 \right)\\
&=\ln\left(2\pi \left( e^{-q}\cdot q^q \cdot \frac{1}{2^{1-2q}}\cdot\frac{1}{\sqrt{\pi}}\cdot\frac{2q}{q}\cdot\frac{q!}{(2q)!}\right)^2 \right)\\
&=\ln\left(2\pi \left( e^{-q}\cdot q^q \cdot \frac{1}{2^{1-2q}}\cdot\frac{1}{\sqrt{\pi}}\cdot\frac{2q}{q}\cdot\frac{q!}{\sqrt{2\pi q}\cdot\frac{q^q}{e^q}}\cdot\sqrt{2\pi q}\cdot\frac{q^q}{e^q}  \right.\right.\\
&\left.\left. \cdot\frac{\sqrt{4\pi q}\cdot\frac{(2q)^{2q}}{e^{2q}}}{(2q)!}\cdot \frac{1}{\sqrt{4\pi q}\cdot\frac{(2q)^{2q}}{e^{2q}}}   \right)^2 \right)\\
&=\ln\left(\left(\frac{q!}{\sqrt{2\pi q}\cdot\frac{q^q}{e^q}} \cdot \frac{\sqrt{4\pi q}\cdot\frac{(2q)^{2q}}{e^{2q}}}{(2q)!}\right)^2 \right) \to \ln 1=0 \text{ for }q\to\infty.
\end{align*}
In a similar way, by the right inequality in~\eqref{eq:pomocnicza} and the Stirling's formula we have
\begin{align*}
r_q(x)&\geq \ln\left(e^{-2q}q^{2q}\frac{1}{\Gamma(q)\Gamma(q+1)}\cdot2\pi \right)\\
&= \ln \left(e^{-2q}q^{2q}\cdot q\cdot(q+1)\cdot \frac{1}{q! \cdot (q+1)!}\cdot 2\pi \right)\\
&= \ln \left(e\cdot \left(\frac{q}{q+1}\right)^{q+1} \cdot \frac{q+1}{\sqrt{q(q+1)}} \cdot \frac{\sqrt{2\pi q} \cdot\frac{q^q}{e^q}}{q!} \cdot \frac{\sqrt{2\pi(q+1)}\cdot\frac{(q+1)^q}{e^{q+1}}}{(q+1)!} \right)\\
&\to \ln(e \cdot e^{-1} \cdot 1 \cdot 1)=\ln 1=0\text{ for }q\to\infty.
\end{align*}
Thus we have shown that $r_q(\cdot)$ converges to $0$ uniformly on the interval $(0,1)$, which by the definition of $\tilde{f}_q$ and $r_q$ completes the proof of the lemma.
\end{proof}

The next lemma together with Lemma~\ref{lm:zbieznosc} and Proposition~\ref{pr:l+p}

will let us find distributions $P$ satisfying the condition~\eqref{eq:zbieznosc} for some $t_n\to\infty$. Let 
\begin{equation}\label{eq:miaranu}
\nu=\left(\ln\left(\frac{1}{2\sin \pi \cdot} \right)\right)_{\ast}(\lambda)
\end{equation}
and let $M_t \colon \R \to \R$ be given by $M_t(x)=tx$ for $t\in\R \setminus \{0\}$. For $t\neq 0$ let 
\begin{equation}\label{eq:nu}
\nu_t=(M_t)_{\ast}(\nu).
\end{equation}

\begin{lm}\label{lm:lemma5}
Let $m\in \Z\setminus \{0\}$ and $\alpha\in\R\setminus\Q$ be such that

\begin{equation}\label{eq:DC}
\liminf_{n \to \infty} q_n^3 \||q_n \alpha\| =0.
\end{equation}
Then for some sequence $n_k\to \infty$ we have
\begin{equation*}
\left(f^{(mq_{n_k})}\right)_{\ast}\lambda \to \nu_m.
\end{equation*}
\end{lm}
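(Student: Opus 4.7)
The strategy is to pair the orbit $\{x + k\alpha \bmod 1 : 0 \le k < m q_{n_k}\}$ with the equispaced multiset $\{x + j/q_{n_k} \bmod 1 : 0 \le j < q_{n_k}\}$ in which each point carries multiplicity $m$, thereby reducing the question to Lemma~\ref{lm:zbieznosc}. The pairing rests on the permutation $\sigma_n(k) = k p_n \bmod q_n$ of $\{0, \ldots, q_n - 1\}$, a bijection since $\gcd(p_n, q_n) = 1$: writing $\alpha = p_n/q_n + \varepsilon_n$ with $|\varepsilon_n| = \eta_n/q_n$ where $\eta_n := \|q_n\alpha\|$, one obtains the uniform estimate
\[
\|k\alpha - \sigma_n(k)/q_n\| \le (q_n - 1)|\varepsilon_n| < \eta_n, \qquad 0 \le k < q_n.
\]

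I would treat first the case $m = 1$, showing that $f^{(q_{n_k})} - f_{q_{n_k}} \to 0$ in $\lambda$-probability. Abbreviating $q = q_{n_k}$ and $\eta = \eta_{n_k}$, decompose $x = x_0 + j_0/q$ with $x_0 \in [0, 1/q)$ and set
\[
G_n := \{x : x_0 \ge \eta^{1/2},\ 1/q - x_0 \ge \eta^{1/2}\},
\]
whose complement has measure $\le 2 q \eta^{1/2} \to 0$ under the hypothesis $q^2 \eta \to 0$ (implied by $q^3 \eta \to 0$). Writing $f^{(q)}(x) - f_q(x) = \sum_{k=0}^{q-1} [f(x + k\alpha) - f(x + \sigma_n(k)/q)]$ and splitting, the $q - 2$ \emph{regular} indices (those with $\sigma_n(k) \notin \{0, q - 1\}$) place both arguments in $[1/q, 1 - 1/q]$ where $|f'| \le 2q$, contributing $O(q^2 \eta)$ in total; the two \emph{singular} indices yield arguments at distance $\ge \eta^{1/2}/2$ from $\{0, 1\}$ on $G_n$, each contributing $O(\eta^{1/2})$. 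Both estimates tend to zero, so $f^{(q)} - f_q \to 0$ in probability. Since $f_q(x) = \tilde f_q(\{qx\})$ and $x \mapsto \{qx\}$ preserves $\lambda$, Lemma~\ref{lm:zbieznosc} yields $(f_q)_*\lambda = (\tilde f_q)_*\lambda \to \nu$, and Slutsky's theorem delivers $(f^{(q_{n_k})})_*\lambda \to \nu = \nu_1$.

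For $m \ge 2$ the analogous comparison is $f^{(m q)} - m f_q$. Decomposing $k = l q + k'$ with $0 \le l < m$, $0 \le k' < q$, one has $k\alpha \equiv \pm l \eta + k'\alpha \pmod 1$ (since $q\alpha = p_n \pm \eta$ and $lp_n \in \Z$), so $k \alpha$ lies within $m \eta$ of $\sigma_n(k')/q$ modulo $1$. Repeating the error analysis with $\eta$ replaced by $m \eta$ and the regular-term count multiplied by $m$, the total error is $O(m^2 q^2 \eta + m^2 \eta^{1/2}) \to 0$. Since $(m f_q)_*\lambda = (M_m)_*(f_q)_*\lambda \to (M_m)_* \nu = \nu_m$, Slutsky's theorem concludes this case. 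For $m < 0$, the identity $f^{(m)} = -f^{(-m)} \circ T^m$ from Section~\ref{se:potokispecjalne}, combined with the $\lambda$-invariance of $T^m$ and the relation $(M_{-1})_* \nu_{|m|} = \nu_m$, reduces the problem to $|m| > 0$.

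The principal obstacle is controlling the two singular indices (per block), where the logarithmic divergence of $f$ near $\{0, 1\}$ makes a perturbation of size $\eta$ potentially unbounded. The threshold $\eta^{1/2}$ in the definition of $G_n$ trades a small $\lambda$-loss, $O(q \eta^{1/2})$, against a small pointwise error, $O(\eta^{1/2})$; this balance makes $q_n^2 \|q_n\alpha\| \to 0$ the natural sufficient condition for the lemma. The stronger hypothesis $q_n^3 \|q_n\alpha\| \to 0$ imposed in Section~\ref{se:potoki} is needed elsewhere, in particular for the $L^2$-boundedness of $f_0^{(q_n)}$ required by Proposition~\ref{pr:l+p}.
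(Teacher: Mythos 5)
Your argument is correct, and it follows the same basic scheme as the paper's proof: match the Birkhoff sum over $q$ (resp.\ $mq$) rotation steps with the grid sum $f_q$ (resp.\ $mf_q$), discard a small set of $x$ for which an argument falls too close to the singularities of $f$, estimate the remaining differences, and finish with Lemma~\ref{lm:zbieznosc}; negative $m$ is reduced to positive $m$ by a reflection. The differences are in the bookkeeping and are worth recording. The paper proves the stronger $L^1$ statement $\int_0^1|f^{(mq)}(x+k\|q\alpha\|)-mf_q(x)|\,dx\to0$, excising intervals of width $\frac{1}{q^3}+|k|\,\|q\alpha\|$ around the grid points; the contribution of the argument nearest a singularity is then of order $q\|q\alpha\|\cdot q^2=q^3\|q\alpha\|$, so the paper's proof genuinely uses $q_{n_k}^3\|q_{n_k}\alpha\|\to0$, handles the $m$ blocks through shifted estimates (via $lq\alpha\equiv\pm l\|q\alpha\|\bmod 1$), and treats $m<0$ through $f^{(-mq)}(x)=-f^{(mq)}(x+m\|q\alpha\|)$. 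You instead prove convergence in measure on a good set of measure $1-O(q\eta^{1/2})$ (with $\eta=\|q\alpha\|$) and conclude by Slutsky, which suffices since only distributional convergence is needed for Proposition~\ref{pr:l+p}; the cut-off $\eta^{1/2}$ balances the two error sources so that only $q_n^2\|q_n\alpha\|\to0$ is actually used, the block shifts are absorbed into the displacement bound $m\eta$, and $m<0$ is handled via $f^{(N)}=-f^{(-N)}\circ T^{N}$ and $\lambda$-invariance — all of this is sound. Two minor points: the singular indices are not literally those with $\sigma_n(k)\in\{0,q-1\}$ but those with $(j_0+\sigma_n(k))\bmod q\in\{0,q-1\}$, i.e.\ the two indices per block whose grid point lands within $1/q$ of $0$ or $1$ — which is clearly what your estimates use; and your closing speculation that the cubic condition is needed for the $L^2$-boundedness of $f_0^{(q_n)}$ does not match the paper, where that boundedness comes from a Koksma-type inequality valid without it — the cubic condition is used by the paper precisely in its own proof of this lemma.
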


\begin{proof}
Let $\alpha\in\R\setminus\Q$ fulfill the condition~\eqref{eq:DC} and let $m\in \N$. Let $\hat{f}(x)=-\ln x$ and let the sequence $\left(q_{n_k}\right)_{k\in\N}$ be such that
\begin{equation}\label{eq:podciag}
\lim_{k\to\infty}q_{n_k}^3 \|q_{n_k}\alpha\|=0.
\end{equation}
Without loss of generality we may assume that $\frac{p_{n_k}}{q_{n_k}}<\alpha$, i.e. $\|q_{n_k}\alpha\|=\left\{q_{n_k}\alpha\right\}$. To make the proof more readable, we will write $q$ instead of $q_{n_k}$. We claim that for any $k \in \Z$
\begin{equation*}
\left\|\hat{f}^{(q)}(x+k\|q\alpha\|)-\hat{f}_q(x))\right\|_{L^1} \to 0
\end{equation*}
(we define the function $\hat{f}_q$ in the same way as $f_q$ in~\eqref{eq:f_q} was defined). We fix $k\in\Z$. Let $n_0\in\N$ be sufficiently large, so that
\begin{equation}\label{eq:blisko}
\left\|q_{n_0}\alpha \right\|<\frac{1}{q_{n_0}^3} \text{ and }\frac{|k|+1}{q_{n_0}^3}<\frac{1}{2q_{n_0}}.
\end{equation}
For $q=q_n$ such that $n>n_0$ let 
\begin{equation*}
A_q= \bigcup_{j=0}^{q-1} \left[\frac{j}{q}+\frac{1}{q^3}+|k|\left\|q\alpha\right\|, \frac{j+1}{q}-\frac{1}{q^3}-|k|\left\|q\alpha \right\| \right].
\end{equation*}
Take $x\in A_q$. By the definition of the set $A_q$,  $x=\frac{j_0+t}{q}$ for some $0\leq j_0 \leq q-1$ and $t\in \left[\frac{1}{q^2}+|k|q\|q\alpha\|,1-\frac{1}{q^2}-|k|q\|q\alpha\| \right]$. Notice that each of the intervals $\left[\frac{j}{q},\frac{j+1}{q}\right)$ for $0\leq j\leq q-1$ contains exactly one point of the form $\{x+k\left\|q\alpha\right\|+k_j \alpha\}$, where $0\leq k_j\leq q-1$. Indeed, let $k_j$ be the only natural number in $[0, q-1]$ such that
\begin{equation}~\label{eq:where}
\left\{x+k_j \frac{p}{q}\right\} \in \left[\frac{j}{q}+\frac{1}{q^3}+|k|\left\|q\alpha\right\|,\frac{j+1}{q}-\frac{1}{q^3}-|k|\left\|q\alpha \right\| \right).
\end{equation}
We claim that $\left\{x+k\|q\alpha\|+k_j\alpha\right\} \in \left[\frac{j}{q},\frac{j+1}{q} \right)$. By~\eqref{eq:where} and 
\begin{multline*}
\left| k_j \left(\alpha-\frac{p}{q} \right)+k \|q\alpha \| \right| \\
\leq q\left| \alpha-\frac{p}{q}\right|+ |k|\|q\alpha\|= \|q\alpha \|+|k|\|q\alpha\|\leq \frac{1}{q^3}+|k|\|q\alpha\|,
\end{multline*}
we have
\begin{multline*}
\left\{x+ k\|q\alpha\|+k_j \alpha \right\}= \left\{x+k_j\frac{p}{q}+k_j \left(\alpha-\frac{p}{q} \right)+k\|q\alpha \| \right\}\\
=\left\{x+ k_j\frac{p}{q} \right\}+k_j \left(\alpha-\frac{p}{q} \right)+k\|q\alpha\| \in \left[\frac{j}{q},\frac{j+1}{q} \right).
\end{multline*}
The above calculations show also that
\begin{multline*}
\left\{x+k\|q\alpha\|+k_j\alpha \right\}=\frac{j}{q}+\frac{t}{q}+k_j\left(\alpha-\frac{p}{q} \right)+ k\|q\alpha\|\\
=\frac{j+t+k_j\left(q\alpha-p \right)+kq\|q\alpha\|}{q}=\frac{j+t+k_j\|q\alpha\|+kq\|q\alpha\|}{q}
\end{multline*}
Now, using the uniqueness of $k_j\in [0,q-1]$ in the above arguments, we can estimate $\left|\hat{f}^{(q)}(x+k\|q\alpha\|)-\hat{f}_q(x) \right|$ for $x \in A_q$. If $k \geq 0$,  we have
\begin{align*}
\left|\hat{f}^{(q)}(x+k\|q\alpha\|)-\hat{f}_q(x) \right|&=\sum_{j=0}^{q-1}\left(\ln\frac{j+t+k_j\|q\alpha\|+kq\|q\alpha\| }{q} -\ln\frac{j+t}{q}\right)\\
&\leq \sum_{j=0}^{q-1}\ln\left(1+\frac{(k+1)q\|q\alpha\|}{j+t} \right)\\
&\leq (k+1)q\|q\alpha\|\sum_{j=0}^{q-1}\frac{1}{j+t}\\
&\leq (k+1)q\|q\alpha\|\left(\frac{2}{t}+\ln q \right)\\
&\leq (k+1)q\|q\alpha\|(2q^2+\ln q) \to 0,
\end{align*}
where the last inequality follows by the fact that $t\geq \frac{1}{q^2}$, and the convergence is a direct consequence of~\eqref{eq:podciag}. If $k<0$, we proceed in a similar way. We have $k_j+kq<0$ for $0\leq k_j \leq q-1$. Moreover, if $q$ is sufficiently large then
\begin{equation*}
(k_j+kq)\|q\alpha\| \geq kq\|q\alpha \|=-|k|q\|q\alpha\|>-\frac{1}{2} \left(\frac{1}{q^2}+|k|q\|q\alpha\| \right) \geq -\frac{t}{2},
\end{equation*}
where the last inequality holds for all $t\in \left[\frac{1}{q^2}+|k|q\|q\alpha\|,1-\frac{1}{q^2}-|k|q\|q\alpha\| \right]$.
Therefore for any such $q$
\begin{align*}
\left|\hat{f}^{(q)}(x+k\|q\alpha\|)-\hat{f}_q(x) \right|&=\sum_{j=0}^{q-1}\left(\ln\frac{j+t}{q}-\ln\frac{j+t+k_j\|q\alpha\|+kq\|q\alpha\| }{q}\right)\\
&=\sum_{j=0}^{q-1}\ln \left(1+\frac{-(k_j+kq)\|q\alpha\|}{j+t+k_j\|q\alpha\|+kq\|q\alpha\|} \right)\\
&\leq \sum_{j=0}^{q-1}\ln \left(1+\frac{-(k_j+kq)\|q\alpha\|}{j+\frac{t}{2}} \right)\\
&\leq\sum_{j=0}^{q-1} -(k_j+kq)\|q\alpha\|\frac{1}{j+\frac{t}{2}}\leq \sum_{j=0}^{q-1} |k|q\|q\alpha\|\frac{1}{j+\frac{t}{2}}\\
&\leq |k|q\|q\alpha\|\left(\frac{4}{t}+\ln q \right)\\
&\leq |k|q\|q\alpha\|(4q^2+\ln q) \to 0,
\end{align*}
where again the last inequality follows by $t\geq \frac{1}{q^2}$, and the convergence is a direct consequence of~\eqref{eq:podciag}. 
Therefore for any $k\in\Z$ and $x\in A_q$
\begin{equation}\label{eq:unif}
\left|\hat{f}^{(q)}(x+k\|q\alpha\|)-\hat{f}_q(x) \right| \leq A(q) \text{, where }A(q) \to 0 \text{ for }q\to \infty. 
\end{equation}
Let $\overline{f}\colon (0,1) \to \R$ be given by the formula $\overline{f}(x)=\hat{f}(1-x)$. We have
\begin{equation*}
\overline{f}_q(1-x)=\hat{f}_q(x) \text{ and }\overline{f}^{(q)}(1-x)=\sum_{j=0}^{q-1}\hat{f}(x-j\alpha).
\end{equation*}
Since $\|q_n\alpha\|=\|q_n(1-\alpha)\|$, it follows by~\eqref{eq:podciag} that
\begin{equation*}
\lim_{k\to \infty} q_{n_k}^3 \|q_{n_k}(1-\alpha)\|=0.
\end{equation*}
Notice that the denominators $q_n$ in the continued fraction expansions of $\alpha$ and $1-\alpha$ are the same. Hence the sets $A_q$ are symmetric with respect to the point $1/2$, and the arguments used to justify~\eqref{eq:unif} can be applied also to the function $\overline{f}$. Therefore for any $k\in\Z$ we obtain that
\begin{equation}\label{eq:unif2}
\left|\overline{f}^{(q)}(x+k\|q\alpha\|)-\overline{f}_q(x) \right|\to 0 \text{ uniformly on }A_q.
\end{equation}

We will estimate now $\int_{[0,1)\setminus A_q} \left|\hat{f}^{(q)}(x+k\|q\alpha\|) - \hat{f}_q(x) \right|dx$. Notice that by~\eqref{eq:blisko}
\begin{equation*}
\lambda([0,1)\setminus A_q)=2q\left(\frac{1}{q^3}+|k|\|q\alpha\|\right)<2q\left(\frac{1}{q^3}+\frac{|k|}{q^3} \right)=\frac{2(|k|+1)}{q^2}.
\end{equation*}
Since $\hat{f}$ is a decreasing positive function and $\frac{2|k|+1}{q^2}<\frac{2}{q^2}$, we obtain (for $q$ such that $\frac{2(|k|+1)}{q^2}<e^{-1}$)
\begin{multline}\label{eq:calka}
\int_{[0,1)\setminus A_q} |\hat{f}^{(q)}(x+k\|q\alpha\|)- \hat{f}_q(x)|\ dx \leq 2q\int_{0}^{\frac{2(|k|+1)}{q^2}} \hat{f}(x)-1\ dx\\
=2q\left(-x\ln x\right)\big|_{x=0}^{\frac{2(|k|+1)}{q^2}}<-\frac{4}{q}\ln{\frac{2}{q^2}} \to 0 \text{ for }q \to \infty.
\end{multline}
In a similar way,
\begin{equation}\label{eq:calka2}
\int_{[0,1)\setminus A_q} |\overline{f}^{(q)}(x+k\|q\alpha\|)- \overline{f}_q(x)|dx \to 0 \text{ for }q \to \infty.
\end{equation}
Combining~\eqref{eq:unif},~\eqref{eq:unif2},~\eqref{eq:calka} and~\eqref{eq:calka2}, we obtain
\begin{equation*}
\int_{[0,1)}  |{f}^{(q)}(x+k\|q\alpha\|)- {f}_q(x)|dx \to 0 \text{ for }q \to \infty.
\end{equation*}

For $m>0$ we have
\begin{multline*}
\int_{[0,1)} \left|f^{(mq)}(x+k\|q\alpha\|)-mf_q(x) \right| dx\\
\leq \sum_{k=0}^{m-1} \int_{[0,1)} \left| f^{(q)}(x+k\|q\alpha\|)-f_q(x) \right| dx \to 0 \text{ for }q\to\infty.
\end{multline*}
Since for $m\in\Z$  $f^{(-mq)}(x)=-f^{(mq)}(x+m\|q\alpha\|)$, therefore for $m<0$ we obtain
\begin{multline*}
\int_{[0,1)} \left|f^{(mq)}(x)-mf_q(x) \right| dx \\
= \int_{[0,1)} \left|-f^{(-mq)}(x+m\|q\alpha\|)-mf_q(x) \right| dx \\
= \int_{[0,1)} \left|f^{(-mq)}(x+m\|q\alpha\|)-(-m)f_q(x) \right| dx \\
\leq \sum_{k=0}^{-m-1} \int_{[0,1)} \left| f^{(q)}(x+m\|q\alpha\|+k\|q\alpha\|)-f_q(x) \right| dx \to 0 \text{ for }q\to\infty.
\end{multline*}
Since the function $mf_q$ has the same distribution as $m\tilde{f}_q$ (the function $\tilde{f}_q$  was defined in~\eqref{eq:falka}), therefore it suffices to use Lemma~\ref{lm:zbieznosc} to deduce that
\begin{equation*}
\left(f^{(mq_{n_k})}\right)_{\ast}\lambda \to \nu_m.
\end{equation*}
This ends the proof.
\end{proof}

By the proof of Theorem 2.7 in~\cite{almn}\footnote{This theorem is a version of the Koksma inequality in the Banach space of functions whose Fourier coefficients are of order $\textrm{O} \left(\frac{1}{n}\right)$.} it follows that the sequence $\left(f^{(q_n)}\right)_{n\in\N}$ is bounded in $L^2(\T,\lambda)$. Therefore Proposition~\ref{pr:l+p} and Lemma~\ref{lm:lemma5} yield the following corollary.
\begin{wn}
Let $m\in \Z\setminus \{0\}$ and $\alpha\in\R\setminus\Q$ be such that
\begin{equation}
\liminf_{n \to \infty} q_n^3 \||q_n \alpha\| =0.
\end{equation}
Then for some $t_n \to \infty$
\begin{equation*}
U_{t_n} \to \int_{\R}U_{T_t} d\nu_m(t).
\end{equation*}
\end{wn}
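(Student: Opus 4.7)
The plan is to combine Lemma~\ref{lm:lemma5} with Proposition~\ref{pr:l+p}, applied to (a normalization of) the roof function $F:=f+f_1+c$ of $\cT$ along the subsequence $(mq_{n_k})$. Four hypotheses of Proposition~\ref{pr:l+p} must be checked.

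First, $(mq_{n_k})$ is a rigidity sequence for $T$: from the Diophantine hypothesis $\|mq_{n_k}\alpha\|\leq|m|\cdot\|q_{n_k}\alpha\|\to 0$. Second, $(F_0^{(mq_{n_k})})$ is bounded in $L^2$: a direct computation gives $\int f\,d\lambda=0$, so that $F_0=f+f_1$; the reference to \cite{almn} stated just before the corollary yields $L^2$-boundedness of $(f^{(q_n)})$, hence of $(f^{(mq_n)})$ via the telescoping $f^{(mq_n)}=\sum_{j=0}^{|m|-1}f^{(q_n)}\circ T^{jq_n}$ (with the obvious modification for $m<0$); and the Denjoy-Koksma inequality gives $f_1^{(mq_n)}\to 0$ uniformly.

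Third, the uniform decay of $f_1^{(mq_{n_k})}$ implies that $(F_0^{(mq_{n_k})})_*\lambda$ has the same weak limit as $(f^{(mq_{n_k})})_*\lambda$, which by Lemma~\ref{lm:lemma5} equals $\nu_m$. Fourth, since $F>0$ is integrable with positive mean $c$, Birkhoff's theorem gives $F^{(k)}/k\to c$ a.e., providing the required linear lower bound. Proposition~\ref{pr:l+p} then delivers $U_{T_{mq_{n_k}}}\to\int U_{T_{-t}}\,d\nu_m(t)=\int U_{T_t}\,d\nu_{-m}(t)$, where the last equality uses $(M_{-1})_*\nu_m=\nu_{-m}$ and the substitution $t\mapsto -t$.

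Applying this conclusion with $-m$ in place of $m$ and setting $t_n:=-mq_{n_k}$ (so that $|t_n|\to\infty$) gives $U_{T_{t_n}}\to\int U_{T_t}\,d\nu_m(t)$ for every $m\in\Z\setminus\{0\}$, as required. The only mildly technical points are the bookkeeping of the sign of $t$ in the integrand (handled by the symmetry $\pm m\in\Z\setminus\{0\}$) and the normalization $\int F\,d\lambda=1$ demanded by Proposition~\ref{pr:l+p}, which is absorbed by a linear time rescaling of $\cT$; this rescaling changes $\nu_m$ only by a push-forward under some $M_t$ and is already contained in the family $\{\nu_t\}_{t\neq 0}$ defined in~\eqref{eq:nu}. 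I expect the main subtlety to lie in ensuring that the pointwise linear lower bound on $F^{(k)}$ is compatible with the precise formulation of Proposition~\ref{pr:l+p} (since $F$ has logarithmic singularities), but this is resolved by the ergodic theorem once one recalls that $F$ is bounded below by a strictly positive constant off a set where the logarithmic terms dominate and trivially satisfy the required inequality.
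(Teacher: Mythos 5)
Your proposal is correct and follows essentially the paper's own route: the paper likewise obtains the corollary by feeding Lemma~\ref{lm:lemma5} into Proposition~\ref{pr:l+p}, discarding $f_1$ via Denjoy--Koksma and quoting~\cite{almn} for the $L^2$-boundedness, and your extra care about the normalization $\int F\,d\lambda=1$ and the pointwise lower bound $F\geq\inf F>0$ only makes explicit what the paper leaves implicit. The single point to tighten is the last step: for $m>0$ your times $t_n=-mq_{n_k}$ tend to $-\infty$, so to get $t_n\to+\infty$ as stated one should pass to adjoints, using that $U_{T_{-t}}=U_{T_t}^{*}$ and $\bigl(\int U_{T_t}\,d\nu_{-m}(t)\bigr)^{*}=\int U_{T_t}\,d\nu_{m}(t)$, which is exactly the $\pm m$ symmetry you invoke.
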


\begin{uw}
It is easy to see that the measures $\nu_m$ are absolutely continuous: $\nu_m=(M_m \circ (\ln\frac{1}{2 \sin \pi \cdot}))_{\ast}(\lambda)$ and $M_m \circ (\ln\frac{1}{2 \sin \pi \cdot})$ has two inverse branches which are both absolutely continuous. The densities of the measures $\nu_m$ will be calculated in the next section.
\end{uw}

\subsubsection{Densities of limit distributions}
Let us first introduce the necessary notation. Let $\oh\colon\R\to\R$ be given by
\begin{equation*}
\oh(x)= \left\{
	\begin{array}{rl}
		\frac{1}{\sqrt{e^{2x}-1}} & \text{ for } x>0\\
		0 & \text{ for } x\leq 0.
	\end{array}
\right.
\end{equation*}
Let
\begin{equation*} 
h(x) = \left\{
\begin{array}{rl} \frac{2}{\pi}(4e^{2x}-1)^{-\frac{1}{2}} & \text{ for } x  > -\ln 2\\
0 & \text{ for } x \leq -\ln 2 \end{array} \right.
\end{equation*}
and for $t\neq 0$ let $h_t(x)=\frac{1}{|t|}h\left(\frac{x}{t}\right)$.

\begin{uw}\label{lm:gest}
Notice that given an absolutely continuous measure $\mu$ on $\R$ for $t\neq 0$ and $f\in L^1(\R,(M_t)_{\ast}(\mu))$ we have
\begin{multline*}
\int_{\R} f(x) d (M_t)_{\ast}(\mu)(x)\\
= \int_{\R} f(tx) d \mu(x)=\int_{\R}f(tx)\frac{d\mu}{d\lambda}(x)dx=\int_{\R}f(y)\frac{1}{|t|}\frac{d\mu}{d\lambda}\left(\frac{y}{t} \right)dy.
\end{multline*}
Therefore the density of $(M_t)_{\ast}(\mu)$ for $t\neq 0$ is given by $\frac{1}{|t|}\frac{d\mu}{d\lambda}\left(\frac{\cdot}{t}\right)$.
\end{uw}

\begin{lm}\label{lm:lemma7}
The density of the measure $\nu_t$ for $t\neq 0$\footnote{The measure $\nu_t$ is defined on page~\pageref{eq:nu}, see~\eqref{eq:nu}.} is given by $h_t$.
\end{lm}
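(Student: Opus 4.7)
The plan is to reduce the computation to the case $t=1$ using Remark~\ref{lm:gest}, which tells us that the density of $\nu_t=(M_t)_\ast(\nu)$ is $\frac{1}{|t|}\frac{d\nu}{d\lambda}(\cdot/t)$. Since $h_t(x)=\frac{1}{|t|}h(x/t)$ by definition, it suffices to show $\frac{d\nu}{d\lambda}=h$ on $(-\ln 2,\infty)$ and zero elsewhere. Recall from~\eqref{eq:miaranu} that $\nu=g_\ast(\lambda)$ with $g\colon (0,1)\to\R$ given by $g(x)=\ln\!\left(\tfrac{1}{2\sin\pi x}\right)=-\ln(2\sin\pi x)$.

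Next I would analyse the map $g$. It is smooth on $(0,1)$, symmetric about $x=\tfrac12$ where it attains its minimum value $-\ln 2$, and tends to $+\infty$ at both endpoints. Thus for each $y>-\ln 2$ the equation $g(x)=y$ has exactly two solutions
\[
x_1=\tfrac{1}{\pi}\arcsin\!\left(\tfrac{e^{-y}}{2}\right)\in(0,\tfrac12),\qquad x_2=1-x_1\in(\tfrac12,1),
\]
while for $y\le -\ln 2$ the fibre is empty (so the density vanishes there, consistent with the definition of $h$). Computing $g'(x)=-\pi\cot(\pi x)$, one has $|g'(x_i)|=\pi\cdot\frac{|\cos\pi x_i|}{\sin\pi x_i}$, and since $\sin\pi x_i=e^{-y}/2$ and $|\cos\pi x_i|=\sqrt{1-e^{-2y}/4}$ at both points, the change-of-variables formula gives
\[
\frac{d\nu}{d\lambda}(y)=\sum_{i=1,2}\frac{1}{|g'(x_i)|}=2\cdot\frac{e^{-y}/2}{\pi\sqrt{1-e^{-2y}/4}}=\frac{2}{\pi\sqrt{4e^{2y}-1}},
\]
after multiplying numerator and denominator by $2e^{y}$. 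This is exactly $h(y)$, so $\frac{d\nu}{d\lambda}=h$, and applying Remark~\ref{lm:gest} yields the density of $\nu_t$ equal to $h_t$.

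There is no real obstacle here: the only point to be careful about is that $g$ is two-to-one, so the density acquires the factor of $2$ coming from summing over the two preimage branches (this matches the remark in the previous corollary that $M_m\circ(\ln\frac{1}{2\sin\pi\cdot})$ has two inverse branches). Once that is handled, the identity $4\sin^2\pi x_i\cdot(1-\sin^2\pi x_i)\cdot \text{(rearrangement)}=4e^{2y}-1$ is a routine algebraic simplification.
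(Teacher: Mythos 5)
Your proposal is correct and follows essentially the same route as the paper: reduce to $t=1$ via Remark~\ref{lm:gest}, then exploit the two $\arcsin$ inverse branches of $x\mapsto\ln\frac{1}{2\sin\pi x}$ (with the resulting factor $2$) to get the density $\frac{2}{\pi}(4e^{2y}-1)^{-1/2}$ on $(-\ln 2,\infty)$. The paper phrases this by computing $\nu(a,b)$ as a Lebesgue measure of an $\arcsin$ interval and differentiating, while you apply the change-of-variables density formula $\sum_i 1/|g'(x_i)|$ directly, which is the same computation in differentiated form.
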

\begin{proof}
We will show first that $h_1=h$ is the desity of the measure $\nu$. Indeed, for $b>a>-\ln 2$ we have
\begin{multline*}
\left(\left(\ln \frac{1}{2\sin \pi \cdot} \right)_{\ast} \lambda \right)(a,b)= \lambda\left( \{x\in (0,1) \colon \ln 2 \sin \pi x \in (-b,-a)\}\right)\\
= \lambda \left(\left\{ x\in (0,1) \colon \sin \pi x \in \left(\frac{e^{-b}}{2},\frac{e^{-a}}{2} \right)   \right\}\right)\\
= 2\lambda \left(\frac{1}{\pi}\arcsin \frac{e^{-b}}{2}, \frac{1}{\pi}\arcsin \frac{e^{-a}}{2}\right)=\int_a^b \frac{2}{ \pi}(4e^{2x}-1)^{-\frac{1}{2}}\ dx,
\end{multline*}
since the derivative of the function $2\arcsin (\frac{1}{2}e^{-x})$ is equal to $-\frac{2}{\pi}(4e^{2x}-1)^{-\frac{1}{2}} $. The claim for $t=1$ follows by the following inequality for $x \in (0,1)$:
\begin{equation*}
\ln\frac{1}{2\sin \pi x}\geq\ln \frac{1}{2\sin\frac{\pi}{2}}=-\ln 2.
\end{equation*}
To complete the proof it suffices to use Remark~\ref{lm:gest}.
\end{proof}

Let the function $v\colon (0,\infty)\to \R$ be given by the formula $v(x)=\sqrt{\frac{x}{e^{2x}-1}}$.

\begin{lm}\label{lm:funk}
The function $v$ is analytic on the interval $[0,+\infty)$.
\end{lm}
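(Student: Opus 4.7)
The plan is to factor $v(x)^2 = x/(e^{2x}-1)$ and recognise it as the reciprocal of an entire function which is nonvanishing on a complex neighbourhood of $[0,\infty)$; the lemma then reduces to composing with an analytic branch of $\sqrt{\cdot}$.

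First I would observe that $e^{2z}-1$ is entire with zeros precisely at $z=\pi i k$, $k\in\Z$, all lying on the imaginary axis. Since $z=0$ is a simple zero, the quotient $g(z):=(e^{2z}-1)/z$ extends to an entire function with $g(0)=2$, and on the open strip $U:=\{z\in\C : |\Im z|<\pi\}$ the only potential zero of $e^{2z}-1$ was $z=0$, which has been cancelled. Hence $g$ is analytic and nonvanishing on $U$, so $1/g$ is analytic on $U$ and coincides with $x/(e^{2x}-1)$ for real $x\in[0,\infty)$, taking there strictly positive values (with $1/g(0)=1/2$).

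Next I would produce an open complex neighbourhood $V\subset U$ of $[0,\infty)$ on which $1/g$ avoids the branch cut $(-\infty,0]$ of the principal square root. Since $1/g$ is continuous and positive on $[0,\infty)$, each $x_0\in[0,\infty)$ has an open disk in $U$ on which $\Re(1/g)>0$; letting $V$ be the union of such disks gives the required neighbourhood. On $V$ the principal branch of $\sqrt{\cdot}$ is analytic, and the composition $z\mapsto \sqrt{1/g(z)}$ is analytic on $V$ and restricts to $v$ on $[0,\infty)$. This shows that $v$ is analytic at every point of $[0,\infty)$, including the endpoint $0$.

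The only potentially delicate step is identifying the zero set of $e^{2z}-1$ in $\C$, but this is a textbook fact ($e^{2z}=1 \iff z\in\pi i\Z$). Once that is in hand, everything reduces to routine composition of analytic functions together with a straightforward continuity argument to avoid the branch cut, so I do not anticipate any serious obstacle.
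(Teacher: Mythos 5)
Your proposal is correct and follows essentially the same route as the paper: both express $v$ as the composition of the analytic, positive function $x\mapsto x/(e^{2x}-1)$ (extended through $x=0$ with value $1/2$) with a branch of the square root analytic off $(-\infty,0]$. Your version merely spells out the details (zero set of $e^{2z}-1$, the strip, the neighbourhood avoiding the branch cut) that the paper leaves implicit.
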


\begin{proof}
Notice that the function $(0,+\infty) \ni x \mapsto \frac{x}{e^{2x}-1} \in \R$ is analytic. Moreover, for $x>0$ we have $\frac{x}{e^{2x}-1}>0$ and 
\begin{equation}\label{grdwa}
\lim_{x\to 0} \frac{x}{e^{2x}-1}=\frac{1}{2}.
\end{equation} 
Using the fact that the square root has an analytic branch in $\C \setminus \{z\in\C \colon \text{Re}(z)<0 \text{ and }\text{Im}(z)=0 \}$, we conclude that also the function $v$  is analytic. By~\eqref{grdwa}, the function $v$ can be extended in an analytic way to the interval $(-\varepsilon,+\infty)$ for $\varepsilon>0$ sufficiently small. This ends the proof.
\end{proof}

Denote by $a_n$ the coefficients in the Taylor series expansion of $v$ around zero. This means that for $x>0$ small enough we have
\begin{equation*}
v(x)=\sqrt{\frac{x}{e^{2x}-1}}=\sum_{n=0}^{\infty}a_n x^n.
\end{equation*}
Calculating the consecutive derivatives of $v$, we can easily find a few first terms of this expansion. We have
\begin{equation*}
a_0=\frac{\sqrt{2}}{2},\ a_1=-\frac{\sqrt{2}}{4},\ a_2=\frac{\sqrt{2}}{48},\ a_3=\frac{\sqrt{2}}{96}.
\end{equation*}

\begin{lm}\label{lm:qq}
For any $k\in\N$
\begin{equation}\label{eq:pocho}
\frac{d^n}{dx^n}(\overline{h})(x)=\frac{\sum_{k=0}^{n}w_{k}e^{2kx}}{(e^{2x}-1)^{n+\frac{1}{2}}}
\end{equation}
for some $w_k\in\R$ dependent on $n$.
\end{lm}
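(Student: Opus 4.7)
The plan is a direct induction on $n$. For $n = 0$, the formula reduces to $\overline{h}(x) = (e^{2x}-1)^{-1/2}$, matching the asserted form with $w_0 = 1$.

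For the inductive step, assume the formula holds for some $n \geq 0$ with coefficients $w_0,\dots,w_n$. Writing the inductive hypothesis as a quotient
$$
\frac{d^n}{dx^n}\overline{h}(x) = \frac{P_n(e^{2x})}{(e^{2x}-1)^{n+1/2}}, \qquad P_n(y) = \sum_{k=0}^{n} w_k y^k,
$$
and differentiating once more by the quotient rule, one gets
$$
\frac{d^{n+1}}{dx^{n+1}}\overline{h}(x) = \frac{\bigl(\sum_{k=0}^{n} 2k\, w_k e^{2kx}\bigr)(e^{2x}-1) \;-\; (2n+1)\,e^{2x}\sum_{k=0}^{n} w_k e^{2kx}}{(e^{2x}-1)^{n+3/2}}.
$$
After expanding the products $e^{2kx}(e^{2x}-1) = e^{2(k+1)x} - e^{2kx}$ and $e^{2x}\cdot e^{2kx} = e^{2(k+1)x}$, and collecting terms by equal powers of $e^{2x}$, the numerator becomes a polynomial in $e^{2x}$ of degree at most $n+1$. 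Hence it takes the form $\sum_{k=0}^{n+1} w'_k e^{2kx}$ for suitable $w'_k \in \R$, which is exactly the shape required at step $n+1$.

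If a more explicit bookkeeping is desired, one reads off the recursion $w'_0 = 0$, $w'_{n+1} = -w_n$, and $w'_k = (2k-2n-3)\,w_{k-1} - 2k\,w_k$ for $1 \leq k \leq n$, but this is not needed for the assertion in the lemma. I do not anticipate any genuine obstacle here: the argument is entirely mechanical, and the only thing worth pointing out is that the denominator exponent jumps correctly from $n+1/2$ to $n+3/2$ because differentiating $(e^{2x}-1)^{-(n+1/2)}$ produces a factor of $(e^{2x}-1)^{-(n+3/2)}$ together with $(2n+1)e^{2x}$, while differentiating $P_n(e^{2x})$ keeps the denominator exponent unchanged — so after reducing over the common denominator $(e^{2x}-1)^{n+3/2}$, one indeed obtains a polynomial numerator of the required degree.
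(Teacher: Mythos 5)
Your proof is correct and follows essentially the same route as the paper: induction on $n$, differentiating the quotient $\sum_k w_k e^{2kx}/(e^{2x}-1)^{n+1/2}$ and observing that expanding $e^{2kx}(e^{2x}-1)$ and $e^{2x}e^{2kx}$ yields a numerator of the form $\sum_{k=0}^{n+1}w'_k e^{2kx}$ over $(e^{2x}-1)^{n+3/2}$. The paper differentiates a single generic term rather than the whole sum, but this is the same mechanical induction, and your explicit recursion for the $w'_k$ checks out.
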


\begin{proof}
The proof will be inductive with respect to $n$. For $n=0$ the formula~\eqref{eq:pocho} clearly holds. Suppose that this formula holds for some $n\in \N$. Notice that for any $0\leq k\leq n$ we have
\begin{multline*}
\frac{d}{dx}\left(\frac{e^{2kx}}{(e^{2x}-1)^{n+\frac{1}{2}}}\right)\\
=\frac{2ke^{2kx}(e^{2x}-1)^{n+\frac{1}{2}}-(n+\frac{1}{2})(e^{2x}-1)^{n-\frac{1}{2}}2e^{2x}e^{2kx}}{(e^{2x}-1)^{2n+1}}\\
=\frac{2ke^{2kx}(e^{2x}-1)-2e^{2x}e^{2kx}(k+\frac{1}{2})}{(e^{2x}-1)^{n+\frac{3}{2}}},
\end{multline*}
which ends the proof.
\end{proof}

\begin{lm}\label{lm:wlasnoscw}
For any $n\in \N$ the condition $\mathcal{W}\left(x\mapsto x^n \cdot\frac{d^n}{dx^n} \oh(x)\right)$ holds.
\end{lm}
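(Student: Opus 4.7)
The plan is to use the explicit formula for $\frac{d^n}{dx^n}\overline{h}(x)$ provided by Lemma~\ref{lm:qq} and then verify conditions $\mathcal{W}_1$ and $\mathcal{W}_2$ separately, only on the half-line $x>0$ (since $\overline{h}$ vanishes identically on $(-\infty,0]$, all its derivatives vanish there, so the estimates for $x<0$ are trivial).

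First I would write, for $x>0$,
$$x^n\cdot \frac{d^n}{dx^n}\overline{h}(x)=\frac{x^n\sum_{k=0}^{n}w_{k}e^{2kx}}{(e^{2x}-1)^{n+1/2}}.$$
For the tail estimate ($\mathcal{W}_1$), I would take $x\geq 1$ (say), use the elementary bound $e^{2x}-1\geq \tfrac{1}{2}e^{2x}$, which gives $(e^{2x}-1)^{n+1/2}\geq 2^{-(n+1/2)}e^{(2n+1)x}$, and bound the numerator by $(\sum_k|w_k|)\,x^n e^{2nx}$. This yields
$$\left|x^n\tfrac{d^n}{dx^n}\overline{h}(x)\right|\leq C\, x^n e^{-x}$$
for a constant $C>0$ depending on $n$. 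Absorbing the polynomial factor $x^n$ into the exponential (i.e. $x^ne^{-x}\leq C_n e^{-x/2}$ for all $x\geq 1$) gives $\mathcal{W}_1$ with any $A$ sufficiently large and $t=1$.

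For the behaviour near zero ($\mathcal{W}_2$), on $0<x\leq 1$ I would exploit the elementary inequality $e^{2x}-1\geq 2x$ (immediate from the Taylor expansion $e^{2x}=1+2x+2x^2+\cdots$), giving $(e^{2x}-1)^{n+1/2}\geq (2x)^{n+1/2}$. Since the numerator $\sum_{k=0}^{n}w_ke^{2kx}$ is uniformly bounded on $[0,1]$ by $e^{2n}\sum_k|w_k|$, we get
$$\left|x^n\tfrac{d^n}{dx^n}\overline{h}(x)\right|\leq \frac{C\,x^n}{(2x)^{n+1/2}}=\frac{C'}{\sqrt{x}}=C'\,x^{-1/2},$$
which is precisely $\mathcal{W}_2$ with $r=-1/2$ and $t=1$. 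Choosing $A$ large enough to dominate both constants and taking $t=1$ then gives $\mathcal{W}\left(x\mapsto x^n\frac{d^n}{dx^n}\overline{h}(x),A,-\tfrac{1}{2},1\right)$.

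There is no genuine obstacle here; the argument is a direct computation once Lemma~\ref{lm:qq} provides the closed form of the derivatives. The only point one has to be slightly careful about is that the exponent $r=-1/2$ in $\mathcal{W}_2$ is independent of $n$: this is crucial later when one wants uniform constants for a family of derivatives, and it is exactly what the factor $x^n$ in the statement is designed to achieve, cancelling the singularity of order $n+1/2$ down to $1/2$.
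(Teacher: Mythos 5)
Your proof is correct and follows essentially the same route as the paper: both start from the closed form of $\frac{d^n}{dx^n}\oh$ given by Lemma~\ref{lm:qq} and then verify $\mathcal{W}_1$ and $\mathcal{W}_2$ by showing exponential decay at infinity and an $x^{-1/2}$ bound near zero, the only cosmetic difference being that you use the explicit inequalities $e^{2x}-1\geq 2x$ and $e^{2x}-1\geq \tfrac12 e^{2x}$ where the paper computes the corresponding limits at $0^+$ and $\infty$ and invokes continuity.
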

\begin{proof}
Using Lemma~\ref{lm:qq}, we obtain
\begin{equation*}
\left|\frac{d^n}{dx^n}(\overline{h})(x)\right|=\frac{\sum_{k=0}^{n}|w_{k}|\cdot e^{2kx}}{(e^{2x}-1)^{n+\frac{1}{2}}}\leq \frac{\sum_{k=0}^{n}|w_{k}|\cdot e^{2nx}}{(e^{2x}-1)^{n+\frac{1}{2}}}.
\end{equation*}
Notice that
$$
\lim_{x\to 0^+} x^{1/2}\cdot\frac{x^{n}\cdot e^{2nx}}{(e^{2x}-1)^{n+1/2}}=2^{-n-1/2},
$$
$$
\lim_{x\to \infty}e^{x/2}\cdot \frac{x^n\cdot e^{2nx}}{(e^{2x}-1)^{n+1/2}} =0
$$
and function $\oh$ is smooth. Therefore there exist $t>0$ and $A>1$ such that for $x> t$ we have 
$$
\left|x^n\cdot \frac{d^n}{dx^n}\oh(x) \right| < A e^{-x/2}
$$
and for $0<x<t$ we have
$$
\left|x^n\cdot \frac{d^n}{dx^n}\oh(x) \right|< A x^{-1/2}.
$$
The claim follows.
\end{proof}

\begin{lm}\label{lm:kod}
Let $\sum_{n=0}^{\infty}a_nx^n$ be the Taylor series expansion around zero of the function $v\colon (0,\infty)\to\R$ given by the formula
$$
v(x)=\sqrt{\frac{x}{e^{2x}-1}}.
$$
Then the functions $c_n$ of the variables $t_1,t_2,t_3$ given by
\begin{equation}\label{eq:cn}
c_n(t_1,t_2,t_3)=\sum_{k_1+k_2+k_3=n}\prod_{i=1}^{3}\frac{a_{k_i}\Gamma(k_i+1/2)}{t_i^{k_i}},
\end{equation}
distinguish points from the lines of the form $t_1+t_2+t_3=c$ for any $c\in\R$ for the set of full measure $\mu^{\otimes 3}$ for any continuous measure $\mu \in \mathcal{P}(\R)$.
\end{lm}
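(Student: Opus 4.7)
The plan is to reduce the problem to the inductive recovery scheme for elementary symmetric polynomials developed in Section~\ref{se:ws}, applied in the case $d=3$. First, I would substitute $x_i := 1/t_i$ (legitimate $\mu^{\otimes 3}$-a.e.\ because continuity of $\mu$ forces $\{t_i = 0\}$ to have measure zero) and set $b_k := a_k\,\Gamma(k+1/2)$, so that
\[
c_n(t_1,t_2,t_3) = \sum_{k_1+k_2+k_3=n} b_{k_1}b_{k_2}b_{k_3}\,x_1^{k_1}x_2^{k_2}x_3^{k_3},
\]
which is exactly the system~\eqref{eq:uklrow} for $d=3$. The abstract procedure from Section~\ref{se:ws} shows that the values of $c_0,c_1,c_2,c_3$ determine the elementary symmetric polynomials $m_1,m_{1,1},m_{1,1,1}$ in $x_1,x_2,x_3$, provided the three non-degeneracy conditions
\[
b_0,b_1 \neq 0,\qquad b_2 \neq \frac{b_1^2}{b_0},\qquad b_3 \neq \frac{3b_0 b_1 b_2 - b_1^3}{3b_0^2}
\]
all hold. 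Once $m_1,m_{1,1},m_{1,1,1}$ are known, $\{x_1,x_2,x_3\}$ is determined as the (unordered) set of roots of $t^3 - m_1 t^2 + m_{1,1} t - m_{1,1,1}$, and hence $\{t_1,t_2,t_3\}=\{1/x_1,1/x_2,1/x_3\}$ is determined up to permutation, as required. The fixed-sum constraint $t_1+t_2+t_3 = c$ plays no role in this argument, which is precisely why the statement holds on any full-measure set cut out by the $\sigma^{\ast 3}$-line foliation.

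The only concrete input is the verification of the three non-degeneracy conditions for the specific Taylor coefficients $a_0 = \tfrac{\sqrt{2}}{2}$, $a_1 = -\tfrac{\sqrt{2}}{4}$, $a_2 = \tfrac{\sqrt{2}}{48}$, $a_3 = \tfrac{\sqrt{2}}{96}$ of $v(x)=\sqrt{x/(e^{2x}-1)}$. Writing $s := \sqrt{\pi/2}$ and using $\Gamma(1/2)=\sqrt{\pi}$, $\Gamma(3/2)=\tfrac{\sqrt{\pi}}{2}$, $\Gamma(5/2)=\tfrac{3\sqrt{\pi}}{4}$, $\Gamma(7/2)=\tfrac{15\sqrt{\pi}}{8}$, one gets
\[
b_0 = s,\quad b_1 = -\tfrac{s}{4},\quad b_2 = \tfrac{s}{32},\quad b_3 = \tfrac{5s}{128},
\]
whence $b_1^2/b_0 = s/16 \neq s/32 = b_2$ and $(3b_0 b_1 b_2 - b_1^3)/(3b_0^2) = -s/384 \neq 5s/128 = b_3$, so both conditions are strict and all three steps of the inductive scheme succeed.

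I expect no real obstacle here beyond organising the reduction cleanly: the algebra of Section~\ref{se:ws} does the work, and the special form of the density $h$ enters only through the explicit first four Taylor coefficients of $v$, all of which are readily computed. The main point to flag carefully is that the non-degeneracy conditions for the scheme are conditions on the universal constants $b_k$ alone (the coefficient matrices in Steps 1--3 of Section~\ref{se:ws} do not involve the $t_i$), so once they are checked numerically the unique recovery of $m_1,m_{1,1},m_{1,1,1}$ from $c_1,c_2,c_3$ is automatic, and no further restriction on the $t_i$ beyond $t_i\neq 0$ is needed.
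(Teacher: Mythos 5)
Your reduction to the inductive scheme of Section~\ref{se:ws} breaks down at the very step you dismiss as a routine numerical check: for these particular coefficients the non-degeneracy condition at $n=2$ \emph{fails}, and this is exactly the situation the paper warns about at the end of Section~\ref{se:ws} (``some of the equations~\eqref{eq:uklrow} will not carry any additional information''). The threshold you used, $C_2(b_0,b_1)=b_1^2/b_0$, is inconsistent with the correct identity $m_1^2=m_2+2m_{1,1}$ (the identity ``$m_{1,1}+m_2=m_1^2$'' printed in Step~2 of Section~\ref{se:ws} is off by the factor $2$); the determinant of the $2\times 2$ system $\{m_2+2m_{1,1}=m_1^2,\ b_2b_0^2m_2+b_1^2b_0m_{1,1}=c_2\}$ vanishes precisely when $b_2=b_1^2/(2b_0)$. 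With $b_0=s$, $b_1=-s/4$ (where $s=\sqrt{\pi/2}$) this threshold is $s/32$, which is exactly $b_2$. Concretely, $c_2=\frac{\sqrt2}{128}\pi^{3/2}(m_1)^2$ is a function of $c_1$ alone, so $m_2$ and $m_{1,1}$ cannot be separated at this step, and your Step~3 (whose right-hand sides need $m_2m_1$) cannot be run either. One can check directly that the proposed use of $c_1,c_2,c_3$ is not just unverified but insufficient: together they determine only $e_1=m_1$ and $e_1e_2-e_3=m_{2,1}+2m_{1,1,1}$ (since $m_3=e_1^3-3(e_1e_2-e_3)$ carries no extra information), which leaves a one-parameter family of unordered triples $\{x_1,x_2,x_3\}$.

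The paper's proof is forced to go further: $c_4$ is likewise degenerate, and the argument uses $c_5$. From $c_1$ and $c_3$ one recovers $m_1$, $m_3$ and $m_{2,1}+2m_{1,1,1}=(x_1+x_2)(x_1+x_3)(x_2+x_3)$; then, comparing $c_5$ with $(m_1)^5$, the combination factors as $-32\,(x_1+x_2)(x_1+x_3)(x_2+x_3)(63m_2+62m_{1,1})$, so after dividing by the first factor (nonzero $\mu^{\otimes3}$-a.e.\ by continuity of $\mu$, which is where the continuity hypothesis genuinely enters beyond $t_i\neq0$) one obtains $63m_2+62m_{1,1}$, and together with $m_1^2=m_2+2m_{1,1}$ this yields $m_2$; the power sums $m_1,m_2,m_3$ then determine $\{x_1,x_2,x_3\}$. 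So the missing idea in your proposal is precisely this escape route when the generic linear-independence scheme degenerates; as written, your argument would not prove the lemma.
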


\begin{proof}
\footnote{Maple 9.5. was used to make the necessary calculations. The source code is included in Section~\ref{se:maple}.}
Fix $c\in R$ and take $t_1,t_2,t_3\in\R\setminus\{0\}$ such that $t_1+t_2+t_3=c$. Let $x_i=1/t_i$ for $1\leq i\leq 3$.

The expressions $c_n$ for $1\leq n\leq 5$ given by the formula~\eqref{eq:cn} are (using the notation from Section~\ref{se:ws}) of the following form:
\begin{align*}
c_1=&a_1\cdot a_0^2 \cdot \Gamma(3/2)\cdot (\Gamma(1/2))^2 \cdot m_1=-\frac{\sqrt{2}}{16}\pi^{3/2}\cdot m_1,\\
c_2=&\frac{\sqrt{2}}{128}\pi^{3/2}\cdot m_2 + \frac{\sqrt{2}}{64}\pi^{3/2}\cdot m_{1,1}=\frac{\sqrt{2}}{128}\pi^{3/2}\left(m_1 \right)^2,\\
c_3=&\frac{5\sqrt{2}}{512}\pi^{3/2}\cdot m_3-\frac{\sqrt{2}}{512}\pi^{3/2}\cdot m_{2,1}-\frac{\sqrt{2}}{256}\pi^{3/2}\cdot m_{1,1,1},\\
c_4=&\frac{\sqrt{2}}{2048}\pi^{3/2}\cdot m_{2,1,1}+ \frac{\sqrt{2}}{4096}\pi^{3/2}\cdot m_{2,2}\\
&-\frac{5\sqrt{2}}{2048}\cdot m_{3,1}-\frac{21\sqrt{2}}{8192}\pi^{3/2}\cdot m_4,\\
c_5=&-\frac{\sqrt{2}}{16384}\cdot \pi^{3/2}\cdot m_{2,2,1}+\frac{5\sqrt{2}}{8192}\pi^{3/2}\cdot m_{3,1,1}\\
&+\frac{5\sqrt{2}}{16384}\pi^{3/2}\cdot m_{3,2}+\frac{21\sqrt{2}}{32768}\pi^{3/2}\cdot m_{4,1}\\
&-\frac{399\sqrt{2}}{32768}\pi^{3/2}\cdot m_5.
\end{align*}
Notice that the value of $c_2$ doesn't not give us any new information when we know $c_1$. The same applies to $c_4$ (the calculations needed to check this are slightly longer).

If we know the values of $c_1,c_3$ and $c_5$, we also know the values of the following expressions:
\begin{align}
& m_1= -\frac{16}{\sqrt{2}\cdot \pi^{3/2}}c_1,\label{d}\\
& 5m_3-m_{2,1}-2m_{1,1,1}=\frac{512}{\sqrt{2}\cdot\pi^{3/2}}c_3,\label{dd}\\
& -399m_5+21m_{4,1}+10m_{3,2}+20m_{3,1,1}-2m_{2,2,1}=\frac{32768}{\sqrt{2}\cdot \pi^{3/2}}c_5.\label{ddd}
\end{align}
Moreover (directly by~\eqref{d}), we know the values of
\begin{align}
(m_1)^2&=m_2+2m_{1,1},\label{d2}\\
(m_1)^3&=m_3+3m_{2,1}+6m_{1,1,1},\label{dddd}\\
(m_1)^5&=m_5+5m_{4,1}+10m_{3,2}+20m_{3,1,1}+30m_{2,2,1}.\label{ddddd}
\end{align}
Using~\eqref{dd} and~\eqref{dddd}, we calculate
$$
 m_3\text{ and }m_{2,1}+2m_{1,1,1}.
$$
Similarly,~\eqref{ddd} and~\eqref{ddddd} give us the value of
\begin{equation*}
-\frac{32768}{\sqrt{2}\cdot \pi^{3/2}}c_5 - 399 (m_1)^5=-32 (m_{2,1}+2m_{1,1,1})(63m_2+62m_{1,1}).
\end{equation*}
Since $m_{2,1}+2m_{1,1,1}=(x_1+x_2)(x_1+x_3)(x_2+x_3)$, and the measure $\mu$ is continuous, without loss of generality we may assume that $m_{2,1}+2m_{1,1,1}\neq 0$. Therefore we know the value of
\begin{equation}\label{d3}
63m_2+62m_{1,1}.
\end{equation}
Using~\eqref{d2} and~\eqref{d3} we can therefore calculate $m_2$. The values of $m_1,m_2$ and $m_3$ determine uniquely the set $\{x_1,x_2,x_3\}$, which ends the proof.
\end{proof}

\subsubsection{Proof of the main theorem}
\begin{proof}[Proof of Theorem~\ref{tw:5.19}]
We will use Proposition~\ref{tw:nowametoda} (see page~\pageref{tw:nowametoda}). We will show that all its assumptions for $d=3$ are fulfilled. We check that:
\begin{itemize}
\item
The considered flow is weakly mixing~\cite{MR2063628}. 
\item
As the measure $P$ appearing in the assumptions of Proposition~\ref{tw:nowametoda}, by Lemma~\ref{lm:lemma5} (page~\pageref{lm:lemma5}) and Proposition~\ref{pr:l+p} (page~\pageref{pr:l+p}) we can take the measure $\nu$ given by the formula~\eqref{eq:miaranu}, i.e.
\begin{equation*}
\nu=\left(\ln\left(\frac{1}{2\sin \pi \cdot} \right)\right)_{\ast}(\lambda)
\end{equation*}
(the assumptions of Proposition~\ref{pr:l+p} are fulfilled, in particular, the boundedness of the sequence $\left(f_0^{(q_n)}\right)_{n\in\N}$ is connected with the absence of mixing, see~\cite{Kochergin76, MR2336903}).
\item
The density $h$ of the measure $\nu$ satisfies the conditions $(i)-(iii)$ of Proposition~\ref{tw:nowametoda} by Lemma~\ref{lm:lemma7} and Lemma~\ref{lm:funk} (page~\pageref{lm:funk}). Moreover, the condition $(iv)$ is also satisfied by Lemma~\ref{lm:wlasnoscw} (page~\pageref{lm:wlasnoscw}).
\item
The last assumption on the distinguishing points from lines of the form ine $x_1+\dots+x_n=c$ is also satisfied by Lemma~\ref{lm:kod} (page~\pageref{lm:kod}).
\end{itemize}
\end{proof}

\subsubsection{Appendix: Source code in Maple 9.5 used for needed expressions in Lemma~\ref{lm:kod}}\label{se:maple}
\footnotesize
\begin{verbatim}
> restart:
> with(combinat):
\end{verbatim}
\begin{verbatim}
> for i from 0 to 20 do 
>   a[i]:=coeff(convert(series(sqrt(x/(exp(2*x)-1)),x=0,21),polynom),x,i):
>   b[i]:=a[i]*GAMMA(i+1/2): 
> od:
> 
> SUM:=proc(set)
> local Su,j:
> Su:=0;
> for j from 1 to nops(set) do Su:=Su+set[j] od:
> end proc:
> 
> POLY:=proc(numb,List)::polynom: 
> local w,resul,R,i,Se,V,sum,j,counter,u:
> Se:={}:
> if nops(List)<=numb then
>   for i from 1 to numb do Se:=Se union {x[i]} od:
> end if:
> R:=choose(Se,nops(List));
> for i from 1 to nops(R) do
>   V:=permute(R[i]):
>   sum[i]:={};
>   for j from 1 to nops(V) do
>     counter:=1:
>     for u from 1 to nops(V[j]) do
>       counter:=counter*(V[j][u]^List[u]):
>     od:
>     sum[i]:=sum[i] union {counter}:
>     w[i]:=SUM(sum[i]):    
>   od:
> od:
> resul:=0;
> for i from 1 to nops(R) do resul:=resul+w[i]:od:
> end proc:
> 
> GEN:=proc(p,n)
> local i,j,RESULT;global K,C:
> K:=[]:for i from 1 to nops(partition(n)) do 
>     if nops(partition(n)[i])<=p then K:=[op(K),partition(n)[i]] 
>     fi;
> od;
> K;
> for i from 1 to nops(K) do
>   C[i]:=b[0]^(p-nops(K[i])):
>   for j from 1 to nops(K[i]) do C[i]:=C[i]*b[K[i][j]]:od:
> od:
> RESULT:=0;
> for i from 1 to nops(K) do
>   RESULT:=RESULT+C[i]*POLY(p,K[i]):
>   od:
> end proc:
> 
> B[1]:=GEN(3,1);
\end{verbatim}
\begin{equation*}
B[1]:=-\frac{1}{16}\pi^{3/2}\sqrt{2} (x_1+x_2+x_3)
\end{equation*}
\begin{verbatim}
> B[2]:=GEN(3,2);
\end{verbatim}
\begin{equation*}
B[2]:=\frac{1}{64}\pi^{3/2}\sqrt{2}(x_1x_2+x_1x_3+x_2x_3)+\frac{1}{128}\pi^{3/2}\sqrt{2}(x_1^2+x_2^2+x_3^2)
\end{equation*}
\begin{verbatim}
> B[3]:=GEN(3,3);
\end{verbatim}
\begin{align*}
B[3] :=& -\frac{1}{256}\pi^{3/2}\sqrt{2}x_1x_2x_3-\frac{1}{512}\pi^{3/2}\sqrt{2}(x_1x_2^2+x_2x_1^2\\
+&x_1x_3^2+x_3x_1^2+x_2x_3^2+x_3x_2^2)+\frac{5}{512}\pi^{3/2}\sqrt{2}(x_1^3+x_2^3+x_3^3)
\end{align*}
\begin{verbatim}
> B[4]:=GEN(3,4);
\end{verbatim}
\begin{align*}
B[4]:=&\frac{1}{2048}\pi^{3/2}\sqrt{2}(x_1x_2^2x_3+x_2^2x_3x_1+x_1^2x_3x_2)+\frac{1}{4096}\pi^{3/2}(x_1^2x_2^2+x_1^2x_3^2+x_2^2x_3^2)\\
-&\frac{5}{2048}\pi^{3/2}\sqrt{2}(x_1^3x_2+x_2^3x_1+x_1^3x_3+x_3^3x_1+x_2^3x_3+x_3^3x_2)\\
-&\frac{21}{8192}\pi^{3/2}\sqrt{2}(x_1^4+x_2^4+x_3^4)
\end{align*}
\begin{verbatim}
> B[5]:=GEN(3,5);
\end{verbatim}
\begin{align*}
B[5]:=&-\frac{1}{16384}\pi^{3/2}\sqrt{2}(x_3x_1^2x_2^2+x_1x_2^2x_3^2+x_2x_1^2x_3^2)\\
+&\frac{5}{8192}\pi^{3/2}\sqrt{2}(x_2x_3x_1^3+x_1x_3x_2^3+x_1x_2x_3^3)\\
+&\frac{5}{16384}\pi^{3/2}\sqrt{2}(x_1^2x_2^3+x_2^2x_1^3+x_1^2x_3^3+x_3^2x_1^3+x_2^2x_3^3+x_3^2x_2^3)\\
+&\frac{21}{32768}\pi^{3/2}\sqrt{2}(x_1x_2^4+x_2x_1^4+x_1x_3^4+x_3x_1^4+x_2x_3^4+x_3x_2^4)\\
-&\frac{399}{32768}\pi^{3/2}\sqrt{2}(x_1^5+x_2^5+x_3^5)
\end{align*}
\begin{verbatim}
> expand(POLY(3,[1])^2);
\end{verbatim}
\begin{equation*}
x_1^2+2x_1x_2+2x_1x_3+x_2^2+2x_2x_3+x_3^2
\end{equation*}
\begin{verbatim}
> expand(POLY(3,[1])^3);
\end{verbatim}
\begin{equation*}
x_1^3+3x_2x_1^2+3x_3x_1^2+3x_1x_2^2+6x_1x_2x_3+3x_1x_3^2+x_2^3+3x_3x_2^2+3x_2x_3^2+x_3^3
\end{equation*}
\begin{verbatim}
> expand(POLY(3,[1])^5);
\end{verbatim}
\begin{multline*}
20x_2x_3x_1^3+20x_1x_3x_2^3+30x_3x_1^2x_2^2+20x_1x_2x_3^3+30x_1x_2^2x_3^2+30x_2x_1^2x_3^2\\
+10x_1^2x_2^3+10x_2^2x_1^3+10x_1^2x_3^3+10x_3^2x_1^3+10x_2^2x_3^3+10x_3^2x_2^3\\
+5x_1x_2^4+5x_2x_1^4+5x_1x_3^4+5x_3x_1^4+5x_2x_3^4+5x_3x_2^4+x_1^5+x_2^5+x_3^5
\end{multline*}
\begin{verbatim}
> factor((-(32768)/(sqrt(2)Pi^(3/2)))B[5]-399*POLY(3,[1])^5);
\end{verbatim}
\begin{equation*}
-32(x_2+x_3)(x_1+x_3)(x_1+x_2)(63x_1^2+62x_1x_2+62x_1x_3+63x_2^2+62x_2x_3+63x_3^2)
\end{equation*}
\begin{verbatim}
> factor(POLY(3,[2,1])+2*POLY(3,[1,1,1]));
\end{verbatim}
\begin{equation*}
              (x_2 + x_3) (x_1 + x_3) (x_1 + x_2)
\end{equation*}
\normalsize

\footnotesize{
\bibliography{cala.bib}
}
\normalsize
\end{document}